\pdfoutput=1
\documentclass[11pt]{article}

\usepackage[wide]{preprint-layout}
\usepackage{txfonts}
\usepackage[hidelinks]{hyperref}

\graphicspath{{figures/}}
\SetSymbolFont{stmry}{bold}{U}{stmry}{m}{n}

\newcommand{\bm}[1]{\boldsymbol{#1}}

\newcommand{\bfn}{{\boldsymbol n}}
\newcommand{\bfI}{{\boldsymbol I}}

\newcommand{\dO}{\,\mathrm{d}\Omega}

\newcommand{\bfeps}{\boldsymbol{\varepsilon}}

\newcommand{\onD}{\qquad \text{on }\Gamma^{\mathrm D}}
\newcommand{\onN}{\qquad \text{on }\Gamma^{\mathrm N}}
\newcommand{\jump}[1]{\llbracket #1\rrbracket}
\newcommand{\subrm}[1]{_{\mathrm{#1}}}
\newcommand{\suprm}[1]{^{\mathrm{#1}}}

\title{Structure-Preserving Augmented Lagrangian\\
Finite Element Methods for Cavitation\\
in Reynolds and Stokes Flows}
\author{Peter Hansbo \qquad Mats G. Larson}
\date{}

\begin{document}

\maketitle

\begin{abstract}
In this paper we propose augmented Lagrangian finite element methods for
cavitation in the Reynolds and Stokes models of lubrication, for which the
discrete complementarity conditions hold exactly and the augmentation
parameter drops out of the method. For the Reynolds equation this follows
from nodal quadrature in a mixed piecewise linear method, for which we prove
the classical first-order error estimate. We also determine the computable
stability threshold of a multiplier-free stabilised alternative. For Stokes
flow we show that the constrained scalar is the mechanical pressure when the
deviatoric stress, with zero bulk viscosity, is used, but not with the
customary incompressible stress. We discretise with a jump-stabilised
Crouzeix--Raviart element and piecewise constant pressure, and prove
well-posedness, stability in two and three dimensions, and a first-order
error estimate. We present numerical examples which verify these properties and show
that close pressure profiles in the two models do not imply close cavity
predictions, the latter also being sensitive to the end conditions of the
computational domain.

\end{abstract}

\noindent\textbf{Keywords:} augmented Lagrangian; cavitation;
complementarity; finite element method; Reynolds equation; Stokes flow


\section{Introduction}
\label{sec:introduction}

Hydrodynamic lubrication separates moving surfaces by a thin fluid film and
thereby reduces friction and wear. When the local pressure falls to the vapour
pressure, however, the film cavitates and the pressure is constrained from
below. The resulting free boundary is especially important for textured
surfaces: shallow textures can be described by the Reynolds equation, whereas
deep pits and grooves may contain recirculating flow for which the thin-film
approximation is no longer reliable \cite{BaCh86,vOdVe03}. A useful numerical
framework should therefore handle the pressure constraint in both Reynolds
and Stokes models while preserving its mechanical interpretation.

The Reynolds cavitation problem has long been treated as a variational inequality,
beginning with the analysis in \cite{Ci77,CaCi83}. We use the Swift--Stieber
condition, which imposes a nonnegative-pressure constraint and yields a
compact obstacle problem. This model provides a direct setting in which to
study the complementarity structure shared by the Reynolds and Stokes
descriptions. Mass-conserving Elrod--Adams models instead introduce a
saturation variable \cite{El81}; their analysis and discretisation have been
developed in, among others, \cite{BeDu89,GiFoDiSt10,Tb17,AuJaBu09}. The
extension of the present framework to mass-conserving cavitation is discussed
in Section~\ref{sec:conclusions}.

Variational formulations of pressure-constrained Stokes cavitation were
introduced by Nilsson and Hansbo \cite{NiHa11}, who also developed an adaptive
finite element method and compared its predictions with Reynolds lubrication.
Gimbel, Hansbo, and Suttmeier \cite{GiHaSu10} analysed an adaptive low-order
scheme, and the augmented Lagrangian framework of \cite{BuHaLa23} treated
Stokes cavitation with a full-gradient viscous form and a Taylor--Hood
discretisation. The present work deals with two linked structural issues: the constitutive stress needed
for the constrained scalar to represent mechanical pressure, and the discrete
velocity--pressure compatibility needed to enforce complementarity exactly.

We address these questions within the augmented Lagrangian treatment of
inequality constraints developed in \cite{ChHi13,BuHaLaSt17,BuHaLa23}, with
related Reynolds formulations given in
\cite{GuStVi17,GuRaStVi18,Gu24,GrPfCo24}. The central discrete mechanism is
compatibility between the constraint and the finite element spaces: nodal
quadrature makes the Reynolds multiplier equation pointwise, and the
Crouzeix--Raviart/piecewise-constant pair makes the Stokes multiplier
equation elementwise.

\paragraph{Contributions.}
\begin{itemize}
\item We show that the constrained scalar is the mechanical pressure for the
deviatoric Newtonian law with zero bulk viscosity, and that under the
customary incompressible stress a cavitated point can be placed under
tension.
\item For the stabilised Crouzeix--Raviart/piecewise-constant pair the
multiplier residual is elementwise constant, so complementarity holds
exactly and the augmentation parameter drops out; the identity
$\nabla\cdot\vec{V}^h=Q^h$ gives nondegenerate pressure coupling.
\item We prove continuous and discrete well-posedness, stability,
consistency and a first-order a priori error estimate for the deviatoric
formulation with Dirichlet, traction, symmetry and pressure-normal-flow
boundary conditions. In three dimensions the stability proof uses a
boundary-localised piecewise linear specialisation of the discrete
trace-free Korn framework of Williams and Hong \cite{WiHo24}.
\item For the Reynolds problem we prove discrete solvability and the
classical energy-norm estimate for a nodal mixed method, and compare it
with a multiplier-free stabilised method whose stability threshold is
computable.
\item Numerically, we compare constitutive laws and discretisations and
show that the Reynolds and Stokes cavity predictions depend strongly on
the end conditions. With end conditions calibrated against flat Couette
flow, the reformation fronts of the two models lie within a few tenths of
a gap height of each other for all tested pits.
\end{itemize}

\paragraph{Outline.}
Section~\ref{sec:reynolds-cavitation} develops the Reynolds formulations and
their analysis. Section~\ref{sec:stokes-cavitation} treats the constitutive,
continuous, and discrete Stokes questions. Section~\ref{sec:nonlinear-solvers}
describes the nonlinear solution strategies, and
Section~\ref{sec:numerical-results} presents verification and model
comparisons. Concluding remarks are given in Section~\ref{sec:conclusions}; the
broken trace-free Korn inequality is proved in
Appendix~\ref{sec:broken-korn-appendix}.


\section{Reynolds Cavitation}
\label{sec:reynolds-cavitation}

In this section we write the Reynolds cavitation problem in augmented
Lagrangian form, analyse a multiplier-free stabilised discretisation and a
nodal mixed one, and prove an error estimate for the latter.

\subsection{Augmented Formulation and Well-Posedness}
\label{subsec:reynolds-augmented}
Consider a thin lubricant with viscosity $\mu$ enclosed between two surfaces $\Gamma_1$ and $\Gamma_2$ in relative motion. We take $\Gamma_1$ to be the stationary surface that carries the shape, the pitted workpiece, and $\Gamma_2$ the plane above it, sliding with velocity $\bm{v} = (V,0,0)$,
the film occupying the gap of thickness $H(x,y)$ between them.

The Reynolds equation can then be written as
\begin{align}
-\nabla\cdot\left(H^3\nabla p\right)={}& -6\mu V \frac{\partial H}{\partial x}
\quad\text{in }\Omega
\label{eq:reynolds-strong}\\
p={}& 0\quad\text{on }\partial\Omega
\label{eq:reynolds-boundary}
\end{align}
where $H(x, y)$ is the local thickness of the lubricant film, and $p$ is the pressure. For the physical reasoning behind this model, see, e.g., \cite{CaCi83,NiHa11b}. We measure pressure relative to a constant cavitation threshold. In these
examples the ambient boundary pressure is identified with that threshold,
an explicit modelling approximation, so both are represented by zero.

The pressure is constrained from below by this threshold, so an additional
condition is $p\geq 0$ in \eqref{eq:reynolds-strong}. In order to incorporate this
condition into the model, we can introduce a Lagrange multiplier $\lambda$
as follows. Let $c$ be a typical thickness of the film and set
\begin{equation}
P := \frac{pc^2}{6\mu V},\quad d:=\frac{H}{c}, \quad f:= -\frac{\partial d}{\partial x}
\label{eq:reynolds-scaling}
\end{equation}

Then we seek $P$ and $\lambda$ such that
\begin{align}
-\nabla\cdot\left(d^3\nabla P\right)-\lambda ={}& f
\quad\text{in }\Omega
\label{eq:reynolds-kkt-equation}\\
P ={}& 0\quad\text{on }\partial\Omega
\label{eq:reynolds-kkt-boundary}\\
P \geq {}& 0\quad\text{in }\Omega
\label{eq:reynolds-pressure-bound}\\
\lambda \geq {}& 0\quad\text{in }\Omega
\label{eq:reynolds-multiplier-bound}\\
\lambda P =  {}& 0\quad\text{in }\Omega
\label{eq:reynolds-complementarity}
\end{align}
where the last three conditions are the well known Kuhn--Tucker conditions. We now follow \cite{ChHi13,BuHaLaSt17,BuHaLa23} and rewrite the Kuhn--Tucker conditions as
\begin{equation}\label{eq:reynolds-complementarity-map}
\lambda = \frac{1}{\gamma}\left[\gamma\lambda - P\right]_+
\end{equation}
where $[x]_+ = \max(0, x)$ and $\gamma\in {\mathbb{R}}^+$ but otherwise
arbitrary.

The equivalence of \eqref{eq:reynolds-complementarity-map} with the
Kuhn--Tucker conditions rests on the following elementary observation.

\begin{lem}\label{lem:complementarity-map}
Let $\gamma>0$ and $a,b\in{\mathbb{R}}$. Then
\begin{equation}
a = \frac{1}{\gamma}\left[\gamma a - b\right]_+
\qquad\Longleftrightarrow\qquad
a\geq 0,\quad b\geq 0,\quad ab = 0
\label{eq:scalar-complementarity-equivalence}
\end{equation}
\end{lem}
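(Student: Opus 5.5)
We prove the two implications separately, using only the definition $[x]_+=\max(0,x)$ and the positivity of $\gamma$. Multiplying the left-hand identity by $\gamma>0$ gives the equivalent statement $\gamma a = [\gamma a - b]_+$, and we work with this form throughout.

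For the forward direction, assume $\gamma a = [\gamma a - b]_+$. Since the positive part is nonnegative, $\gamma a\geq 0$, and hence $a\geq 0$. We then split on the sign of $\gamma a - b$.
\begin{itemize}
\item If $\gamma a - b\leq 0$, the right-hand side vanishes. Then $a=0$, so $ab=0$, and the case assumption gives $b\geq \gamma a = 0$.
\item If $\gamma a - b>0$, the identity reads $\gamma a = \gamma a - b$, so $b=0$. Then $ab=0$, and $a\geq 0$ was already established.
\end{itemize}
In both cases $a\geq0$, $b\geq 0$ and $ab=0$.

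For the reverse direction, assume $a\geq 0$, $b\geq 0$ and $ab=0$. Then $a=0$ or $b=0$.
\begin{itemize}
\item If $b=0$, then $[\gamma a - b]_+=[\gamma a]_+=\gamma a$ because $a\geq 0$.
\item If $a=0$, then $[\gamma a-b]_+=[-b]_+=0=\gamma a$ because $b\geq 0$.
\end{itemize}
Either way $\gamma a=[\gamma a-b]_+$, and dividing by $\gamma$ recovers the left-hand side of \eqref{eq:scalar-complementarity-equivalence}.

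There is no real obstacle here. The only points needing care are that dividing by $\gamma$ requires $\gamma>0$, and that the forward direction must first extract $a\geq 0$ from the nonnegativity of the positive part before splitting into cases. Applying the lemma pointwise with $a=\lambda(x)$ and $b=P(x)$ then shows that \eqref{eq:reynolds-complementarity-map} is equivalent to \eqref{eq:reynolds-pressure-bound}--\eqref{eq:reynolds-complementarity}, independently of $\gamma$.
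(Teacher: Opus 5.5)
Your proof is correct and is essentially the paper's argument: an elementary case analysis using only $[x]_+\geq 0$ and $\gamma>0$. The paper organises the cases by the sign of $a$ ($a<0$ is impossible, $a>0$ holds iff $b=0$, and $a=0$ holds iff $b\geq 0$), which gives both implications at once instead of treating the two directions separately as you do.
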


\begin{proof}
The right-hand side is nonnegative, so $a<0$ is impossible. For $a>0$ the
identity holds if and only if $\gamma a-b=\gamma a$, that is, $b=0$, and for
$a=0$ it holds if and only if $[-b]_+=0$, that is, $b\geq 0$.
\end{proof}

Now, following the development for the obstacle problem in \cite{BuHaLaSt17}, we introduce
the augmented Lagrangian problem of finding $(P,\lambda)$ that are stationary points to the functional
\begin{equation}
\begin{aligned}
\mathfrak{F}(P,\lambda) := {}& \frac12\int_{\Omega} d^3\vert\nabla P\vert^2\dO
- \int_{\Omega} f P \dO \\
&+ \int_{\Omega}\frac{1}{2\gamma}\left[\gamma\lambda-P\right]_+^2\dO
-\frac12\int_{\Omega} \gamma \lambda^2 \dO
\end{aligned}
\label{eq:reynolds-augmented-functional}
\end{equation}
leading to seeking $(P,\lambda)\in H_0^1(\Omega)\times L_2(\Omega)$ such that
\begin{equation}
 \int_{\Omega}d^3 \nabla P\cdot\nabla q\dO - \int_{\Omega}\frac{1}{\gamma}\left[\gamma\lambda-P\right]_+ q\dO =\int_{\Omega} f q \dO\quad\forall q\in H_0^1(\Omega)
\label{eq:reynolds-augmented-primal}\end{equation}
and
\begin{equation}
\int_{\Omega}\frac{1}{\gamma}\left[\gamma\lambda-P\right]_+ \eta\dO -\int_{\Omega} \lambda \eta  \dO=0\quad\forall \eta\in L_2(\Omega)
\label{eq:reynolds-augmented-multiplier}\end{equation}

Because \eqref{eq:reynolds-augmented-multiplier} holds for every $\eta\in L_2(\Omega)$, its integrand
vanishes almost everywhere, so Lemma~\ref{lem:complementarity-map} applies pointwise and the
pair \eqref{eq:reynolds-augmented-primal}--\eqref{eq:reynolds-augmented-multiplier} is precisely the Kuhn--Tucker system.
This gives the following, in which
$K := \{v\in H^1_0(\Omega):\ v\geq 0 \text{ a.e.\ in }\Omega\}$ and
$a(P,q) := (d^3\nabla P,\nabla q)_\Omega$.

\begin{thm}\label{thm:reynolds-well-posedness}
Let $d^3\in L_\infty(\Omega)$ with $d^3\geq d_0^3>0$ and let
$f\in L_2(\Omega)$. Then the obstacle problem of finding $P\in K$ with
\begin{equation}\label{eq:reynolds-variational-inequality}
a(P,v-P)\geq (f,v-P)_\Omega\qquad\forall v\in K
\end{equation}
has exactly one solution. If moreover
$\lambda := -\nabla\cdot(d^3\nabla P)-f$ belongs to $L_2(\Omega)$, then for
every $\gamma>0$ the pair $(P,\lambda)$ solves
\eqref{eq:reynolds-augmented-primal}--\eqref{eq:reynolds-augmented-multiplier}, and conversely every solution of
\eqref{eq:reynolds-augmented-primal}--\eqref{eq:reynolds-augmented-multiplier} is of this form. In particular the
solution set of \eqref{eq:reynolds-augmented-primal}--\eqref{eq:reynolds-augmented-multiplier} does not depend on
$\gamma$.
\end{thm}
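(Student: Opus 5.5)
Existence and uniqueness for the variational inequality \eqref{eq:reynolds-variational-inequality} follow from the Lions--Stampacchia theorem. The set $K$ is a nonempty, closed, convex subset of $H^1_0(\Omega)$; closedness holds because $H^1_0$-convergence implies $L_2$-convergence and hence a.e.\ convergence of a subsequence, which preserves $v\geq 0$. The bilinear form $a$ is bounded since $d^3\in L_\infty$, and it is coercive, $a(v,v)\geq d_0^3\|\nabla v\|^2\geq c\|v\|_{H^1}^2$, by the Poincar\'e inequality. The functional $v\mapsto(f,v)_\Omega$ is bounded on $H^1_0$. Because $a$ is symmetric, one could alternatively minimise the strictly convex, coercive energy $\frac12a(v,v)-(f,v)$ over $K$.

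For the forward direction, assume $\lambda\in L_2(\Omega)$, where $\lambda$ is defined distributionally. Then $a(P,q)-(f,q)=(\lambda,q)$ first for $q\in C_c^\infty$, and by density for all $q\in H^1_0$, since both sides are continuous in $H^1_0$ once $\lambda\in L_2$. Taking $v=P+\phi$ with $\phi\in K$ gives $(\lambda,\phi)\geq 0$ for every $\phi\in K$. Since nonnegative $C_c^\infty$ functions lie in $K$ and are dense in the nonnegative cone of $L_2$ in the relevant sense, this yields $\lambda\geq0$ a.e. Taking $v=0$ and $v=2P$ gives $(\lambda,P)=0$. Because $\lambda\geq0$ and $P\geq0$, the integrand $\lambda P$ is nonnegative with zero integral, so $\lambda P=0$ a.e. Lemma~\ref{lem:complementarity-map}, applied pointwise a.e.\ with $a=\lambda$ and $b=P$, gives $\lambda=\gamma^{-1}[\gamma\lambda-P]_+$ for every $\gamma>0$. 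This is \eqref{eq:reynolds-augmented-multiplier}, and substituting it into the identity $a(P,q)-(f,q)=(\lambda,q)$ gives \eqref{eq:reynolds-augmented-primal}.

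For the converse, let $(P,\lambda)\in H^1_0\times L_2$ solve both equations for some $\gamma$. Since \eqref{eq:reynolds-augmented-multiplier} holds for all $\eta\in L_2$, we have $\lambda=\gamma^{-1}[\gamma\lambda-P]_+$ a.e. Lemma~\ref{lem:complementarity-map} then gives $\lambda\geq0$, $P\geq0$ and $\lambda P=0$ a.e., so in particular $P\in K$. Equation \eqref{eq:reynolds-augmented-primal} now reads $a(P,q)=(f+\lambda,q)$, so $\lambda=-\nabla\cdot(d^3\nabla P)-f$ in the distributional sense. For any $v\in K$,
\[
a(P,v-P)-(f,v-P)=(\lambda,v)-(\lambda,P)=(\lambda,v)\geq0,
\]
so $P$ solves \eqref{eq:reynolds-variational-inequality}. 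By uniqueness $P$ is the obstacle solution, and $\lambda$ is then determined by $P$. The solution set is therefore $\{(P,\lambda)\}$ or empty, independently of $\gamma$.

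The only delicate step is deducing $\lambda\geq0$ a.e.\ from $(\lambda,\phi)\geq0$ for all $\phi\in K$. I would argue with nonnegative test functions in $C_c^\infty$: for an $L_2$ (hence $L^1_{\rm loc}$) function, nonnegativity against all such test functions implies nonnegativity a.e., by mollifying indicator functions of sets where $\lambda<0$.
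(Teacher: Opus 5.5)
Your proposal is correct and follows essentially the same route as the paper: Lions--Stampacchia for existence and uniqueness, the test choices $v=P+\varphi$, $v=0$ and $v=2P$ to obtain $\lambda\geq 0$ and $\lambda P=0$, and Lemma~\ref{lem:complementarity-map} applied pointwise in both directions, with no step involving $\gamma$. You also spell out points the paper leaves implicit: closedness of $K$, the density extension of the equilibrium identity to all of $H^1_0(\Omega)$, the a.e.\ sign argument for $\lambda$, and the identification $\lambda=-\nabla\cdot(d^3\nabla P)-f$ in the converse.
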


\begin{proof}
The form $a(\cdot,\cdot)$ is bounded and, by the Poincar\'e inequality and
$d^3\geq d_0^3$, coercive on $H^1_0(\Omega)$, and $K$ is nonempty, closed
and convex, so existence and uniqueness is the theorem of Lions and
Stampacchia \cite{KiSt80}. If $(P,\lambda)$ solves
\eqref{eq:reynolds-augmented-primal}--\eqref{eq:reynolds-augmented-multiplier},
Lemma~\ref{lem:complementarity-map} gives $P\geq 0$, $\lambda\geq 0$ and
$\lambda P = 0$ almost everywhere, and \eqref{eq:reynolds-augmented-primal}
reduces to
\begin{equation}\label{eq:reynolds-kkt-equilibrium}
a(P,q)-(\lambda,q)_\Omega = (f,q)_\Omega\qquad\forall q\in H^1_0(\Omega)
\end{equation}
so that $a(P,v-P)-(f,v-P)_\Omega=(\lambda,v)_\Omega\geq 0$ for $v\in K$.
Conversely, if $P$ solves \eqref{eq:reynolds-variational-inequality} and
$\lambda := -\nabla\cdot(d^3\nabla P)-f\in L_2(\Omega)$, taking $v=P+\varphi$
with $\varphi\in K$ gives $\lambda\geq 0$, and $v=0$ and $v=2P$ give
$(\lambda,P)_\Omega=0$, hence $\lambda P=0$ almost everywhere;
Lemma~\ref{lem:complementarity-map} then returns
\eqref{eq:reynolds-augmented-multiplier}. Neither step involves $\gamma$.
\end{proof}

\begin{rem}
The requirement $\lambda\in L_2(\Omega)$ is a regularity assumption on the
obstacle problem. If $f\in L_2(\Omega)$ and $d^3\in W^{1,\infty}(\Omega)$
is uniformly positive, interior regularity gives
$\lambda\in L_{2,{\rm loc}}(\Omega)$; see \cite{KiSt80}. Global
$L_2$ regularity of $\lambda$, and the $H^2$ regularity used in
Theorem~\ref{thm:reynolds-error}, require a corresponding global obstacle
regularity result, including assumptions on the domain, coefficient, data,
and compatibility at the boundary of the contact set. We therefore state
$P\in H^2(\Omega)$ and $\lambda\in L_2(\Omega)$ explicitly whenever those
properties are used. They allow the multiplier to be sought in
$L_2(\Omega)$ rather than only in $H^{-1}(\Omega)$.
\end{rem}

\subsection{Multiplier-Free Stabilised Method}
\label{subsec:reynolds-stabilized}

For our discrete method, we assume that $\{\mathcal{T}_h\}_{h}$ is a
family of conforming shape-regular meshes on $\Omega$ consisting of
triangles; we write $\mathcal{T} = \{T\}$ for a generic member and
suppress the subscript.
Then we define $V_h$ as the space of $H^1$-conforming piecewise polynomial
functions
on $\mathcal{T}$, satisfying the homogeneous boundary condition on
$\partial\Omega$.
\begin{equation}
V_h := \{v_h \in H^1_0(\Omega): v_h\vert_T \in \mathbb{P}_k(T), \,
\forall T \in \mathcal{T} \},\quad \mbox{ for } k \ge 1
\label{eq:reynolds-finite-element-space}
\end{equation}
To obtain a discrete minimisation problem, without multiplier, we formally replace $\lambda$ elementwise by
$-\nabla\cdot d^3\nabla P_h - f$.  This carries its intended meaning only for
$k\geq 2$: on straight-sided triangles $\Delta v_h$ vanishes identically when
$k=1$, and the substitution then retains only the lower-order part
$3d^2\nabla d\cdot\nabla v_h$ of the residual.  Eliminating the multiplier in
this way is precisely the mechanism by which an augmented Lagrangian
formulation generates a stabilised method, cf.\ \cite{BuHaLa23}.  We thus
seek $P_h\in V_h$ such that
\begin{equation}
P_h = \arg\min_{v\in V_h} \mathfrak{F}_h(v)
\label{eq:reynolds-discrete-minimization}
\end{equation}
where
\begin{equation}
\begin{aligned}
\mathfrak{F}_h(v) := {}& \frac12\int_{\Omega} d^3\vert\nabla v\vert^2\dO
+ \sum_{T\in\mathcal{T}}\int_{T}\frac{1}{2\gamma}
\left[-v-\gamma(\nabla\cdot d^3\nabla v+f)\right]_+^2\dO \\
&-\frac12\sum_{T\in\mathcal{T}}\int_{T} \gamma
\left(\nabla\cdot d^3\nabla v + f\right)^2 \dO
-\int_{\Omega} f v \dO
\end{aligned}
\label{eq:reynolds-discrete-functional}
\end{equation}
The
Euler--Lagrange equations corresponding to \eqref{eq:reynolds-discrete-functional} take the form: Find $P_h \in V_h$ such that
\begin{equation}\label{eq:reynolds-stabilized-method}
a(P_h,v_h) + b(P_h,f;v_h) =(f,v_h)_\Omega \quad \forall v_h
\in V_h
\end{equation}
where $(\cdot,\cdot)_\Omega$ denotes the $L_2$ inner product,
$a(P_h,v_h) := (d^3\nabla P_h,\nabla v_h)_{\Omega}$ and
\begin{equation}
\begin{aligned}
b(P_h,f;v_h):={}& \left<
 \gamma^{-1}[-P_h- \gamma (\nabla\cdot d^3\nabla P_h + f)]_+,
 -v_h - \gamma \nabla\cdot d^3\nabla v_h\right>_{h} \\
 &{} - \left<\gamma \left(\nabla\cdot d^3\nabla P_h + f\right),
 \nabla\cdot d^3\nabla v_h\right>_{h}
\end{aligned}
\label{eq:reynolds-stabilized-form}
\end{equation}
where
\begin{equation}
\left<x_h,y_h\right>_{h} := \sum_{T\in\mathcal{T}} \int_T x_h y_h\,\mathrm{d}x
\label{eq:elementwise-inner-product}
\end{equation}
and, for use below,
\begin{equation}
\| x_h\|_h := \left<x_h,x_h\right>_{h}^{1/2}
\label{eq:elementwise-norm}
\end{equation}
To simplify the notation below we introduce $Q_\gamma(P_h) =
-\gamma \nabla\cdot d^3\nabla P_h-P_h $, in terms of which
\begin{equation}
b(P_h,f;v_h)=\left<
 \gamma^{-1}[Q_\gamma(P_h)- \gamma f]_+, Q_\gamma(v_h) \right>_{h} -   \left<\gamma \left(\nabla\cdot d^3\nabla P_h + f\right), \nabla\cdot d^3\nabla
 v_h\right>_{h}
\label{eq:reynolds-stabilized-q-form}
\end{equation}

In a primal--dual active set (semismooth Newton) iteration
\cite{HiItKu02} one freezes the sign of $Q_\gamma(P_h)-\gamma f$ at each
quadrature point and solves the resulting linear problem. Writing
$\mathcal{A}$ for the set on which the positive part is active and
$\left<\cdot,\cdot\right>_{\mathcal{A}}$ for the elementwise $L_2$ form
restricted to it, the second variation of $\mathfrak{F}_h$ with the sign so
frozen is the symmetric form
\begin{equation}\label{eq:reynolds-frozen-form}
\mathcal{B}_{\mathcal{A}}(v_h,v_h) := a(v_h,v_h)
+ \gamma^{-1}\left<Q_\gamma(v_h),Q_\gamma(v_h)\right>_{\mathcal{A}}
- \gamma\|\nabla\cdot d^3\nabla v_h\|_h^2
\end{equation}
Its last term carries a negative sign, and stability depends on whether it is dominated by
$a(\cdot,\cdot)$.

This was settled for a stabilised method of this kind, applied to the same
inequality constrained Reynolds equation, by Gustafsson, Rajagopal,
Stenberg and Videman \cite[Theorem 2]{GuRaStVi18}, whose condition on the
stabilisation parameter is the one below; we restate it in the notation
used here.

\begin{thm}[{cf.\ \cite[Theorem 2]{GuRaStVi18}}]\label{thm:reynolds-stability}
Let $d\in W^{1,\infty}(\Omega)$ with $d\geq d_0>0$, let $\mathcal{T}$ be
shape regular with $h_T\leq 1$, and let $\gamma\vert_T = \gamma_0h_T^2$.
Then there is a constant $C_I$, depending only on the shape regularity of
$\mathcal{T}$, on the polynomial degree $k$ and on
$\|d\|_{W^{1,\infty}(\Omega)}$, such that
\begin{equation}\label{eq:reynolds-frozen-coercivity}
\mathcal{B}_{\mathcal{A}}(v_h,v_h)\geq
\left(d_0^3-\gamma_0C_I^2\right)\|\nabla v_h\|^2_{\Omega}
\qquad\forall v_h\in V_h
\end{equation}
for every choice of $\mathcal{A}$. If $\gamma_0 < d_0^3C_I^{-2}$ then
$\mathfrak{F}_h$ is strictly convex on $V_h$, so \eqref{eq:reynolds-discrete-functional} has exactly
one solution, and every linear system arising in the active set iteration
is symmetric and positive definite, uniformly in $h$.
\end{thm}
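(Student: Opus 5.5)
The plan is to bound the frozen form $\mathcal{B}_{\mathcal{A}}$ from below term by term, using only that the middle term is nonnegative and controlling the negative last term by an elementwise inverse inequality.

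\textbf{Lower bound.} Since $\gamma>0$, the active-set term $\gamma^{-1}\left<Q_\gamma(v_h),Q_\gamma(v_h)\right>_{\mathcal{A}}$ is nonnegative for every $\mathcal{A}$, so
\begin{equation*}
\mathcal{B}_{\mathcal{A}}(v_h,v_h)\geq d_0^3\|\nabla v_h\|_\Omega^2-\sum_{T}\gamma_0h_T^2\|\nabla\cdot d^3\nabla v_h\|_T^2 .
\end{equation*}
This reduces the claim to an $\mathcal{A}$-independent estimate, which is why the constant does not depend on the active set.

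\textbf{Inverse estimate.} On each $T$ I expand
\begin{equation*}
\nabla\cdot d^3\nabla v_h = d^3\Delta v_h+3d^2\nabla d\cdot\nabla v_h .
\end{equation*}
For the first term, the standard polynomial inverse inequality $\|\Delta v_h\|_T\leq C h_T^{-1}\|\nabla v_h\|_T$ on shape-regular triangles, with the constant depending on $k$ and the shape regularity, gives
\begin{equation*}
h_T\|d^3\Delta v_h\|_T\leq C\|d\|_{L^\infty}^3\|\nabla v_h\|_T .
\end{equation*}
For the second term there is no inverse inequality to apply, but $h_T\leq1$ gives directly
\begin{equation*}
h_T\|3d^2\nabla d\cdot\nabla v_h\|_T\leq 3\|d\|_{W^{1,\infty}}^3\|\nabla v_h\|_T .
\end{equation*}
Combining the two yields $h_T\|\nabla\cdot d^3\nabla v_h\|_T\leq C_I\|\nabla v_h\|_T$, where $C_I$ depends only on shape regularity, $k$ and $\|d\|_{W^{1,\infty}}$. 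Squaring, multiplying by $\gamma_0$ and summing over $T$ gives \eqref{eq:reynolds-frozen-coercivity}.

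This lower-order term, together with the requirement that the constant not depend on $h$, is the only delicate point. It is exactly where the assumptions $h_T\leq1$ and $d\in W^{1,\infty}$ are used, and it is why I would not try to use a smaller $d$-dependent constant.

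\textbf{Convexity and the active-set systems.} The functional $\mathfrak{F}_h$ is $C^1$ and piecewise quadratic, since $[\cdot]_+^2$ is $C^1$ with piecewise constant second derivative. Its generalized Hessian in a direction $w_h$ is $\mathcal{B}_{\mathcal{A}}(w_h,w_h)$, with $\mathcal{A}$ determined by the point.

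To avoid generalized derivatives altogether, I would argue monotonicity directly. The map $s\mapsto\gamma^{-1}[s]_+$ is monotone, so
\begin{equation*}
\langle D\mathfrak{F}_h(v)-D\mathfrak{F}_h(w),v-w\rangle\geq a(e,e)-\gamma\|\nabla\cdot d^3\nabla e\|_h^2\geq(d_0^3-\gamma_0C_I^2)\|\nabla e\|^2,
\qquad e=v-w .
\end{equation*}
Hence if $\gamma_0<d_0^3C_I^{-2}$, then $\mathfrak{F}_h$ is strongly convex on the finite-dimensional space $V_h$.

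It is also continuous and coercive on $V_h$. This follows from the same bound and the Poincaré inequality, since the data terms are only linear in $v$. Therefore $\mathfrak{F}_h$ has exactly one minimiser, which is equivalently the unique solution of \eqref{eq:reynolds-stabilized-method}.

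Finally, each active-set linear system has $\mathcal{B}_{\mathcal{A}}$ as its matrix, which is symmetric by construction. It is positive definite with coercivity constant $d_0^3-\gamma_0C_I^2$ in the $H^1_0$ seminorm, independent of $h$ and of $\mathcal{A}$.
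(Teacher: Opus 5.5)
Your proof is correct and follows the paper's argument. It uses the same splitting $\nabla\cdot d^3\nabla v_h = d^3\Delta v_h+3d^2\nabla d\cdot\nabla v_h$, with a polynomial inverse estimate on the first term and $h_T\leq 1$ on the second, the nonnegativity of the active-set term, and $a(v_h,v_h)\geq d_0^3\|\nabla v_h\|_\Omega^2$. The only difference is in the convexity step: you prove strong monotonicity of $D\mathfrak{F}_h$, whereas the paper writes $\mathfrak{F}_h$ as a quadratic with form $\mathcal{B}_\emptyset$ plus the convex term $[\,\cdot\,]_+^2$ composed with an affine map; your version has the small advantage of making coercivity, and hence existence of the minimiser, explicit.
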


\begin{proof}
Write $Lv := \nabla\cdot(d^3\nabla v) = d^3\Delta v + 3d^2\nabla d\cdot\nabla v$.
The first term obeys the inverse estimate for polynomials and the second is
bounded outright, so that $h_T\|Lv_h\|_T\leq C_I\|\nabla v_h\|_T$ with
$C_I$ depending only on the shape regularity, on $k$ and on
$\|d\|_{W^{1,\infty}}$. Summing over the elements with
$\gamma\vert_T=\gamma_0h_T^2$ bounds the last term of \eqref{eq:reynolds-frozen-form} by
$\gamma_0C_I^2\|\nabla v_h\|^2_\Omega$, the middle term is nonnegative and
$a(v_h,v_h)\geq d_0^3\|\nabla v_h\|^2_\Omega$, which is \eqref{eq:reynolds-frozen-coercivity}. The
remaining term of $\mathfrak{F}_h$ is convex, being the composition of
$[\,\cdot\,]_+^2$ with an affine map, so under the stated bound $\mathfrak{F}_h$
is strictly convex.
\end{proof}

The method is moreover consistent. At the exact
solution $Q_\gamma(P)-\gamma f = \gamma\lambda-P$, so Lemma~\ref{lem:complementarity-map}
gives $\gamma^{-1}[Q_\gamma(P)-\gamma f]_+ = \lambda$, the two terms
carrying $\gamma$ in $b(\cdot,\cdot;\cdot)$ cancel identically, and
\eqref{eq:reynolds-stabilized-method} reduces to \eqref{eq:reynolds-kkt-equilibrium}. An a priori estimate follows;
\cite[Theorem 3]{GuRaStVi18} gives a best approximation result for the
pressure and the multiplier together, in a mesh-dependent norm and under
the same condition on the parameter, with the proof in \cite{GuStVi17}.

\begin{rem}\label{rem:reynolds-parameter-threshold}
The threshold of Theorem~\ref{thm:reynolds-stability} can be located exactly. Since the middle term of
\eqref{eq:reynolds-frozen-form} is nonnegative,
$\mathcal{B}_{\mathcal{A}}\geq\mathcal{B}_\emptyset$ for every
$\mathcal{A}$: the empty set, which is also the first pass of the
iteration, is the worst case, and a threshold computed for it serves for
the whole iteration. Now $\mathcal{B}_\emptyset$ is
positive definite if and only if $\gamma_0<1/\Lambda_{\max}$, where
\begin{equation}
\Lambda_{\max} := \sup_{v_h\in V_h\setminus\{0\}}\
\frac{\sum_{T\in\mathcal{T}}h_T^2\|\nabla\cdot d^3\nabla v_h\|_T^2}
{a(v_h,v_h)} \ \leq\ \frac{C_I^2}{d_0^3}
\label{eq:reynolds-threshold-eigenvalue}
\end{equation}
By \eqref{eq:reynolds-frozen-coercivity}, $\Lambda_{\max}$ is bounded
under refinement. Section~\ref{subsec:reynolds-stabilized-results} computes
it for $\mathbb{P}_2$ and shows that above the resulting threshold the
frozen systems are indefinite and the iteration does not converge. Thus $\gamma$, arbitrary in the continuous formulation by
Theorem~\ref{thm:reynolds-well-posedness}, is in
\eqref{eq:reynolds-discrete-functional} a local, mesh-dependent parameter
with a computable admissible range.
\end{rem}

\subsection{Structure-Preserving Nodal Mixed Method}
\label{subsec:reynolds-nodal-method}

The multiplier need not be eliminated. Returning to
\eqref{eq:reynolds-augmented-primal}--\eqref{eq:reynolds-augmented-multiplier}, we discretise them directly, taking
continuous piecewise linears for both $P$ and $\lambda$ and evaluating the
$L_2$ pairings in which $\lambda$ is tested by nodal quadrature,
$m_i = \int_\Omega \varphi_i\dO$. For $k=1$ this is not a simplification of
the constraint
since a piecewise linear
function attains its extrema at the nodes, so that $P_h\geq 0$ in $\Omega$
is exactly $P_i \geq 0$ at every node. Writing $\mathcal{A}$ for the set of
cavitated nodes and $\mathcal{I}$ for its complement, with ties in the
active-set test assigned to $\mathcal{A}$, the frozen system is
\begin{equation}\label{eq:reynolds-nodal-system}
\left[\begin{array}{cc} A + \gamma^{-1}D_{\mathcal{A}} & -D_{\mathcal{A}} \\
-D_{\mathcal{A}} & -\gamma D_{\mathcal{I}}\end{array}\right]
\left[\begin{array}{c} P \\ \lambda\end{array}\right] =
\left[\begin{array}{c} F \\ 0\end{array}\right]
\end{equation}
with $A$ the $d^3$-weighted stiffness matrix,
$D_{\mathcal{A}} = {\rm diag}(m_i\mathbf{1}_{i\in\mathcal{A}})$, and
$D_{\mathcal{I}} = {\rm diag}(m_i\mathbf{1}_{i\in\mathcal{I}})$. It is symmetric and
has the same block structure as the Stokes system \eqref{eq:stokes-discrete-momentum}--\eqref{eq:stokes-discrete-multiplier},
with $P$ in the role of $\nabla\cdot\bm{u}$ and $\lambda$ in that of $p$.

\begin{thm}\label{thm:reynolds-nodal-structure}
Let $V_h$ consist of continuous piecewise linears vanishing on
$\partial\Omega$ and let $A$ be the corresponding stiffness matrix, which
is symmetric and positive definite. Then, for every choice of
$\mathcal{A}$, the matrix in \eqref{eq:reynolds-nodal-system} is nonsingular. Its
solution satisfies $P_i = 0$ on $\mathcal{A}$ and $\lambda_i = 0$ off it,
so that the product $\lambda_iP_i$ vanishes at every node, for every
$\mathcal{A}$. The test argument $\gamma\lambda_i-P_i$ reduces to
$\gamma\lambda_i$ on $\mathcal{A}$ and to $-P_i$ on $\mathcal{I}$.
At a fixed point, membership therefore gives $\lambda_i\geq0$ on
$\mathcal{A}$ and $P_i>0$ on $\mathcal{I}$, because the test fails there.
Neither the frozen solutions nor the fixed point depend on $\gamma$, and
the fixed point satisfies the discrete Kuhn--Tucker conditions exactly.
Finally, the limit is the unique
minimiser of $\tfrac12a(v_h,v_h)-(f,v_h)_\Omega$ over
$K_h := \{v_h\in V_h:\ v_h\geq 0\ \text{in}\ \Omega\}$, and $K_h\subset K$.
\end{thm}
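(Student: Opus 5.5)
The plan is to work directly with the block system \eqref{eq:reynolds-nodal-system}, splitting the unknowns by node into $\mathcal{A}$ and $\mathcal{I}$. Throughout, $D_{\mathcal{A}}$ and $D_{\mathcal{I}}$ are diagonal with strictly positive masses $m_i$ on complementary index sets.

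\textbf{Step 1: nodal structure and nonsingularity.} The second block row reads $-D_{\mathcal{A}}P-\gamma D_{\mathcal{I}}\lambda=0$. Because the two diagonal matrices have disjoint supports, this decouples nodewise into $P_i=0$ for $i\in\mathcal{A}$ and $\lambda_i=0$ for $i\in\mathcal{I}$. Hence $\lambda_iP_i=0$ at every node, whatever $\mathcal{A}$ is. Substituting into the first row gives two facts:
\begin{itemize}
\item The $D_{\mathcal{A}}P$ term vanishes, so the rows indexed by $\mathcal{I}$ give $A_{\mathcal{I}\mathcal{I}}P_{\mathcal{I}}=F_{\mathcal{I}}$.
\item The rows indexed by $\mathcal{A}$ give $m_i\lambda_i=(AP)_i-F_i$.
\end{itemize}
The principal submatrix $A_{\mathcal{I}\mathcal{I}}$ of the positive definite $A$ is positive definite. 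So for any right-hand side the solution exists and is unique, which gives nonsingularity. For homogeneous data, the same elimination forces $P=0$ and then $\lambda=0$. None of these reduced equations contains $\gamma$, so the frozen solutions are independent of $\gamma$.

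\textbf{Step 2: the fixed point.} With $P_i=0$ on $\mathcal{A}$ and $\lambda_i=0$ on $\mathcal{I}$, the test quantity $\gamma\lambda_i-P_i$ equals $\gamma\lambda_i$ on $\mathcal{A}$ and $-P_i$ on $\mathcal{I}$. Ties are assigned to $\mathcal{A}$, so at a fixed point:
\begin{itemize}
\item membership in $\mathcal{A}$ means $\gamma\lambda_i\ge0$, that is, $\lambda_i\ge0$;
\item membership in $\mathcal{I}$ means $-P_i<0$, that is, $P_i>0$.
\end{itemize}
Since $\gamma>0$, these sign conditions do not depend on $\gamma$. The fixed-point set is therefore independent of $\gamma$. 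Together with Step 1, the nodal conditions $P_i\ge0$, $\lambda_i\ge0$, $\lambda_iP_i=0$ and $AP-D\lambda=F$ hold exactly, where $D=D_{\mathcal{A}}+D_{\mathcal{I}}$ is the lumped mass matrix. By Lemma~\ref{lem:complementarity-map}, applied nodewise, these are the discrete Kuhn--Tucker conditions.

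\textbf{Step 3: identification with the discrete obstacle problem.} For piecewise linears, $v_h\ge0$ in $\Omega$ if and only if $v_i\ge0$ at all nodes. Hence $K_h=\{v:\,v_i\ge0\}$, and clearly $K_h\subset K$. The functional $J(v)=\tfrac12 v^TAv-F^Tv$ is strictly convex, so it has a unique minimiser over the closed convex cone $K_h$. Its KKT conditions read $AP-F=\mu$ with $\mu\ge0$ and $\mu_iP_i=0$. Setting $\mu=D\lambda$, with $D$ positive diagonal, maps these exactly onto the conditions of Step 2. Since KKT conditions are necessary and sufficient for a convex quadratic over a polyhedral cone, the fixed point is this unique minimiser.

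The main obstacle is this last identification. One must check that the lumped-mass pairing only rescales the multiplier by positive weights, and that $F$ is the consistent load, so that $F^Tv=(f,v_h)_\Omega$. This is immediate once the nodal diagonal form of $D$ is used; the rest is linear algebra.
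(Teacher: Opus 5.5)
Your proposal is correct and follows essentially the same route as the paper. You decouple the second block row nodewise, use positive definiteness of $A$ on vectors supported in $\mathcal{I}$, and invoke sufficiency of the Kuhn--Tucker conditions for $\min\{\tfrac12v^{\rm T}Av-F^{\rm T}v:\ v\geq 0\}$ with the multiplier rescaled by the positive nodal masses $m_i$; the only difference is cosmetic, in that you invert the principal submatrix $A_{\mathcal{I}\mathcal{I}}$ where the paper checks the homogeneous case via $P^{\rm T}AP=0$, which is the same fact.
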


\begin{proof}
With zero right-hand side the second block row gives $P_i=0$ on
$\mathcal{A}$ and $\lambda_i=0$ on $\mathcal{I}$, since $m_i>0$. The first
row at $i\in\mathcal{I}$ is then $(AP)_i=0$, and since $P$ vanishes on
$\mathcal{A}$, $P^{\rm T}AP=0$, so $P=0$; the first row on $\mathcal{A}$
then gives $\lambda=0$. The same computation with the actual right-hand side
gives the stated structure, and with it the reduction of the test argument
and the independence of $\gamma$. A piecewise linear function attains its
extrema at the nodes, so the nodal constraints describe $K_h$ exactly, and
$K_h\subset K$. At a fixed point the conditions $\lambda_i\geq 0$ and $P_i=0$
on $\mathcal{A}$ and $P_i>0$ and $\lambda_i=0$ on $\mathcal{I}$ are the
Kuhn--Tucker conditions of $\min\{\tfrac12v^{\rm T}Av-F^{\rm T}v:\ v\geq 0\}$,
which are sufficient because $A$ is positive definite.
\end{proof}

\subsection{Error Estimate}
\label{subsec:reynolds-error}

Because $K_h\subset K$ the method is conforming, and an error estimate
follows in the classical way.

\begin{thm}\label{thm:reynolds-error}
Let $P$ and $\lambda$ be as in Theorem~\ref{thm:reynolds-well-posedness} and let $P_h$ solve the
nodal method. Then
\begin{equation}\label{eq:reynolds-quasi-optimal-error}
\|P-P_h\|_{H^1(\Omega)}\leq C\inf_{v_h\in K_h}
\left(\|P-v_h\|_{H^1(\Omega)}
+\|\lambda\|_{L_2(\Omega)}^{1/2}\|P-v_h\|_{L_2(\Omega)}^{1/2}\right)
\end{equation}
with $C$ depending only on $d_0$, on $\|d^3\|_{L_\infty}$ and on $\Omega$.
If in addition $P\in H^2(\Omega)$ then
\begin{equation}\label{eq:reynolds-h1-rate}
\|P-P_h\|_{H^1(\Omega)}\leq C h
\left(\vert P\vert_{H^2(\Omega)}
+\|\lambda\|_{L_2(\Omega)}^{1/2}\vert P\vert_{H^2(\Omega)}^{1/2}\right)
\end{equation}
\end{thm}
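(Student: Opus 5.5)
The plan is Falk's classical argument for conforming approximations of variational inequalities. By Theorem~\ref{thm:reynolds-nodal-structure}, $P_h$ is the minimiser of $\tfrac12a(v_h,v_h)-(f,v_h)_\Omega$ over $K_h$. It therefore satisfies the discrete variational inequality $a(P_h,v_h-P_h)\geq (f,v_h-P_h)_\Omega$ for all $v_h\in K_h$. Since $K_h\subset K$, we may take $v=P_h$ in \eqref{eq:reynolds-variational-inequality} to get $a(P,P_h-P)\geq (f,P_h-P)_\Omega$.

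For any $v_h\in K_h$ write
\[
a(P-P_h,P-P_h)=a(P-P_h,P-v_h)+a(P,v_h-P)+a(P,P-P_h)+a(P_h,P_h-v_h).
\]
The last term is bounded by $-(f,v_h-P_h)_\Omega$ from the discrete inequality. Combining this with the continuous inequality leaves the residual $a(P,v_h-P)-(f,v_h-P)_\Omega$. Using the equilibrium identity \eqref{eq:reynolds-kkt-equilibrium} with $q=v_h-P\in H^1_0(\Omega)$, which requires $\lambda\in L_2(\Omega)$, this residual equals $(\lambda,v_h-P)_\Omega\leq\|\lambda\|_{L_2}\|P-v_h\|_{L_2}$. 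We could even drop part of it using $\lambda P=0$, but Cauchy--Schwarz is enough.

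Coercivity, $a(w,w)\geq d_0^3\|\nabla w\|^2\geq c\|w\|_{H^1}^2$ by Poincar\'e, together with boundedness via $\|d^3\|_{L_\infty}$ and Young's inequality on $a(P-P_h,P-v_h)$, then gives
\[
\|P-P_h\|_{H^1}^2\leq C\bigl(\|P-v_h\|_{H^1}^2+\|\lambda\|_{L_2}\|P-v_h\|_{L_2}\bigr).
\]
Taking square roots and the infimum over $K_h$ yields \eqref{eq:reynolds-quasi-optimal-error}.

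For \eqref{eq:reynolds-h1-rate}, the key point is that the nodal interpolant $I_hP$ is admissible. In two dimensions $P\in H^2\subset C^0$, so $I_hP$ is well defined; it vanishes on $\partial\Omega$ and has nodal values $P(x_i)\geq 0$. Since a piecewise linear function attains its extrema at the nodes, $I_hP\in K_h$; this is where $k=1$ matters. Standard interpolation estimates give $\|P-I_hP\|_{H^1}\leq Ch|P|_{H^2}$ and $\|P-I_hP\|_{L_2}\leq Ch^2|P|_{H^2}$. Hence $\|\lambda\|^{1/2}\|P-I_hP\|_{L_2}^{1/2}\leq Ch\|\lambda\|^{1/2}|P|_{H^2}^{1/2}$, which is the claimed rate.

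The only delicate points are, first, the use of \eqref{eq:reynolds-kkt-equilibrium}, which needs the stated regularity $\lambda\in L_2$. Second, one must check that $I_hP$ respects both the sign constraint and the boundary condition, which holds for polygonal $\Omega$ exactly triangulated by $\mathcal{T}$. I expect no real obstacle beyond keeping the constants dependent only on $d_0$, $\|d^3\|_{L_\infty}$ and the Poincar\'e constant of $\Omega$ (plus shape regularity in the interpolation step).
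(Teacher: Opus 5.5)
Your proof is correct and follows essentially the paper's argument: the same Falk-type splitting yields $a(e,e)\leq a(e,P-v_h)+(\lambda,v_h-P)_\Omega$, and the Lagrange interpolant $I_hP\in K_h$ then gives the rate. The only difference is bookkeeping: you control $a(P,P-P_h)-(f,P-P_h)_\Omega$ by testing the continuous variational inequality with $P_h\in K_h\subset K$, whereas the paper applies the equilibrium identity to $v_h-P_h$ and uses $(\lambda,P)_\Omega=0$ and $(\lambda,P_h)_\Omega\geq 0$, which amounts to the same inequality.
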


\begin{proof}
Let $e:=P-P_h$ and $v_h\in K_h$. By Theorem~\ref{thm:reynolds-nodal-structure},
$a(P_h,v_h-P_h)\geq(f,v_h-P_h)_\Omega$, and by
\eqref{eq:reynolds-kkt-equilibrium}, $a(P,w)-(f,w)_\Omega=(\lambda,w)_\Omega$
for $w\in H^1_0(\Omega)$. Hence
\begin{equation}\label{eq:reynolds-discrete-variational-bound}
a(e,e) = a(e,P-v_h)+a(P,v_h-P_h)-a(P_h,v_h-P_h)
\leq a(e,P-v_h)+(\lambda,v_h-P_h)_\Omega
\end{equation}
Since $(\lambda,P)_\Omega=0$ and $(\lambda,P_h)_\Omega\geq 0$, the last term
is at most $(\lambda,v_h-P)_\Omega\leq\|\lambda\|_{L_2}\|P-v_h\|_{L_2}$, and
coercivity, continuity and Young's inequality give
\eqref{eq:reynolds-quasi-optimal-error}. For \eqref{eq:reynolds-h1-rate} take
$v_h=I_hP$, the Lagrange interpolant, which lies in $K_h$ because
$P\geq 0$ at the nodes, and use $\|P-I_hP\|_{H^1}\leq Ch\vert P\vert_{H^2}$
and $\|P-I_hP\|_{L_2}\leq Ch^2\vert P\vert_{H^2}$.
\end{proof}

\begin{rem}
The rate in \eqref{eq:reynolds-h1-rate} is optimal for piecewise linears.
For higher polynomial degree the corresponding higher rate is not attained
in general, since the solution is at best $C^{1,1}$ across the free
boundary however smooth the data. Section~\ref{subsec:reynolds-convergence}
reports a first-order \(H^1\) rate for the raw finite element solution on a
shape-regular two-dimensional mesh family with a circular free boundary.
We have not proved an \(L_2\) estimate; the second-order pressure error
observed there is specific to that experiment.
\end{rem}


\section{Stokes Cavitation}
\label{sec:stokes-cavitation}

In this section we show that the deviatoric stress makes the constrained
scalar the mechanical pressure, prove well-posedness of the augmented
formulation, and analyse its stabilised Crouzeix--Raviart discretisation,
including an a priori error estimate.

\subsection{Mechanical Pressure and Constitutive Consistency}
\label{subsec:mechanical-pressure}

The problem of this section was treated as an application of the abstract
augmented Lagrangian framework in \cite{BuHaLa23}, using the full gradient
in the viscous form. Here we give a self-contained derivation and show that
the deviatoric law, which has zero bulk viscosity, makes the constrained
scalar equal to the mechanical pressure.

Consider a domain $\Omega$ in ${\mathbb{R}}^n$, $n=2$ or $n=3$ with boundary
$\partial\Omega$. We consider a lubricant with viscosity $\mu$. The Stokes
equation can then be written
\begin{equation}\label{eq:stokes-strong}
-\nabla\cdot\bm{\sigma}(\boldsymbol u,p) = {\boldsymbol f}\;\text{and}\; \nabla\cdot
\boldsymbol u = 0\quad\text{in}\;\Omega
\end{equation}
with the constitutive law
\begin{equation}\label{eq:deviatoric-cauchy-stress}
\bm{\sigma}(\boldsymbol u,p) := 2\mu\left(\bfeps(\boldsymbol u)
-\tfrac13\left(\nabla\cdot\boldsymbol u\right)\bfI\right) - p\,\bfI ,
\qquad
\bfeps(\boldsymbol u) := \frac12\left(\nabla\boldsymbol u + \left(\nabla\boldsymbol u\right)^{\rm T}\right)
\end{equation}
that is, the Newtonian law with the bulk viscosity set to zero, which is
Stokes' hypothesis; see \cite{Ba67}. In Lam\'e form \eqref{eq:deviatoric-cauchy-stress} amounts
to taking the first Lam\'e parameter equal to $-2\mu/3$. The factor
$\tfrac13$ is that of the physical, three-dimensional stress and is kept
also when $n=2$: a two-dimensional computation is a plane flow, its stress
tensor has a third normal component, and the trace below is over the three
physical directions (cf.
Remark~\ref{rem:deviatoric-stability} below).

For an incompressible flow the deviatoric projection in \eqref{eq:deviatoric-cauchy-stress} is
inert and one may as well write $\bm{\sigma} = 2\mu\bfeps(\bm{u})-p\bfI$,
which is what one usually does. Because of the constraint this is not the form we use here.

We adopt, as lubrication theory classically does, a pressure criterion for
cavitation: the film ruptures where the mechanical pressure, the negative
of the mean normal stress,
\begin{equation}\label{eq:mechanical-pressure}
\bar p := -\tfrac13\,{\rm tr}\,\bm{\sigma}
\end{equation}
would fall below the vapour pressure, taken as zero. The trace is over the
three physical directions. Joseph \cite{Jo98} has proposed instead a
criterion on the largest principal tensile stress.

Taking the
trace of \eqref{eq:deviatoric-cauchy-stress} and using ${\rm tr}\,\bfeps^{\rm dev}=0$ gives
$\bar p = p$, so that the constraint imposed below, $p\geq 0$, is exactly
\eqref{eq:mechanical-pressure}. Zero bulk viscosity is thus the constitutive closure that
makes $p$ the mechanical pressure. Had we kept
$\bm{\sigma} = 2\mu\bfeps(\bm{u})-p\bfI$ the same computation would have
returned
\begin{equation}
\bar p = p - \tfrac{2\mu}{3}\,\nabla\cdot\bm{u}
\label{eq:mechanical-pressure-standard-law}
\end{equation}
and imposing $p\geq0$ would constrain a quantity differing from the
mechanical pressure wherever $\nabla\cdot\bm{u}>0$. On the cavitated set,
where $p=0$, it gives $\bar p = -\tfrac{2\mu}{3}\nabla\cdot\bm{u}\leq 0$
under the customary stress, with strict inequality wherever the lubricant
dilates; Section~\ref{subsec:constitutive-discretization} gives numbers.

The two constitutive laws assign stress tensors to the same $(\bm{u},p)$
that differ by the isotropic term $\tfrac{2\mu}{3}(\nabla\cdot\bm{u})\bfI$,
so every principal stress is shifted by that amount, and a criterion on the
largest principal stress is affected in the same way. The distinction is
invisible in the Reynolds model of Section~\ref{sec:reynolds-cavitation}.

\subsection{Augmented Formulation}
\label{subsec:stokes-augmented}

We split the boundary into disjoint relatively open parts,
$\partial\Omega = \overline{\Gamma\suprm{D}}\cup
\overline{\Gamma\suprm{N}}\cup\overline{\Gamma\suprm{S}}\cup\overline{\Gamma\suprm{E}}$,
and impose
\begin{align}
\boldsymbol u ={}& \boldsymbol g\onD \label{eq:stokes-dirichlet-boundary}\\
\bm{\sigma}(\boldsymbol u,p)\cdot\bfn ={}& \boldsymbol t\onN \label{eq:stokes-neumann-boundary}\\
\boldsymbol u\cdot\bfn ={}& 0,\qquad
(\bfI-\bfn\otimes\bfn)\bm{\sigma}(\boldsymbol u,p)\bfn=\bm{0}
\quad\text{on }\Gamma\suprm{S} \label{eq:stokes-symmetry-boundary}\\
(\bfI-\bfn\otimes\bfn)\boldsymbol u ={}& \bm{0},\qquad
\bfn\cdot\bm{\sigma}(\boldsymbol u,p)\bfn=-p\subrm{b}
\quad\text{on }\Gamma\suprm{E} \label{eq:stokes-end-boundary}
\end{align}
where $\bfn$ is the outward unit normal, $\boldsymbol t$ a prescribed
traction and $p\subrm{b}\geq 0$ a prescribed ambient pressure. On
$\Gamma\suprm{S}$ the normal velocity is essential and the tangential
traction natural; on $\Gamma\suprm{E}$ the roles are exchanged. We assume
throughout that $\Gamma\suprm{D}$ has positive measure. In the channel
tests $\Gamma\suprm{D}$ contains the walls and the prescribed inflow, and
$\Gamma\suprm{N}$ is the outlet with $\boldsymbol t = \boldsymbol 0$. In
the pit tests only wall velocities are prescribed, and the two ends belong
either to $\Gamma\suprm{N}$, with $\boldsymbol t=\bm{0}$, or to
$\Gamma\suprm{E}$. The part $\Gamma\suprm{S}$ is used only in the
three-dimensional extrusion of Section~\ref{subsec:stokes-three-dimensional}.
We use the same identification of ambient and cavitation reference
pressures as in Section~\ref{sec:reynolds-cavitation}, and impose
$p\geq 0$ in $\Omega$. The constrained problem can be written as a
variational inequality. This is the model introduced in
\cite{NiHa11} and subsequently discretised in \cite{GiHaSu10}. Let
\begin{equation}
Q_+ := \{q\in L_2(\Omega):\ q\geq 0\}
\label{eq:nonnegative-pressure-cone}
\end{equation}
and, with the spaces $V$ and $V_{\bm{g}}$ of \eqref{eq:stokes-spaces} below, seek
$\boldsymbol u\in V_{\bm{g}}$ and $p\in Q_+$ such that
\begin{equation}
\int_{\Omega} \bm{\sigma}(\boldsymbol u,0) :\bfeps({\boldsymbol v}) \dO
-\int_{\Omega} p\,\nabla\cdot{\boldsymbol v} \dO = \int_{\Omega}{
\boldsymbol f}\cdot{\boldsymbol v} \dO
+\int_{\Gamma\suprm{N}}{\boldsymbol t}\cdot{\boldsymbol v} \,\mathrm{d}s
-\int_{\Gamma\suprm{E}}p\subrm{b}\,{\boldsymbol v}\cdot\bfn \,\mathrm{d}s
\label{eq:stokes-variational-equilibrium}
\end{equation}
for all ${\boldsymbol v}\in V$, and
\begin{equation}
-\int_{\Omega}\nabla\cdot\boldsymbol u\, (q-p) \dO \leq 0,\quad
\forall q\in Q_+
\label{eq:stokes-pressure-inequality}
\end{equation}
A stabilised equal-order discretisation of this variational inequality,
with a posteriori error estimates and adaptive mesh refinement, is given
in \cite{GiHaSu10}; here we instead use augmentation.
To write the problem as a variational equality, we use the Kuhn--Tucker conditions
\begin{equation}\label{eq:stokes-complementarity}
p\geq 0, \quad \nabla\cdot\bm{u} \geq 0, \quad p\,\nabla\cdot\bm{u} = 0
\end{equation}
and
again replace conditions \eqref{eq:stokes-complementarity} by the equivalent statement
\begin{equation}\label{eq:stokes-complementarity-map}
p = \frac{1}{\gamma}[\gamma p -\nabla\cdot\bm{u}]_+
\end{equation}
with $\gamma$ a positive number.

\begin{rem}\label{rem:outflow-flux}
The parts $\Gamma\suprm{N}$ and $\Gamma\suprm{E}$, on which the normal
velocity is free, are not merely a convenience. Since
$\nabla\cdot\bm{u}\geq 0$ almost everywhere by \eqref{eq:stokes-complementarity}, the divergence
theorem gives
\begin{equation}\label{eq:outflow-flux-balance}
0\leq \int_\Omega \nabla\cdot\bm{u}\dO = \int_{\Gamma\suprm{D}}\bm{g}\cdot\bfn\,\mathrm{d}s + \int_{\Gamma\suprm{N}\cup\Gamma\suprm{E}}\bm{u}\cdot\bfn\,\mathrm{d}s
\end{equation}
so the volumetric expansion of the lubricant, integrated over the film,
equals the net outward flux through the boundary. If $\Gamma\suprm{N}\cup\Gamma\suprm{E}=\emptyset$ and the Dirichlet data
satisfy the usual compatibility condition $\int_{\partial\Omega}\bm{g}\cdot\bfn\,\mathrm{d}s = 0$,
then \eqref{eq:outflow-flux-balance} forces $\nabla\cdot\bm{u} = 0$ almost everywhere, the constraint
is saturated everywhere, and the model becomes equality-constrained
incompressible Stokes flow. The pressure is then an equality multiplier,
determined only modulo constants unless it is normalised. Thus an outflow
boundary through which the lubricant can expand is a prerequisite for
nontrivial cavitation in the present inequality formulation, for strict
feasibility, and for taking the unnormalised pressure space in
\eqref{eq:stokes-spaces} to be all of $L_2(\Omega)$.
\end{rem}

Defining function spaces
\begin{equation}\label{eq:stokes-spaces}
V_{\bm{w}} =\{\bm{v}\in [H^1(\Omega)]^n: \; \bm{v} =\bm{w}\;\text{on $\Gamma\suprm{D}$},\;
\bm{v}\cdot\bfn=0\;\text{on $\Gamma\suprm{S}$},\;
(\bfI-\bfn\otimes\bfn)\bm{v}=\bm{0}\;\text{on $\Gamma\suprm{E}$}\},
\quad V := V_{\bm{0}}, \quad Q = L_2(\Omega)
\end{equation}
and seeking $(\bm{u},p)\in V_{\bm{g}}\times Q$ equilibrium requires, with
\begin{align}
a(\bm{u},\bm{v}) &:= \int_{\Omega}\left(2\mu\,\bfeps(\bm{u}):\bfeps(\bm{v})
-\tfrac{2\mu}{3}(\nabla\cdot\bm{u})(\nabla\cdot\bm{v})\right)\dO
= \int_{\Omega}\bm{\sigma}(\bm{u},0):\bfeps(\bm{v})\dO
\label{eq:stokes-viscous-form}\\
L(\bm{v}) &:= \int_{\Omega} \bm{f}\cdot\bm{v}\dO
+ \int_{\Gamma\suprm{N}} \bm{t}\cdot\bm{v}\,\mathrm{d}s
- \int_{\Gamma\suprm{E}} p\subrm{b}\,\bm{v}\cdot\bfn\,\mathrm{d}s
\label{eq:stokes-load}
\end{align}
that
\begin{equation}
a(\bm{u},\bm{v}) -\int_{\Omega} p\, \nabla\cdot\bm{v}\dO = L(\bm{v})
\label{eq:stokes-weak-equilibrium}
\end{equation}
where $\bm{v}\in V$. Note that the traction term in $L$ and the pressure term in
$\bm{\sigma}$ enter together through \eqref{eq:stokes-neumann-boundary}, so that the natural boundary
condition delivered by this weak form is precisely \eqref{eq:stokes-neumann-boundary}, together
with the normal traction condition in \eqref{eq:stokes-end-boundary}, for the Cauchy
stress \eqref{eq:deviatoric-cauchy-stress}. To obtain an augmented Lagrangian formulation we follow \cite{ChHi13}
directly, rather than starting from a functional as in
Section~\ref{sec:reynolds-cavitation}, and write $\nabla\cdot\bm{v} = \nabla\cdot\bm{v} + \gamma q-\gamma q$
for an arbitrary function $q\in Q$, so that we may write
\begin{equation}
a(\bm{u},\bm{v}) +\int_{\Omega} p\, (\gamma q -\nabla\cdot\bm{v}) \dO -\int_{\Omega}\gamma p\, q \dO = L(\bm{v})
\label{eq:stokes-augmented-identity}
\end{equation}
Replacing the pressure in the first integral by the expression in \eqref{eq:stokes-complementarity-map} we finally obtain the problem of finding $(\bm{u},p)\in V_{\bm{g}}\times Q$ such that
\begin{equation}\label{eq:stokes-augmented-form}
a(\bm{u},\bm{v}) +\int_{\Omega} \frac{1}{\gamma}[\gamma p-\nabla\cdot\bm{u} ]_+ (\gamma q -\nabla\cdot\bm{v}) \dO -\int_{\Omega}\gamma p\, q \dO = L(\bm{v})
\quad\forall (\bm{v},q)\in V\times Q \end{equation}

This problem is related to seeking stationary points to the functional
\begin{equation}\label{eq:stokes-augmented-functional}
\mathfrak{F}(\bm{u},p) := \frac12 a(\bm{u},\bm{u}) - L(\bm{u}) + \int_{\Omega}\frac{1}{2\gamma}\left[\gamma p-\nabla\cdot\bm{u}\right]_+^2\dO -\int_{\Omega}\frac{\gamma}{2} p^2\dO
\end{equation}
analogously to \eqref{eq:reynolds-augmented-functional}.

As in Section~\ref{sec:reynolds-cavitation}, the underlying problem is a
minimisation over the convex set of velocities with nonnegative divergence.

\subsection{Continuous Well-Posedness}
\label{subsec:stokes-well-posedness}

We henceforth assume that $\Omega$ is bounded, connected, and Lipschitz,
that the boundary partition is measurable and relatively open up to sets of
surface measure zero, and that $\Gamma\suprm{N}\cup\Gamma\suprm{E}$ contains a
relatively open patch. The mixed-boundary divergence range used below can then be stated
explicitly.

\begin{lem}\label{lem:continuous-divergence-range}
For the space $V$ in \eqref{eq:stokes-spaces},
\begin{equation}\label{eq:divergence-surjectivity}
\nabla\cdot : V\longrightarrow L_2(\Omega) \quad\text{is surjective}
\end{equation}
and admits a bounded right inverse.
\end{lem}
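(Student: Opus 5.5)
Given $q\in L_2(\Omega)$, we reduce to the classical Bogovski\u{\i} result for zero-mean data by first removing the mean of $q$ with a fixed smooth field that has nonzero flux through the open patch of $\Gamma\suprm{N}\cup\Gamma\suprm{E}$.

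\emph{Step 1: construct the flux-carrying field.} Choose a relatively open patch $\Sigma\subset\Gamma\suprm{N}\cup\Gamma\suprm{E}$ on which $\partial\Omega$ is a Lipschitz graph. Shrinking if necessary, we may take $\Sigma$ to lie inside a single one of $\Gamma\suprm{N}$ or $\Gamma\suprm{E}$, at positive distance from the other parts of the partition. This is possible because the parts are relatively open up to null sets.

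On this patch take a nonnegative cutoff $\phi\in C_c^\infty$ supported in a small neighbourhood of an interior point of $\Sigma$. Define $\bm{w}_0 := \phi\,\bm{N}$, where $\bm{N}$ is a smooth vector field transversal to $\partial\Omega$ near that point. If $\Sigma\subset\Gamma\suprm{E}$, the tangential trace must vanish. In that case choose $\bm{w}_0=\phi\nabla\psi$, with $\psi$ a local defining-type function, or simply extend a normal-only boundary datum $\phi\bfn$ into $\Omega$ by a bounded $H^{1/2}\to H^1$ right inverse of the trace, localised near $\Sigma$. If the Lipschitz normal makes this awkward, we flatten locally by a bi-Lipschitz chart.

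In either case $\bm{w}_0\in V$ and $\int_\Omega\nabla\cdot\bm{w}_0\dO=\int_\Sigma \bm{w}_0\cdot\bfn\,\mathrm{d}s=:\kappa\neq 0$. Normalise so that $\kappa=1$.

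\emph{Step 2: reduce to zero mean.} For $q\in L_2(\Omega)$, set $\bar q:=\int_\Omega q\dO$ and $q_0:=q-\bar q\,\nabla\cdot\bm{w}_0$, which has zero mean. Since $\Omega$ is bounded, connected and Lipschitz, the Bogovski\u{\i} operator (equivalently, Ne\v{c}as' inequality) provides a bounded linear map $B:L_{2,0}(\Omega)\to[H^1_0(\Omega)]^n$ with $\nabla\cdot Bq_0=q_0$. As $[H^1_0]^n\subset V$, the linear map
\[
R q := B\bigl(q-(\textstyle\int_\Omega q\dO)\nabla\cdot\bm{w}_0\bigr)+\bigl(\textstyle\int_\Omega q\dO\bigr)\bm{w}_0
\]
satisfies $\nabla\cdot Rq=q$. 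It is bounded, with $\|Rq\|_{H^1}\le C(\|B\|,\|\bm{w}_0\|_{H^1},|\Omega|)\|q\|_{L_2}$. This gives both surjectivity and the bounded right inverse.

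\emph{Main obstacle.} The substantive step is Step 1: producing $\bm{w}_0$ with the correct mixed boundary conditions and nonzero flux. This is the only place where the assumption that $\Gamma\suprm{N}\cup\Gamma\suprm{E}$ contains an open patch enters. Without it, Remark~\ref{rem:outflow-flux} shows the range would be confined to zero-mean functions.

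For an $\Gamma\suprm{E}$ patch, the prescribed trace $\phi\bfn$ is only $L_\infty$ on a Lipschitz boundary. We therefore plan to choose the patch, via the localisation freedom, as a piece of $\partial\Omega$ that is a Lipschitz graph, and to prescribe the trace $\phi\,\bm{e}$ with $\bm{e}$ the graph's vertical direction composed with projection onto the normal. Alternatively, we work in the flattened chart, where $\bfn$ is piecewise regular enough for $\phi\bfn\in H^{1/2}$ after mollifying $\phi$ along flat pieces.

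If this proves delicate, a fallback is to treat $\Gamma\suprm{E}$ patches where the boundary is $C^{1}$ on a small subset, which holds almost everywhere for Lipschitz domains on suitable pieces. It would suffice to assume that such a smooth subpatch exists.
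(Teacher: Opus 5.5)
Your argument is the paper's proof: a field in $V$ whose trace is supported in the free-normal patch and which carries unit flux removes the mean, the $[H^1_0(\Omega)]^n$ right inverse of the divergence handles the zero-mean remainder, and boundedness follows from the two liftings. The case $\Sigma\subset\Gamma\suprm{N}$ works as you do it, with $\bm{w}_0=\phi\bm{N}$ for a constant direction satisfying $\bm{N}\cdot\bfn\geq c>0$ in a Lipschitz chart.

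For a patch in $\Gamma\suprm{E}$ your concern is legitimate. The paper simply asserts that the field can be chosen normal to the boundary there. However, most of the remedies you list do not work.
\begin{itemize}
\item A field $\phi\nabla\psi$ is in $H^1$ only if $\psi\in H^2$, which a Lipschitz defining function is not.
\item The field $\phi(\bm{e}\cdot\bfn)\bfn$ is no more regular than $\bfn$.
\item A bi-Lipschitz flattening does not regularise the physical normal, which is what $(\bfI-\bfn\otimes\bfn)\bm{v}=\bm{0}$ refers to. A Piola-type change of variables to straighten it would lose $H^1$.
\item Rademacher's theorem gives differentiability almost everywhere, not a $C^1$ subpatch. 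A Lipschitz graph need contain no $C^1$ piece at all: take $g'=\chi_E$ with $E$ and its complement of positive measure in every interval.
\item Even on a $C^1$ subpatch, $\phi\bfn$ need not lie in $H^{1/2}$, since a continuous normal can be very rough.
\end{itemize}

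What does work is to require that $\Gamma\suprm{E}$ contain a flat subpatch, or more generally one of class $C^{1,\alpha}$ with $\alpha>\tfrac12$. On a flat piece, $\phi\bfn$ with constant $\bfn$ and a cutoff supported in a small ball about a point of that piece is admissible. This covers every geometry in the paper, whose ends are straight. For a general Lipschitz $\Gamma\suprm{E}$, a nonzero $H^1$ field with purely normal trace need not exist by standard means. This should therefore be stated as a hypothesis rather than argued away.
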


\begin{proof}
Because $\Gamma\suprm{N}\cup\Gamma\suprm{E}$ contains a relatively open patch,
one can choose $\bm{\xi}\in V$ with boundary trace supported there, normal
to the boundary on $\Gamma\suprm{E}$, and normalise it so
that $\int_\Omega\nabla\cdot\bm{\xi}\dO=1$. Given $q\in L_2(\Omega)$,
the function
$q_0:=q-(\int_\Omega q\dO)\nabla\cdot\bm{\xi}$ has zero mean. The
standard right inverse of the divergence on $[H^1_0(\Omega)]^n$ provides
$\bm{z}$ with $\nabla\cdot\bm{z}=q_0$; see, for example,
\cite{BrFo91}. Since $[H^1_0(\Omega)]^n\subset V$, the field
$\bm{v}:=\bm{z}+(\int_\Omega q\dO)\bm{\xi}$ belongs to $V$ and has
divergence $q$. The bounded lifting and right-inverse estimates give the
boundedness assertion.
\end{proof}

\begin{thm}\label{thm:stokes-well-posedness}
Let the assumptions above hold, let $\vert\Gamma\suprm{D}\vert>0$, and
assume that the only conformal Killing field satisfying the homogeneous
essential conditions in \eqref{eq:stokes-spaces} is zero. A sufficient
condition used here is that $\Gamma\suprm{D}$ contain a relatively open
planar patch. Let $\mu>0$ be constant and let
$\mathcal{K} := \{\bm{v}\in V_{\bm{g}}:\ \nabla\cdot\bm{v}\geq 0
\text{ a.e.\ in }\Omega\}$, which is nonempty by the construction in
Remark~\ref{rem:strictly-feasible-velocity} below. Then the problem
\begin{equation}\label{eq:stokes-constrained-minimization}
\bm{u} = \arg\min_{\bm{v}\in\mathcal{K}}
\left\{\tfrac12a(\bm{v},\bm{v})-L(\bm{v})\right\}
\end{equation}
has exactly one solution, there is exactly one $p\in Q$ with
$p\geq 0$ such that $(\bm{u},p)$ satisfies \eqref{eq:stokes-complementarity} together with
$a(\bm{u},\bm{v})-(p,\nabla\cdot\bm{v})_\Omega = L(\bm{v})$ for all
$\bm{v}\in V$, and this pair solves \eqref{eq:stokes-augmented-form} for every $\gamma>0$.
Conversely, every solution of \eqref{eq:stokes-augmented-form} is this pair, so that, as in
Theorem~\ref{thm:reynolds-well-posedness}, the solution set of \eqref{eq:stokes-augmented-form} does not depend on
$\gamma$.
\end{thm}

\begin{proof}
$\mathcal{K}$ is closed and convex, and $a(\cdot,\cdot)$ is coercive on
$V$. For $n=2$ this is elementary: since
$(\nabla\cdot\bm{v})^2\leq 2\vert\bfeps(\bm{v})\vert^2$ pointwise,
\begin{equation}\label{eq:stokes-deviatoric-coercivity}
a(\bm{v},\bm{v}) = 2\mu\|\bfeps(\bm{v})\|^2_\Omega
-\tfrac{2\mu}{3}\|\nabla\cdot\bm{v}\|^2_\Omega
\geq \tfrac{2\mu}{3}\|\bfeps(\bm{v})\|^2_\Omega
\end{equation}
and we can apply Korn's inequality, available because the full velocity trace vanishes
on $\Gamma\suprm{D}$ of positive measure.

For $n=3$ the form is $2\mu\|\bfeps^{\rm dev}(\bm{v})\|_\Omega^2$, and
coercivity on $V$ follows from the trace-free Korn inequality of Dain
\cite{Da06} and the kernel-elimination hypothesis, by the compactness
argument of Peetre and Tartar; Lemma~\ref{lem:continuous-trace-free-korn}
gives the version used here. The functional in \eqref{eq:stokes-constrained-minimization} is then strictly
convex and coercive on $V_{\bm{g}}$, which gives a unique $\bm{u}$.

For the multiplier we need a constraint qualification. Slater's condition
is unavailable, since the cone of nonpositive functions has empty interior
in $L_2(\Omega)$. Robinson's condition holds instead: writing
$\bm{v} = \bm{w}_0+\bm{z}$ with $\bm{w}_0\in V_{\bm{g}}$ fixed and
$\bm{z}\in V$, \eqref{eq:divergence-surjectivity} gives
$\{-\nabla\cdot\bm{z}-\nabla\cdot\bm{w}_0 :\bm{z}\in V\} = L_2(\Omega)$, so
$0\in{\rm int}\{-\nabla\cdot\bm{z}-\nabla\cdot\bm{w}_0+c:\ \bm{z}\in V,\ c\geq 0\}$. Convex duality in the form of the Zowe--Kurcyusz theorem \cite{ZoKu79}
then provides $p$ in the dual of $L_2(\Omega)$, which is $L_2(\Omega)$
itself, lying in the dual of the nonnegative cone, which is again the
nonnegative cone, so that $p\geq 0$; and it provides
$(p,\nabla\cdot\bm{u})_\Omega=0$ together with
$a(\bm{u},\bm{v})-(p,\nabla\cdot\bm{v})_\Omega = L(\bm{v})$ for
$\bm{v}\in V$. These are \eqref{eq:stokes-complementarity}. Uniqueness of $p$ is
\eqref{eq:divergence-surjectivity} once more. Lemma
\ref{lem:complementarity-map} applied pointwise to \eqref{eq:stokes-complementarity} gives
\eqref{eq:stokes-complementarity-map}, and substituting it in the equilibrium equation returns
\eqref{eq:stokes-augmented-form}; no step involves $\gamma$.

For the converse, taking $\bm{v}=\bm{0}$ in \eqref{eq:stokes-augmented-form}
shows that $\gamma^{-1}[\gamma p-\nabla\cdot\bm{u}]_+=p$ almost everywhere,
so \eqref{eq:stokes-complementarity} holds by
Lemma~\ref{lem:complementarity-map}, and taking $q=0$ returns the
equilibrium equation. For $\bm{v}\in\mathcal{K}$,
$a(\bm{u},\bm{v}-\bm{u})-L(\bm{v}-\bm{u})=(p,\nabla\cdot\bm{v})_\Omega\geq 0$,
so $\bm{u}$ solves \eqref{eq:stokes-constrained-minimization}, and uniqueness
identifies the pair.
\end{proof}

\begin{rem}\label{rem:strictly-feasible-velocity}
A strictly feasible field exists. Let $\tilde{\bm{w}}\in[H^1(\Omega)]^n$
extend $\bm{g}$, and choose $\bm{z}_1,\bm{z}_2\in V$ with
$\nabla\cdot\bm{z}_1 = -\nabla\cdot\tilde{\bm{w}}$ and
$\nabla\cdot\bm{z}_2 = c_0$ for a constant $c_0>0$, by
\eqref{eq:divergence-surjectivity}. Then
$\bm{w} := \tilde{\bm{w}}+\bm{z}_1+\bm{z}_2\in V_{\bm{g}}$ has
$\nabla\cdot\bm{w} = c_0$. If $\Gamma\suprm{N}\cup\Gamma\suprm{E}=\emptyset$,
the range of the divergence on $V$ consists of the functions with vanishing
mean, which never contains $c_0$, so \eqref{eq:divergence-surjectivity} and
Robinson's condition fail; this is the analytical counterpart of
Remark~\ref{rem:outflow-flux}. The problem is then incompressible Stokes
flow, with the pressure in a normalised or quotient space. Without an
essential boundary condition that removes the rigid or conformal Killing
kernel, coercivity and uniqueness of $\bm{u}$ are lost as well.
\end{rem}

\subsection{Stabilised Crouzeix--Raviart Discretisation}
\label{subsec:stokes-discretization}

For the discrete problem we take the Crouzeix--Raviart velocity with
piecewise constant pressure. We assume that the mesh resolves the boundary
partition and that $\Gamma\suprm{N}\cup\Gamma\suprm{E}$ contains at least one
complete boundary face. With $\mathcal{F}^h_{\rm i}$ denoting the interior faces, set
\begin{equation}\label{eq:cr-velocity-space}
\vec{V}^h_{\bm{w}} = \bm{v}\in[L_2(\Omega)]^n:\;
\left\{\begin{aligned}
\bm{v}\vert_T&\in[\mathbb{P}_1(T)]^n &&\forall T\in\mathcal{T}^h \\
\textstyle\int_F\jump{\bm{v}}&=\bm{0} &&\forall F\in\mathcal{F}^h_{\rm i} \\
\textstyle\int_F(\bm{v}-\bm{w})&=\bm{0} &&\forall F\subset\Gamma\suprm{D} \\
\textstyle\int_F\bm{v}\cdot\bfn&=0 &&\forall F\subset\Gamma\suprm{S} \\
\textstyle\int_F(\bfI-\bfn\otimes\bfn)\bm{v}&=\bm{0} &&\forall F\subset\Gamma\suprm{E}
\end{aligned}\right.
\end{equation}
with $\vec{V}^h := \vec{V}^h_{\bm{0}}$, the jump being taken as the trace on
a boundary face, and
\begin{equation}\label{eq:piecewise-constant-pressure}
  Q^{h} = \{q\in L_2(\Omega):\; q\vert_{T}
\in \mathbb{P}_{0}(T),\,\forall T\in {\mathcal T}^h\}
\end{equation}
The degrees of freedom of $\vec{V}^h$ are the mean values over the faces $F$
of the mesh, and the essential data enter through their face means. On a
symmetry face only the normal component of the mean is constrained, and the
tangential components remain test degrees of freedom and impose the natural
zero tangential traction in \eqref{eq:stokes-symmetry-boundary}; on a face in
$\Gamma\suprm{E}$ the roles are exchanged. The full
Dirichlet patch still eliminates the global conformal kernel, so the
coercivity argument below is unchanged.

\begin{lem}\label{lem:cr-divergence-range}
The mixed-boundary Crouzeix--Raviart space satisfies
\begin{equation}\label{eq:discrete-divergence-surjectivity}
\nabla\cdot\vec{V}^h = Q^h
\end{equation}
\end{lem}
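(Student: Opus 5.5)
The inclusion $\nabla\cdot\vec{V}^h\subseteq Q^h$ is immediate: on each element $\bm{v}\vert_T$ is affine, so its divergence is constant. For the reverse inclusion I would follow the structure of Lemma~\ref{lem:continuous-divergence-range}. First split off a one-dimensional ``flux'' direction. Then reduce the zero-mean part to the classical Crouzeix--Raviart inf-sup property.

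\textbf{Step 1 (zero-mean part).} Let $\vec{V}^h_0\subset\vec{V}^h$ be the Crouzeix--Raviart functions whose face means vanish on every boundary face. It is classical that $\nabla\cdot:\vec{V}^h_0\to Q^h\cap L_{2,0}(\Omega)$ is onto. The argument uses the Fortin operator $\Pi_h$, defined by $\int_F\Pi_h\bm{z}=\int_F\bm{z}$ on every face. Given $q_0\in Q^h$ with zero mean, take $\bm{z}\in[H^1_0(\Omega)]^n$ with $\nabla\cdot\bm{z}=q_0$. Elementwise integration by parts gives
\[
\int_T\nabla\cdot\Pi_h\bm{z}=\int_{\partial T}\Pi_h\bm{z}\cdot\bfn=\int_{\partial T}\bm{z}\cdot\bfn=\int_T q_0 .
\]
Hence $\nabla\cdot\Pi_h\bm{z}=q_0$, since both sides are constant on $T$. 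Because $\bm{z}$ vanishes on $\partial\Omega$, all boundary face means of $\Pi_h\bm{z}$ are zero, so $\Pi_h\bm{z}\in\vec{V}^h_0\subset\vec{V}^h$.

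\textbf{Step 2 (flux direction).} Use the assumption that $\Gamma\suprm{N}\cup\Gamma\suprm{E}$ contains a complete boundary face $F_0$, with adjacent element $T_0$. Define $\bm{\xi}_h\in\vec{V}^h$ by setting all face means to zero except on $F_0$, where the mean is $\bfn_{F_0}$. This is admissible: on a face of $\Gamma\suprm{N}$ the mean is unconstrained, and on a face of $\Gamma\suprm{E}$ only the tangential part of the mean must vanish, which $\bfn_{F_0}$ satisfies. Then
\[
\int_\Omega\nabla\cdot\bm{\xi}_h=\sum_T\int_{\partial T}\bm{\xi}_h\cdot\bfn=\vert F_0\vert>0 .
\]
This holds because the interior face contributions cancel, since face means are single-valued. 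I would then rescale $\bm{\xi}_h$ so that $\int_\Omega\nabla\cdot\bm{\xi}_h=1$.

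\textbf{Step 3 (assembly).} Given $q\in Q^h$, set $c:=\int_\Omega q$ and $q_0:=q-c\,\nabla\cdot\bm{\xi}_h$. Then $q_0\in Q^h$ has zero mean. Taking $\bm{v}_0$ from Step~1 with $\nabla\cdot\bm{v}_0=q_0$, the field $\bm{v}:=\bm{v}_0+c\,\bm{\xi}_h\in\vec{V}^h$ satisfies $\nabla\cdot\bm{v}=q$.

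The main point needing care is Step~2, namely checking that the single-face function is compatible with each boundary condition type. This is exactly why the lemma needs a complete face in $\Gamma\suprm{N}\cup\Gamma\suprm{E}$, and why a symmetry face would not do, since there the normal mean is fixed to zero. Step~1 also relies on the facewise-mean property, and I would check it on elements touching $\Gamma\suprm{S}$ and $\Gamma\suprm{E}$. There it is harmless, because $\bm{z}$ has zero trace on all of $\partial\Omega$.
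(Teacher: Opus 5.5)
Your proof is correct and follows the paper's argument. Both take surjectivity of the divergence from $\vec{V}^h_0$ onto the zero-mean piecewise constants, and add the single-face field $\psi_{F_0}\bfn_{F_0}$ on a complete face of $\Gamma\suprm{N}\cup\Gamma\suprm{E}$ to recover the constant mode. The only difference is that you prove the zero-mean step explicitly with the face-mean Fortin interpolant, whereas the paper simply cites the standard Crouzeix--Raviart/$\mathbb{P}_0$ inf--sup result.
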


\begin{proof}
The inclusion from left to right is immediate. Let
$Q^h_0:=\{q_h\in Q^h:(q_h,1)_\Omega=0\}$. The standard
Crouzeix--Raviart/$\mathbb{P}_0$ inf--sup result gives
$\nabla\cdot\vec{V}^h_0=Q^h_0$ for the subspace
$\vec{V}^h_0$ whose face means vanish on the entire boundary. To recover
the constant mode, choose a face $F_N\subset\Gamma\suprm{N}\cup\Gamma\suprm{E}$ and let
$\psi_{F_N}$ be its scalar Crouzeix--Raviart basis function, normalised to
have unit face mean. Then
$\bm{v}_N:=\psi_{F_N}\bfn_{F_N}$ belongs to $\vec{V}^h$ and the broken
divergence theorem gives
$\int_\Omega\nabla\cdot\bm{v}_N\dO=\vert F_N\vert\neq0$.
For any $q_h\in Q^h$, scale $\bm{v}_N$ so that its divergence has the same
mean as $q_h$; the difference belongs to $Q^h_0$ and is the divergence of
a field in $\vec{V}^h_0$. Adding the two fields proves
\eqref{eq:discrete-divergence-surjectivity}.
\end{proof}

Elementwise constancy of the multiplier residual gives exact
complementarity, Theorem~\ref{thm:stokes-exact-complementarity}, and the
surjectivity of Lemma~\ref{lem:cr-divergence-range} gives feasibility and
nondegenerate pressure coupling. Exact-divergence conforming pairs such as
Scott--Vogelius pairs \cite{ScVo85} are also divergence compatible, but for
nonconstant pressure spaces the positive-part residual need not belong to
the test space.

The space $\vec{V}^h$ is not contained in $V$, and the strain-based form is
not stable on it, the discrete Korn inequality failing for nonconforming
piecewise linears \cite{Fa91}. We therefore use the stabilised form of
\cite{HaLa03},
\begin{equation}\label{eq:cr-stabilized-form}
a_h(\bm{u},\bm{v}) := \sum_{T\in\mathcal{T}^h}
\left(\bm{\sigma}(\bm{u},0),\bfeps(\bm{v})\right)_T
+ 2\mu\gamma_1\sum_{F}\left(h_F^{-1}\jump{\bm{u}},\jump{\bm{v}}\right)_F ,
\qquad h_F := \frac{\vert T^+\vert+\vert T^-\vert}{2\vert F\vert}
\end{equation}
the sum running over the interior faces and those on $\Gamma\suprm{D}$ and
$\Gamma\suprm{E}$, the jump being the trace on a boundary face, replaced on
$\Gamma\suprm{E}$ by its tangential part $(\bfI-\bfn\otimes\bfn)\bm{v}$. This
is the discontinuous Galerkin method of \cite{HaLa03}
restricted to $\vec{V}^h$: the Nitsche consistency terms drop out
identically, since $\bm{\sigma}(\bm{v},0)$ is elementwise constant for
piecewise linears while the Crouzeix--Raviart jump has vanishing mean on
every face. The penalty
restores control, and no threshold on
$\gamma_1$ is involved, in contrast with methods of Nitsche type: the
consistency terms that would carry a negative sign have dropped out, so
the penalty only adds. In two dimensions the elementwise term is bounded
below by $\tfrac{2\mu}{3}\|\bfeps(\bm{v})\|_T^2$, by the argument of
\eqref{eq:stokes-deviatoric-coercivity} applied elementwise, and the piecewise Korn inequality of
Brenner \cite{Br04} then gives, for every $\gamma_1>0$,
\begin{equation}\label{eq:cr-coercivity}
a_h(\bm{v},\bm{v})\ \geq\ c(\gamma_1)\,\|\bm{v}\|_h^2,\qquad
\|\bm{v}\|_h^2 := \sum_{T}\|\bfeps(\bm{v})\|_T^2
+\sum_{F}h_F^{-1}\|\jump{\bm{v}}\|_F^2
\end{equation}
with $c(\gamma_1)>0$ independent of $h$ but degenerating as
$\gamma_1\to 0$, where the elementwise kernel becomes nontrivial by
\cite{Fa91}; the degeneration is visible in the computations of Section
\ref{subsec:constitutive-discretization}. (The analysis of \cite{HaLa03} takes the first Lam\'e
parameter nonnegative; the elementwise bound is what carries it to
$-2\mu/3$.) In three dimensions the elementwise term is
$2\mu\Vert\bfeps^{\rm dev}(\bm{v})\Vert_T^2$, whose elementwise kernel is
the larger space of conformal Killing fields. The general
three-dimensional discrete trace-free Korn framework is that of Williams
and Hong \cite{WiHo24}; Appendix~\ref{sec:broken-korn-appendix} gives a
short proof of the piecewise linear case needed here, assuming
face-connected vertex patches and a relatively open planar patch in
$\Gamma\suprm{D}$. Consequently, \eqref{eq:cr-coercivity} holds, for every $\gamma_1>0$, in
three dimensions as well, and the
constitutive law \eqref{eq:deviatoric-cauchy-stress} is available on this pair. The penalty is
weakly consistent, vanishing on $V$.

\subsection{Exact Complementarity and Discrete Solvability}
\label{subsec:stokes-discrete-solvability}

The finite element method based on \eqref{eq:stokes-augmented-form} is to find
$(\bm{u}^h,p^h)\in \vec{V}^h_{\bm{g}}\times Q^{h}$ such that
\begin{equation}\label{eq:stokes-discrete-momentum}
a_h(\bm{u}^h,\bm{v}) -\int_{\Omega}\frac{1}{\gamma}[\gamma p^h-\nabla\cdot\bm{u}^h ]_+ \nabla\cdot\bm{v}\dO = L(\bm{v}) \quad \forall \bm{v}\in\vec{V}^h
\end{equation}
and
\begin{equation}\label{eq:stokes-discrete-multiplier}
\int_{\Omega}\left(\frac{1}{\gamma}[\gamma p^h-\nabla\cdot\bm{u}^h ]_+- p^h\right)q\dO =0  , \quad \forall q\in Q^{h}
\end{equation}
Because $\nabla\cdot\bm{u}^h$ and $p^h$ are both elementwise constant,
\eqref{eq:stokes-discrete-multiplier} is
the pointwise
identity $p^h = \gamma^{-1}[\gamma p^h-\nabla\cdot\bm{u}^h]_+$ on every
element.

Freezing the sign of $\gamma p^h-\nabla\cdot\bm{u}^h$ at the quadrature
points, as in Section~\ref{sec:reynolds-cavitation}, and writing $\mathcal{A}$ for the
subset of $\Omega$ on which the positive part is active, with ties assigned
to $\mathcal{A}$, the resulting
linear problem is: find $(\bm{u}^h,p^h)$ with
\begin{align}
a_h(\bm{u}^h,\bm{v}) + \gamma^{-1}(\nabla\cdot\bm{u}^h,\nabla\cdot\bm{v})_{\mathcal{A}}
- (p^h,\nabla\cdot\bm{v})_{\mathcal{A}} ={}& L(\bm{v}) \label{eq:stokes-frozen-momentum}\\
-(\nabla\cdot\bm{u}^h,q)_{\mathcal{A}} - \gamma(p^h,q)_{\Omega\setminus\mathcal{A}} ={}& 0
\label{eq:stokes-frozen-multiplier}
\end{align}
for all $(\bm{v},q)\in\vec{V}^h\times Q^h$. Unlike its Reynolds
counterpart \eqref{eq:reynolds-nodal-system} this is a genuine saddle point problem, and
its solvability rests on precisely the nondegeneracy of the
pressure--velocity coupling of the underlying Stokes pair.

The following result is the elementwise counterpart of
Theorem~\ref{thm:reynolds-nodal-structure}; it uses elementwise closure of
the multiplier residual rather than surjectivity of the divergence.

\begin{thm}\label{thm:stokes-exact-complementarity}
Let $\mathcal{A}$ be any subset of the elements and let $(\bm{u}^h,p^h)$
be any solution of \eqref{eq:stokes-frozen-momentum}--\eqref{eq:stokes-frozen-multiplier}. Then
\begin{equation}
(\nabla\cdot\bm{u}^h)_T = 0 \ \ \text{for }T\in\mathcal{A},
\qquad
p^h_T = 0 \ \ \text{for }T\notin\mathcal{A}
\label{eq:stokes-active-set-structure}
\end{equation}
so that the product $p^h\,\nabla\cdot\bm{u}^h$ vanishes identically, on
every element and for every $\mathcal{A}$. The test argument
$\gamma p^h_T-(\nabla\cdot\bm{u}^h)_T$ reduces to $\gamma p^h_T$ on
$\mathcal{A}$ and to $-(\nabla\cdot\bm{u}^h)_T$ off it. At a fixed point,
membership therefore gives $p^h_T\geq0$ on $\mathcal{A}$ and
$(\nabla\cdot\bm{u}^h)_T>0$ off it, because the test fails there. Neither
the frozen solutions nor the fixed point depend on $\gamma$, and the fixed
point satisfies the discrete Kuhn--Tucker
conditions $p^h\geq 0$, $\nabla\cdot\bm{u}^h\geq 0$,
$p^h\,\nabla\cdot\bm{u}^h = 0$ exactly, elementwise; in particular its
pressure lies in the continuous constraint set $Q_+$.
\end{thm}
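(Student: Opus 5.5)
The plan is to read the structure directly off the multiplier equation \eqref{eq:stokes-frozen-multiplier}, exploiting that both $\nabla\cdot\bm{u}^h$ and $p^h$ lie in $Q^h$. The proof then follows that of Theorem~\ref{thm:reynolds-nodal-structure}, with nodes replaced by elements. Since $\bm{u}^h$ is piecewise linear, $\nabla\cdot\bm{u}^h$ is elementwise constant, and $\mathcal{A}$ is a union of elements. Both terms in \eqref{eq:stokes-frozen-multiplier} are therefore integrals of elementwise constants against $q\in Q^h$.

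Testing \eqref{eq:stokes-frozen-multiplier} with $q=\chi_T$, the indicator of a single element $T$, gives two cases. For $T\in\mathcal{A}$ it yields $-\vert T\vert(\nabla\cdot\bm{u}^h)_T=0$. For $T\notin\mathcal{A}$ it yields $-\gamma\vert T\vert p^h_T=0$. This is \eqref{eq:stokes-active-set-structure}, and the product $p^h\nabla\cdot\bm{u}^h$ vanishes on every element. Substituting into $\gamma p^h_T-(\nabla\cdot\bm{u}^h)_T$ gives the stated reduction: $\gamma p^h_T$ on $\mathcal{A}$ and $-(\nabla\cdot\bm{u}^h)_T$ off it.

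At a fixed point the active set is reproduced by the sign test, with ties assigned to $\mathcal{A}$. For $T\in\mathcal{A}$ the test $\gamma p^h_T\geq 0$ holds, so $p^h_T\geq0$. For $T\notin\mathcal{A}$ the test fails, so $-(\nabla\cdot\bm{u}^h)_T<0$. Combined with \eqref{eq:stokes-active-set-structure}, this gives $p^h\geq0$, $\nabla\cdot\bm{u}^h\geq0$ and $p^h\nabla\cdot\bm{u}^h=0$ elementwise. A nonnegative piecewise constant lies in $Q_+$.

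For $\gamma$-independence, I use the structure just derived to rewrite the frozen system without $\gamma$. The term $\gamma^{-1}(\nabla\cdot\bm{u}^h,\nabla\cdot\bm{v})_{\mathcal{A}}$ vanishes because $\nabla\cdot\bm{u}^h=0$ on $\mathcal{A}$. The multiplier equation is equivalent to the $\gamma$-free constraints $\nabla\cdot\bm{u}^h|_{\mathcal{A}}=0$ and $p^h|_{\Omega\setminus\mathcal{A}}=0$. Hence any solution solves the $\gamma$-free system
\[
a_h(\bm{u}^h,\bm{v})-(p^h,\nabla\cdot\bm{v})_{\mathcal{A}}=L(\bm{v}),\qquad \nabla\cdot\bm{u}^h=0 \text{ on }\mathcal{A},\qquad p^h=0 \text{ off }\mathcal{A}.
\]
Conversely, every solution of this system solves the frozen system for any $\gamma$. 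So the solution set of the frozen system does not depend on $\gamma$. The fixed-point conditions are also expressed in the $\gamma$-free quantities $\operatorname{sign} p^h_T$ and $\operatorname{sign}(\nabla\cdot\bm{u}^h)_T$, so fixed points do not depend on $\gamma$ either.

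The main obstacle I anticipate is keeping the claims within what the statement asserts. The theorem speaks of ``any solution'', so it needs no solvability or uniqueness; those would come from \eqref{eq:cr-coercivity} and Lemma~\ref{lem:cr-divergence-range}, and I would not invoke them here. I also need to justify that $\mathcal{A}$ is a union of whole elements, since the sign test is elementwise because its argument is piecewise constant. The tie convention must be handled carefully so that the strict inequality off $\mathcal{A}$ is correct.
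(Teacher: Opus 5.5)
Your proposal is correct and follows the paper's proof. Testing the frozen multiplier equation with the indicator of a single element gives the active-set structure, and the reduced test argument combined with the tie convention then gives the fixed-point sign conditions. Your explicit rewriting of the frozen system in $\gamma$-free form justifies the $\gamma$-independence of the frozen solutions more fully than the paper, which only remarks that $\gamma$ enters neither reduced form.
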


\begin{proof}
Since $\nabla\cdot\bm{u}^h$ and $q$ are elementwise constant, \eqref{eq:stokes-frozen-multiplier}
tested with the characteristic function of a single element $T$ reads
$-(\nabla\cdot\bm{u}^h)_T\vert T\vert = 0$ for $T\in\mathcal{A}$ and
$-\gamma p^h_T\vert T\vert = 0$ otherwise, which is the first assertion, and
on each element one factor of the product therefore vanishes. The
reduction of the test argument follows at once, and $\gamma$ enters
neither reduced form. A fixed point is a solution for which the
membership test is satisfied on every element: $p^h_T\geq 0$ on
$\mathcal{A}$, where $\nabla\cdot\bm{u}^h$ vanishes, and, off
$\mathcal{A}$, failure of the test, which with $p^h_T = 0$ there reads
$(\nabla\cdot\bm{u}^h)_T>0$.
\end{proof}

\begin{rem}\label{rem:continuous-pressure-contrast}
With a continuous pressure space
\eqref{eq:stokes-discrete-multiplier} says only that
$\gamma^{-1}[\gamma p^h-\nabla\cdot\bm{u}^h]_+-p^h$ is $L_2$ orthogonal to
$Q^h$, not that it vanishes; the discrete pressure need not be nonnegative,
and residual violations can survive. Here elementwise constancy of the
multiplier residual turns orthogonality into a pointwise identity, Lemma~\ref{lem:complementarity-map} applies
elementwise, and the violation is not merely small but absent. In a Taylor--Hood computation of the channel
problem of Section~\ref{sec:numerical-results} the violation measured $-5.8\cdot 10^{-3}$,
$-6.3\cdot 10^{-4}$, $-6.4\cdot 10^{-5}$ and $-6.4\cdot 10^{-6}$ for
$\gamma = 10, 10^2, 10^3, 10^4$, successive ratios $9.2$, $9.9$ and
$10.0$. The Stokes
model thereby acquires the same structure the Reynolds model has in Section
\ref{subsec:reynolds-nodal-method}: complementarity conditions that hold exactly, and a
method from which the augmentation parameter has disappeared.
\end{rem}

\begin{thm}\label{thm:stokes-discrete-solvability}
Suppose $\vert\Gamma\suprm{D}\vert>0$, let $a_h(\cdot,\cdot)$ be positive
definite on $\vec{V}^h$, and let the pair
$(\vec{V}^h,Q^h)$ be nondegenerate, in the sense that
$(q,\nabla\cdot\bm{v})_\Omega = 0$ for all $\bm{v}\in\vec{V}^h$ implies
$q=0$ for $q\in Q^h$. Then \eqref{eq:stokes-frozen-momentum}--\eqref{eq:stokes-frozen-multiplier} has exactly one
solution, for every $\gamma>0$ and every $\mathcal{A}$. For
\eqref{eq:cr-stabilized-form} the first hypothesis holds for every
$\gamma_1>0$, in two dimensions by \eqref{eq:cr-coercivity} and in three by
Corollary~\ref{cor:three-dimensional-coercivity}; it holds for the full-gradient form as well, since an elementwise constant field with continuous
face means vanishing on $\Gamma\suprm{D}$ is zero. For the
Crouzeix--Raviart pair the second hypothesis follows from
\eqref{eq:discrete-divergence-surjectivity}.
\end{thm}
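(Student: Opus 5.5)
The system \eqref{eq:stokes-frozen-momentum}--\eqref{eq:stokes-frozen-multiplier} is square and linear in the finite-dimensional space $\vec{V}^h\times Q^h$, with the affine shift from $\bm{g}$ moved to the right-hand side. Existence therefore follows from uniqueness, and it suffices to show that the homogeneous problem, with $L=0$ and $\bm{g}=\bm{0}$, has only the trivial solution.

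Let $(\bm{u},p)$ solve the homogeneous system. As in the proof of Theorem~\ref{thm:stokes-exact-complementarity}, testing \eqref{eq:stokes-frozen-multiplier} with the characteristic function of a single element gives $(\nabla\cdot\bm{u})_T=0$ for $T\in\mathcal{A}$ and $p_T=0$ for $T\notin\mathcal{A}$. Next take $\bm{v}=\bm{u}$ in \eqref{eq:stokes-frozen-momentum}. The term $\gamma^{-1}(\nabla\cdot\bm{u},\nabla\cdot\bm{u})_{\mathcal{A}}$ vanishes, and so does $(p,\nabla\cdot\bm{u})_{\mathcal{A}}$, since $\nabla\cdot\bm{u}=0$ on $\mathcal{A}$. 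We are left with $a_h(\bm{u},\bm{u})=0$, so $\bm{u}=\bm{0}$ by positive definiteness. The momentum equation then reduces to $(p,\nabla\cdot\bm{v})_{\mathcal{A}}=0$ for all $\bm{v}\in\vec{V}^h$. Because $p$ vanishes off $\mathcal{A}$, this equals $(p,\nabla\cdot\bm{v})_\Omega=0$, and nondegeneracy gives $p=0$. No step uses the value of $\gamma$ or any property of $\mathcal{A}$, which gives the claim for every $\gamma>0$ and every $\mathcal{A}$.

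For the specific hypotheses:
\begin{itemize}
\item \emph{Positive definiteness of \eqref{eq:cr-stabilized-form}.} This is \eqref{eq:cr-coercivity} in two dimensions and Corollary~\ref{cor:three-dimensional-coercivity} in three, together with the observation that $\|\cdot\|_h$ is a norm on $\vec{V}^h$. If $\|\bm{v}\|_h=0$, then $\bfeps(\bm{v})=0$ elementwise and all penalised jumps vanish, so $\bm{v}$ is a globally continuous rigid motion vanishing on $\Gamma\suprm{D}$, hence zero.
\item \emph{The full-gradient form.} Vanishing of $\sum_T\|\nabla\bm{v}\|_T^2$ makes $\bm{v}$ elementwise constant. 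The continuous face means then make it globally constant on the connected mesh, and the zero face means on $\Gamma\suprm{D}$ force $\bm{v}=\bm{0}$.
\item \emph{Nondegeneracy of the Crouzeix--Raviart pair.} Given $q\in Q^h$, Lemma~\ref{lem:cr-divergence-range} supplies $\bm{v}$ with $\nabla\cdot\bm{v}=q$, and then $\|q\|^2_\Omega=0$.
\end{itemize}

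The only delicate step is the energy argument. The coupling term in \eqref{eq:stokes-frozen-momentum} is restricted to $\mathcal{A}$, while the nondegeneracy hypothesis is stated over all of $\Omega$. The elementwise structure obtained from \eqref{eq:stokes-frozen-multiplier}, namely that $p$ vanishes off $\mathcal{A}$, is what bridges the two, so it must be established before the energy test.
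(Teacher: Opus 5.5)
Your proof is correct, but it reaches the energy identity by a different route from the paper's. The paper does not test \eqref{eq:stokes-frozen-multiplier} element by element. It takes $q=p^h$ there and $\bm{v}=\bm{u}^h$ in \eqref{eq:stokes-frozen-momentum} and combines the two, obtaining
\[
a_h(\bm{u}^h,\bm{u}^h)+\gamma^{-1}\|\nabla\cdot\bm{u}^h\|^2_{\mathcal{A}}+\gamma\|p^h\|^2_{\Omega\setminus\mathcal{A}}=0 .
\]
From this, $\bm{u}^h=\bm{0}$ and $p^h=0$ off $\mathcal{A}$ follow together, and the nondegeneracy step is then the same as yours.

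Your version first extracts the complementarity structure of Theorem~\ref{thm:stokes-exact-complementarity}. That makes the energy test trivial and gives the structure as a by-product. However, it relies on three facts: the characteristic functions of single elements lie in $Q^h$, $\nabla\cdot\vec{V}^h\subset Q^h$, and $\mathcal{A}$ is a union of whole elements. All three hold for the Crouzeix--Raviart/$\mathbb{P}_0$ pair, whose switching argument is elementwise constant, so there is no gap in this setting.

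Two of your remarks are inaccurate, though. It is not true that no step uses any property of $\mathcal{A}$. Nor must $p^h=0$ off $\mathcal{A}$ be established before the energy test: the paper's combined test delivers it through the term $\gamma\|p^h\|^2_{\Omega\setminus\mathcal{A}}$. The paper's route is therefore the more general one. It uses only the two abstract hypotheses, so it applies unchanged to other pairs, such as Taylor--Hood, and to active sets that are not unions of elements.

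Your checks of the hypotheses are correct and fill in what the paper only asserts in the statement. These are the definiteness of $\|\cdot\|_h$, the full-gradient case, and nondegeneracy via Lemma~\ref{lem:cr-divergence-range}.
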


\begin{proof}
The problem is square, so it suffices to treat $L=0$ and show that
$(\bm{u}^h,p^h)=(\bm{0},0)$. Take $\bm{v}=\bm{u}^h$ in \eqref{eq:stokes-frozen-momentum} and
$q=p^h$ in \eqref{eq:stokes-frozen-multiplier}. The second gives
$(\nabla\cdot\bm{u}^h,p^h)_{\mathcal{A}}
= -\gamma\|p^h\|^2_{\Omega\setminus\mathcal{A}}$, and substituting this in
the first,
\begin{equation}
a_h(\bm{u}^h,\bm{u}^h)
+ \gamma^{-1}\|\nabla\cdot\bm{u}^h\|^2_{\mathcal{A}}
+ \gamma\|p^h\|^2_{\Omega\setminus\mathcal{A}} = 0
\label{eq:stokes-frozen-energy}
\end{equation}
All three terms are nonnegative, so each vanishes. From the first and the
definiteness of $a_h(\cdot,\cdot)$, $\bm{u}^h=\bm{0}$; from the third, $p^h=0$ on
$\Omega\setminus\mathcal{A}$. Equation \eqref{eq:stokes-frozen-momentum} now reads
$(p^h,\nabla\cdot\bm{v})_{\mathcal{A}}=0$ for all $\bm{v}\in\vec{V}^h$,
and since $p^h$ vanishes off $\mathcal{A}$ the restricted pairing is the
full one, so $(p^h,\nabla\cdot\bm{v})_\Omega=0$ for all
$\bm{v}\in\vec{V}^h$. Nondegeneracy gives $p^h=0$.
\end{proof}

\begin{thm}\label{thm:stokes-nonlinear-discrete-solvability}
Under the hypotheses of Theorem~\ref{thm:stokes-discrete-solvability},
assume also \eqref{eq:discrete-divergence-surjectivity}. Then the nonlinear
system \eqref{eq:stokes-discrete-momentum}--\eqref{eq:stokes-discrete-multiplier}
has a unique solution for every $\gamma>0$.
\end{thm}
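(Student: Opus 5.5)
The plan is to reduce the nonlinear system \eqref{eq:stokes-discrete-momentum}--\eqref{eq:stokes-discrete-multiplier} to a finite-dimensional convex minimisation, in the same way that Theorem~\ref{thm:stokes-well-posedness} reduced the continuous problem. We set
\[
\mathcal{K}^h := \{\bm{v}\in\vec{V}^h_{\bm{g}}:\ \nabla\cdot\bm{v}\geq 0 \text{ on every element}\}
\]
and consider the problem of minimising $\tfrac12a_h(\bm{v},\bm{v})-L(\bm{v})$ over $\mathcal{K}^h$.

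\textbf{Step 1 (existence and uniqueness of the velocity).}
\begin{itemize}
\item $\mathcal{K}^h$ is a closed convex polyhedral subset of an affine finite-dimensional space.
\item It is nonempty. Take any $\bm{w}\in\vec{V}^h_{\bm{g}}$; by \eqref{eq:discrete-divergence-surjectivity} there is $\bm{z}\in\vec{V}^h$ with $\nabla\cdot\bm{z}=c_0-\nabla\cdot\bm{w}$ for a constant $c_0>0$. Then $\bm{w}+\bm{z}$ has divergence $c_0>0$, so it is even strictly feasible.
\item Since $a_h$ is positive definite on $\vec{V}^h$, the functional is strictly convex and coercive on the affine space.
\end{itemize}
Hence there is a unique minimiser $\bm{u}^h$.

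\textbf{Step 2 (multiplier and Kuhn--Tucker conditions).} The constraints are finitely many linear inequalities $(\nabla\cdot\bm{v})_T\geq 0$, so no constraint qualification beyond linearity is needed; Slater's condition holds anyway by Step 1. Finite-dimensional KKT theory gives nonnegative elementwise multipliers, that is, $p^h\in Q^h$ with
\[
p^h\geq 0,\qquad p^h\,\nabla\cdot\bm{u}^h=0 \text{ elementwise},\qquad
a_h(\bm{u}^h,\bm{v})-(p^h,\nabla\cdot\bm{v})_\Omega=L(\bm{v})\quad\forall\bm{v}\in\vec{V}^h .
\]
The multiplier is unique by nondegeneracy of the pair, which is the second hypothesis of Theorem~\ref{thm:stokes-discrete-solvability}. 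Because $p^h$ and $\nabla\cdot\bm{u}^h$ are elementwise constant, Lemma~\ref{lem:complementarity-map} applies on each element and yields $p^h=\gamma^{-1}[\gamma p^h-\nabla\cdot\bm{u}^h]_+$. This identity gives \eqref{eq:stokes-discrete-multiplier}, and substituting it into the equilibrium equation gives \eqref{eq:stokes-discrete-momentum}, for every $\gamma>0$.

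\textbf{Step 3 (uniqueness for the nonlinear system).} Let $(\bm{u}^h,p^h)$ be any solution of the nonlinear system.
\begin{itemize}
\item Test \eqref{eq:stokes-discrete-multiplier} with elementwise characteristic functions. Since everything is elementwise constant, this gives the pointwise identity $p^h=\gamma^{-1}[\gamma p^h-\nabla\cdot\bm{u}^h]_+$ on each element.
\item Lemma~\ref{lem:complementarity-map} then gives the discrete Kuhn--Tucker conditions, and \eqref{eq:stokes-discrete-momentum} becomes the equilibrium equation of Step 2.
\item For $\bm{v}\in\mathcal{K}^h$ we get $a_h(\bm{u}^h,\bm{v}-\bm{u}^h)-L(\bm{v}-\bm{u}^h)=(p^h,\nabla\cdot\bm{v})_\Omega\geq 0$. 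So $\bm{u}^h$ satisfies the variational inequality characterising the minimiser, and is the unique one from Step 1.
\item Nondegeneracy then identifies $p^h$ with the multiplier of Step 2.
\end{itemize}

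The main point needing care is Step 2: showing that the KKT multiplier can be taken in $Q^h$ itself, one value per element. This works because the constraints are indexed exactly by elements, paired with $Q^h$ through $(q,\nabla\cdot\bm{v})_\Omega$. Each elementwise multiplier equals $|T|p^h_T$, so one must track the measure weighting when writing $p^h$.

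One must also check that the essential data on $\Gamma\suprm{D}$, $\Gamma\suprm{S}$ and $\Gamma\suprm{E}$ do not obstruct surjectivity in Step 1. This is handled by applying \eqref{eq:discrete-divergence-surjectivity} to the homogeneous space $\vec{V}^h$ and adding a fixed lifting of the data.
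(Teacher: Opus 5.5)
Your proof is correct and follows the paper's argument. Both obtain feasibility of $\mathcal{K}^h$ by applying surjectivity to a lifting, get a unique minimiser from strict convexity, take the finite-dimensional Kuhn--Tucker multipliers in $Q^h$ and identify them with the nonlinear system through Lemma~\ref{lem:complementarity-map}, and deduce uniqueness of the pressure from its pairing with discrete divergences. Your Step~3 spells out the converse (any solution satisfies the variational inequality and is therefore the minimiser), which the paper compresses into a single sentence.
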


\begin{proof}
Choose a discrete lifting $\bm{w}^h\in\vec{V}^h_{\bm{g}}$. Surjectivity
provides $\bm{z}^h\in\vec{V}^h$ with
$\nabla\cdot\bm{z}^h=1-\nabla\cdot\bm{w}^h$, so the set
\begin{equation}\label{eq:stokes-discrete-feasible-set}
\mathcal{K}^h := \{\bm{v}\in\vec{V}^h_{\bm{g}}:
\nabla\cdot\bm{v}\geq0\}
\end{equation}
is nonempty and contains a strictly feasible point. Positive definiteness
of $a_h$ on the homogeneous space makes
$\tfrac12a_h(\bm{v},\bm{v})-L(\bm{v})$ strictly convex and coercive
on this finite-dimensional affine space. It therefore has a unique
minimiser over the closed convex set $\mathcal{K}^h$.
The finite-dimensional Kuhn--Tucker conditions give a nonnegative
$p^h\in Q^h$ satisfying equilibrium and elementwise complementarity.
If two pressures correspond to the same velocity, their difference pairs
to zero with every discrete divergence; surjectivity makes them equal.
Lemma~\ref{lem:complementarity-map} now identifies these conditions with
\eqref{eq:stokes-discrete-momentum}--\eqref{eq:stokes-discrete-multiplier}
for every $\gamma>0$. Conversely, the elementwise multiplier equation
implies precisely these Kuhn--Tucker conditions.
\end{proof}

\subsection{Consistency and Error Estimate}
\label{subsec:stokes-consistency}

\begin{thm}\label{thm:stokes-consistency}
Let $(\bm{u},p)$ be as in Theorem~\ref{thm:stokes-well-posedness}, with
$\bm{u}\in[H^2(\Omega)]^n$ and $p\in H^1(\Omega)$. Then $(\bm{u},p)$
satisfies \eqref{eq:stokes-discrete-multiplier} exactly, for all $q\in Q^h$ and every $\gamma>0$, and
satisfies \eqref{eq:stokes-discrete-momentum} up to the nonconformity term
\begin{equation}\label{eq:stokes-consistency-residual}
\begin{aligned}
E_h(\bm{v}) := {}&\sum_{F}\left(\bm{\sigma}(\bm{u},p)\cdot\bfn
-\overline{\bm{\sigma}(\bm{u},p)\cdot\bfn},\ \jump{\bm{v}}\right)_F
\\
&+\sum_{F\subset\Gamma\suprm{S}}\left(
\bfn\cdot\bm{\sigma}(\bm{u},p)\cdot\bfn
-\overline{\bfn\cdot\bm{\sigma}(\bm{u},p)\cdot\bfn},\
\ \bm{v}\cdot\bfn\right)_F
\\
&+\sum_{F\subset\Gamma\suprm{E}}\left(
(\bfI-\bfn\otimes\bfn)\bm{\sigma}(\bm{u},p)\bfn
-\overline{(\bfI-\bfn\otimes\bfn)\bm{\sigma}(\bm{u},p)\bfn},\
(\bfI-\bfn\otimes\bfn)\bm{v}\right)_F
\\
&+2\mu\gamma_1\sum_{F\subset\Gamma\suprm{D}}
\left(h_F^{-1}\left(\bm{g}-\overline{\bm{g}}\right),\jump{\bm{v}}\right)_F
\end{aligned}
\end{equation}
for $\bm{v}\in\vec{V}^h$, the first sum running over the interior faces
and those on $\Gamma\suprm{D}$, the bar denoting the mean over the face.
The second and third sums are the traction fluctuations on $\Gamma\suprm{S}$
and $\Gamma\suprm{E}$, and the last term is the penalty evaluated at the
data; it vanishes
whenever $\bm{g}$ is constant on each face of $\Gamma\suprm{D}$, as it is
in every pit computation of Section~\ref{sec:numerical-results}. Define
\begin{equation}\label{eq:stokes-boundary-data-oscillation}
G_h(\bm{g}) := \left(\sum_{F\subset\Gamma\suprm{D}}
h_F^{-1}\|\bm{g}-\overline{\bm{g}}\|_F^2\right)^{1/2}
\end{equation}
Then, in the norm of \eqref{eq:cr-coercivity},
\begin{equation}\label{eq:stokes-consistency-bound}
|E_h(\bm{v})| \leq
\left[Ch\left(|\bm{u}|_{H^2(\Omega)}+|p|_{H^1(\Omega)}\right)
+2\mu\gamma_1G_h(\bm{g})\right]\|\bm{v}\|_h
\end{equation}
where $C$ is independent of $h$. In particular the first-order bound
holds when the Dirichlet data are constant on each boundary face.
\end{thm}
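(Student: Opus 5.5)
The multiplier equation is immediate. By Theorem~\ref{thm:stokes-well-posedness}, $(\bm{u},p)$ satisfies \eqref{eq:stokes-complementarity}, so Lemma~\ref{lem:complementarity-map} applied pointwise gives $\gamma^{-1}[\gamma p-\nabla\cdot\bm{u}]_+=p$ almost everywhere, for every $\gamma>0$. The integrand of \eqref{eq:stokes-discrete-multiplier} therefore vanishes identically, for every $q\in Q^h$. The same identity replaces the positive part in \eqref{eq:stokes-discrete-momentum} by $p$, so it remains to compute
\[
a_h(\bm{u},\bm{v})-(p,\nabla\cdot\bm{v})_\Omega-L(\bm{v})
\]
for $\bm{v}\in\vec{V}^h$.

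For this, integrate by parts elementwise. Using the strong equation $-\nabla\cdot\bm{\sigma}(\bm{u},p)=\bm{f}$, valid in $L_2$ because $\bm{u}\in H^2$ and $p\in H^1$, the volume terms cancel $(\bm{f},\bm{v})$. What remains is the sum of boundary terms $(\bm{\sigma}\bfn,\bm{v})_{\partial T}$. On interior faces, continuity of the traction lets us regroup these as $(\bm{\sigma}\bfn,\jump{\bm{v}})_F$. On $\Gamma\suprm{N}$ they cancel the traction part of $L$. On $\Gamma\suprm{S}$ split $\bm{v}$ into normal and tangential components: the tangential traction vanishes, leaving $(\bfn\cdot\bm{\sigma}\bfn,\bm{v}\cdot\bfn)_F$. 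On $\Gamma\suprm{E}$ the normal traction $-p\subrm{b}$ cancels the $p\subrm{b}$ term of $L$, leaving the tangential pairing. On $\Gamma\suprm{D}$ the term is $(\bm{\sigma}\bfn,\bm{v})_F$, with $\bm{v}$ as the jump.

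In each case the test factor has vanishing face mean by the constraints in \eqref{eq:cr-velocity-space}: the interior jump, the trace on $\Gamma\suprm{D}$, the normal component on $\Gamma\suprm{S}$, and the tangential component on $\Gamma\suprm{E}$. We may therefore subtract the face mean of the stress factor, which produces the first three sums of $E_h$. For $\Gamma\suprm{S}$ and $\Gamma\suprm{E}$ we use that $\bfn$ is constant on a flat face, so the projected components have zero mean. The penalty term of $a_h$ evaluated at $\bm{u}$ vanishes on interior faces and on $\Gamma\suprm{E}$, since $\bm{u}\in H^1$ with the tangential trace zero there. On $\Gamma\suprm{D}$ the trace jump is $\bm{g}$; because $\jump{\bm{v}}$ has zero mean there, $\bm{g}$ may be replaced by $\bm{g}-\overline{\bm{g}}$, which gives the last term. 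The elementwise viscous form needs no extra correction, since $\bm{u}$ is conforming.

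For the bound \eqref{eq:stokes-consistency-bound}, handle each face term by Cauchy--Schwarz with weights $h_F^{1/2}$ and $h_F^{-1/2}$:
\[
|(\phi-\overline\phi,w)_F|\le h_F^{1/2}\|\phi-\overline\phi\|_F\,h_F^{-1/2}\|w\|_F .
\]
Here $w$ is the jump, or a projected trace, so it is bounded by $\|\jump{\bm{v}}\|_F$ or $\|\bm{v}\|_F$. On $\Gamma\suprm{S}$, however, $\bm{v}\cdot\bfn$ is not a penalised jump. There I would instead bound $\|w-\overline{w}\|_F\le Ch_F^{1/2}\|\nabla\bm{v}\|_T$ and then $\|\nabla\bm{v}\|_T$ by the norm of \eqref{eq:cr-coercivity}. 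That last step uses the broken Korn inequality, Brenner in two dimensions and the appendix in three, together with the fact that the face term is insensitive to constants. The stress factor is handled with a trace inequality on a neighbouring element combined with Poincaré on the face, in the form
\[
\|\phi-\overline\phi\|_F\le\|\phi-\overline\phi^T\|_F\le Ch_T^{1/2}|\phi|_{H^1(T)},
\]
with $\phi$ a component of $\bm{\sigma}(\bm{u},p)$, so that $|\phi|_{H^1(T)}\le C(|\bm{u}|_{H^2(T)}+|p|_{H^1(T)})$. Summing over faces with shape regularity gives $Ch(|\bm{u}|_{H^2}+|p|_{H^1})\|\bm{v}\|_h$. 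The data term gives $2\mu\gamma_1G_h(\bm{g})\|\bm{v}\|_h$ directly.

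The main obstacle is the bookkeeping on $\Gamma\suprm{S}$ and $\Gamma\suprm{E}$, where only some components of $\bm{v}$ have vanishing face mean. The key check is that exactly those components are paired with the stress fluctuations, and that any component not controlled by the penalty is still bounded by $\|\bm{v}\|_h$ through the broken Korn inequality.
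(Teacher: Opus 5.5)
Your proposal is correct and follows the same route as the paper. Lemma~\ref{lem:complementarity-map} replaces the positive part by $p$. Elementwise integration by parts, with the strong equation and the natural boundary conditions, leaves face terms whose test factors have zero face mean in $\vec{V}^h$. Cauchy--Schwarz with trace and Poincar\'e estimates then gives the bound, and the data term follows directly. On $\Gamma\suprm{S}$, $\bm{v}\cdot\bfn$ is not penalised, so it is not directly controlled by $\|\cdot\|_h$. Your treatment of that term, through a face Poincar\'e inequality and the broken Korn inequality, makes explicit what the paper's brief appeal to ``trace inequalities'' leaves implicit.
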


\begin{proof}
By \eqref{eq:stokes-complementarity} and Lemma~\ref{lem:complementarity-map},
$\gamma^{-1}[\gamma p-\nabla\cdot\bm{u}]_+ = p$ almost everywhere, so
\eqref{eq:stokes-discrete-multiplier} becomes $(p-p,q)_\Omega = 0$, and \eqref{eq:stokes-discrete-momentum} asks whether
$a_h(\bm{u},\bm{v})-(p,\nabla\cdot\bm{v})_\Omega = L(\bm{v})$ on
$\vec{V}^h$. In the penalty of \eqref{eq:cr-stabilized-form} the interior jumps of
$\bm{u}$ vanish; on $\Gamma\suprm{D}$ the trace of $\bm{u}$ is $\bm{g}$,
and since $\int_F\jump{\bm{v}} = \bm{0}$ for $\bm{v}\in\vec{V}^h$ the
face mean may be subtracted, which is the last term of \eqref{eq:stokes-consistency-residual}.
Integrating the elementwise form by parts and
using \eqref{eq:stokes-strong} and \eqref{eq:stokes-neumann-boundary} leaves the face terms
$\sum_F(\bm{\sigma}(\bm{u},p)\cdot\bfn,\jump{\bm{v}})_F$ over the
interior and $\Gamma\suprm{D}$ faces, the contribution on
$\Gamma\suprm{N}$ cancelling against the traction term of $L$. On a symmetry
face the tangential traction vanishes, leaving
$(\bfn\cdot\bm{\sigma}\cdot\bfn,\bm{v}\cdot\bfn)_F$; the face mean of
$\bm{v}\cdot\bfn$ is zero by \eqref{eq:cr-velocity-space}, so the mean
normal traction may be subtracted. On a face in $\Gamma\suprm{E}$ the normal
traction cancels against the end term of $L$, the tangential trace of
$\bm{u}$ vanishes, and the tangential face mean of $\bm{v}$ is zero, so the
mean tangential traction may be subtracted. These observations give the
first three terms of \eqref{eq:stokes-consistency-residual}. Trace
inequalities and approximation of the traction by its face mean give the
first-order bound for the first three sums. For the Dirichlet penalty, Cauchy--Schwarz gives
\begin{equation}\label{eq:stokes-dirichlet-penalty-bound}
\left|2\mu\gamma_1\sum_{F\subset\Gamma\suprm{D}}
(h_F^{-1}(\bm{g}-\overline{\bm{g}}),\jump{\bm{v}})_F\right|
\leq 2\mu\gamma_1G_h(\bm{g})\|\bm{v}\|_h
\end{equation}
Combining these estimates proves \eqref{eq:stokes-consistency-bound}.
\end{proof}

The exact discrete complementarity also yields an error estimate. Let $I_h$
be the Crouzeix--Raviart interpolant, defined by
$\int_F I_h\bm{v} = \int_F\bm{v}$ on every face, and let $\Pi_0$ be the
$L_2$ projection onto $Q^h$. The divergence theorem on each element gives
\begin{equation}\label{eq:cr-interpolant-divergence}
\nabla\cdot(I_h\bm{v})\vert_T = (\Pi_0\nabla\cdot\bm{v})\vert_T
\qquad\forall T\in\mathcal{T}^h
\end{equation}
so the interpolant of a feasible velocity is discretely feasible.

\begin{thm}\label{thm:stokes-error}
Let the assumptions of Theorem~\ref{thm:stokes-consistency} hold and let
$(\bm{u}^h,p^h)$ be the solution of
\eqref{eq:stokes-discrete-momentum}--\eqref{eq:stokes-discrete-multiplier}.
Then, with the norm of \eqref{eq:cr-coercivity} extended to
$[H^2(\Omega)]^n+\vec{V}^h$,
\begin{equation}\label{eq:stokes-error-estimate}
\|\bm{u}-\bm{u}^h\|_h+\|p-p^h\|_\Omega\leq
C\left[h\left(|\bm{u}|_{H^2(\Omega)}+|p|_{H^1(\Omega)}\right)
+G_h(\bm{g})\right]
\end{equation}
where $C$ is independent of $h$ and $\gamma$.
\end{thm}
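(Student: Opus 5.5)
We compare the discrete solution with the interpolant $I_h\bm{u}$ and the projection $\Pi_0 p$, and bound the discrete errors $\bm{e}:=I_h\bm{u}-\bm{u}^h\in\vec{V}^h$ (modulo a lifting of the data, which $I_h$ reproduces face by face) and $r:=\Pi_0p-p^h\in Q^h$. By Theorem~\ref{thm:stokes-nonlinear-discrete-solvability} and Theorem~\ref{thm:stokes-exact-complementarity}, the pair $(\bm{u}^h,p^h)$ satisfies the discrete Kuhn--Tucker conditions $p^h\geq0$, $\nabla\cdot\bm{u}^h\geq0$, $p^h\nabla\cdot\bm{u}^h=0$ elementwise, together with
\begin{equation*}
a_h(\bm{u}^h,\bm{v})-(p^h,\nabla\cdot\bm{v})_\Omega=L(\bm{v})
\end{equation*}
for all $\bm{v}\in\vec{V}^h$. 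Since $\gamma$ has dropped out there, every constant we obtain will automatically be $\gamma$-independent. By Theorem~\ref{thm:stokes-consistency}, the exact solution satisfies
\begin{equation*}
a_h(\bm{u},\bm{v})-(p,\nabla\cdot\bm{v})_\Omega=L(\bm{v})+E_h(\bm{v}).
\end{equation*}
Subtracting the two identities and inserting the interpolants gives the error equation
\begin{equation*}
a_h(\bm{e},\bm{v})-(r,\nabla\cdot\bm{v})_\Omega=a_h(I_h\bm{u}-\bm{u},\bm{v})+E_h(\bm{v}),
\end{equation*}
where $(p-\Pi_0p,\nabla\cdot\bm{v})_\Omega=0$ because $\nabla\cdot\bm{v}\in Q^h$.

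\textbf{Velocity.} Test with $\bm{v}=\bm{e}$. The key term is $-(r,\nabla\cdot\bm{e})_\Omega$, and we want to show it is nonnegative. By \eqref{eq:cr-interpolant-divergence}, $\nabla\cdot I_h\bm{u}=\Pi_0\nabla\cdot\bm{u}$, so
\begin{equation*}
(r,\nabla\cdot\bm{e})_\Omega=(\Pi_0p-p^h,\ \Pi_0\nabla\cdot\bm{u}-\nabla\cdot\bm{u}^h)_\Omega .
\end{equation*}
Expand this into four terms. First, $(\Pi_0p,\Pi_0\nabla\cdot\bm{u})=(p,\Pi_0\nabla\cdot\bm{u})$. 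Second, $(p^h,\nabla\cdot\bm{u}^h)=0$. Third, $(\Pi_0p,\nabla\cdot\bm{u}^h)=(p,\nabla\cdot\bm{u}^h)\geq0$. Fourth, $(p^h,\Pi_0\nabla\cdot\bm{u})=(p^h,\nabla\cdot\bm{u})\geq0$. Hence
\begin{equation*}
(r,\nabla\cdot\bm{e})_\Omega\leq(p,\Pi_0\nabla\cdot\bm{u})_\Omega=(p,\nabla\cdot\bm{u})_\Omega-(p-\Pi_0p,\nabla\cdot\bm{u}-\Pi_0\nabla\cdot\bm{u})_\Omega.
\end{equation*}
The first term on the right vanishes by complementarity, and the second is $O(h^2)|p|_{H^1}|\bm{u}|_{H^2}$. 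This is the exact analogue of the Reynolds argument in Theorem~\ref{thm:reynolds-error}. The sign comes out right because $(\bm{e},r)$ appears with $-(r,\nabla\cdot\bm{e})$ on the left-hand side.

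Coercivity \eqref{eq:cr-coercivity} (with Corollary~\ref{cor:three-dimensional-coercivity} in three dimensions) then gives
\begin{equation*}
c\|\bm{e}\|_h^2\leq C\|\bm{u}-I_h\bm{u}\|_h\|\bm{e}\|_h+|E_h(\bm{e})|+Ch^2|p|_{H^1}|\bm{u}|_{H^2}.
\end{equation*}
We bound $\|\bm{u}-I_h\bm{u}\|_h\leq Ch|\bm{u}|_{H^2}$ by the standard Crouzeix--Raviart interpolation estimate, use \eqref{eq:stokes-consistency-bound} for $E_h(\bm{e})$, and absorb the quadratic term with Young's inequality as $h^2(|\bm{u}|_{H^2}+|p|_{H^1})^2$. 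This yields
\begin{equation*}
\|\bm{e}\|_h\leq C\bigl[h(|\bm{u}|_{H^2}+|p|_{H^1})+G_h(\bm{g})\bigr],
\end{equation*}
and the triangle inequality transfers the bound to $\bm{u}-\bm{u}^h$.

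\textbf{Pressure.} We use the discrete inf--sup condition implied by Lemma~\ref{lem:cr-divergence-range}: the right inverse of the divergence restricted to $\vec{V}^h$ is uniformly bounded in $\|\cdot\|_h$. We would justify this by composing the bounded continuous right inverse of Lemma~\ref{lem:continuous-divergence-range} with $I_h$, using \eqref{eq:cr-interpolant-divergence} and $H^1$-stability of $I_h$. Choosing $\bm{v}$ with $\nabla\cdot\bm{v}=r$ and $\|\bm{v}\|_h\leq C\|r\|_\Omega$ in the error equation gives
\begin{equation*}
\|r\|^2_\Omega\leq\bigl(C\|\bm{e}\|_h+C\|\bm{u}-I_h\bm{u}\|_h\bigr)\|\bm{v}\|_h+|E_h(\bm{v})|,
\end{equation*}
which is of the required order. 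Finally $\|p-\Pi_0p\|_\Omega\leq Ch|p|_{H^1}$.

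\textbf{Main obstacle.} We expect the main difficulty to be the sign argument for $(r,\nabla\cdot\bm{e})$. It requires exact elementwise complementarity of the discrete solution together with the commuting property \eqref{eq:cr-interpolant-divergence}, so that the mixed terms are nonnegative and the remainder is a product of two projection errors. A secondary point is handling the data lifting on $\Gamma\suprm{D}$ consistently: $I_h\bm{u}$ and $\bm{u}^h$ share face means there, so $\bm{e}\in\vec{V}^h$, and the oscillation of $\bm{g}$ enters only through $E_h$ as $G_h(\bm{g})$. A further point is that the uniform discrete inf--sup constant holds with the mixed boundary spaces; we would handle this via the constant-mode construction in the proof of Lemma~\ref{lem:cr-divergence-range}, which is $h$-uniform after scaling by a fixed boundary patch.
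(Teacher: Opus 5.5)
Your proposal is correct and follows essentially the same route as the paper. You compare with $I_h\bm{u}$ and $\Pi_0p$. You then use exact elementwise complementarity and the commuting property \eqref{eq:cr-interpolant-divergence} to bound the pressure--divergence coupling by $(p,\Pi_0\nabla\cdot\bm{u})_\Omega=(\Pi_0p-p,\nabla\cdot\bm{u}-\Pi_0\nabla\cdot\bm{u})_\Omega=O(h^2)$. The velocity bound follows from coercivity, the consistency bound and Young's inequality, and the pressure from the uniform Crouzeix--Raviart inf--sup condition with $I_h$ as Fortin operator.

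Your four-term expansion of $(\Pi_0p-p^h,\nabla\cdot\bm{e})_\Omega$ is just the paper's two sign bounds, on $(p^h,\nabla\cdot\bm{e}^h)_\Omega$ and on $(p,\nabla_h\cdot\bm{e}^h)_\Omega$, combined into one. Strictly, you prove $-(r,\nabla\cdot\bm{e})_\Omega\geq -Ch^2|p|_{H^1(\Omega)}|\bm{u}|_{H^2(\Omega)}$ rather than nonnegativity, but that is all the argument needs.
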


\begin{proof}
The face means of $\bm{u}$ satisfy the constraints in
\eqref{eq:cr-velocity-space}, so $I_h\bm{u}\in\vec{V}^h_{\bm{g}}$, and
$\|\bm{u}-I_h\bm{u}\|_h\leq Ch|\bm{u}|_{H^2(\Omega)}$. Subtracting
\eqref{eq:stokes-discrete-momentum}, in which the positive part equals
$p^h$ by \eqref{eq:stokes-discrete-multiplier}, from the identity of
Theorem~\ref{thm:stokes-consistency} gives
\begin{equation}\label{eq:stokes-error-equation}
a_h(\bm{u}-\bm{u}^h,\bm{v})-(p,\nabla_h\cdot\bm{v})_\Omega
+(p^h,\nabla\cdot\bm{v})_\Omega = E_h(\bm{v})
\qquad\forall\bm{v}\in\vec{V}^h
\end{equation}
Put $\bm{e}^h := I_h\bm{u}-\bm{u}^h$. By
Theorem~\ref{thm:stokes-exact-complementarity} and
\eqref{eq:cr-interpolant-divergence},
$(p^h,\nabla\cdot\bm{e}^h)_\Omega=(p^h,\Pi_0\nabla\cdot\bm{u})_\Omega\geq 0$.
Since $p\geq 0$, $\nabla\cdot\bm{u}^h\geq 0$ and
$(p,\nabla\cdot\bm{u})_\Omega=0$,
\begin{equation}\label{eq:stokes-error-sign}
(p,\nabla_h\cdot\bm{e}^h)_\Omega\leq(p,\Pi_0\nabla\cdot\bm{u})_\Omega
=(\Pi_0p-p,\nabla\cdot\bm{u}-\Pi_0\nabla\cdot\bm{u})_\Omega
\leq Ch^2|p|_{H^1(\Omega)}|\bm{u}|_{H^2(\Omega)}
\end{equation}
Taking $\bm{v}=\bm{e}^h$ in \eqref{eq:stokes-error-equation} and writing
$\bm{e}^h=(I_h\bm{u}-\bm{u})+(\bm{u}-\bm{u}^h)$, the coercivity
\eqref{eq:cr-coercivity}, the boundedness of $a_h(\cdot,\cdot)$ in
$\|\cdot\|_h$ and \eqref{eq:stokes-consistency-bound} give
\begin{equation}\label{eq:stokes-error-energy}
c(\gamma_1)\|\bm{e}^h\|_h^2\leq
\left[Ch\left(|\bm{u}|_{H^2(\Omega)}+|p|_{H^1(\Omega)}\right)
+2\mu\gamma_1G_h(\bm{g})\right]\|\bm{e}^h\|_h
+Ch^2|p|_{H^1(\Omega)}|\bm{u}|_{H^2(\Omega)}
\end{equation}
and the velocity bound follows by Young's inequality and the triangle
inequality. For the pressure, \eqref{eq:stokes-error-equation} reads
$(\Pi_0p-p^h,\nabla\cdot\bm{v})_\Omega
= a_h(\bm{u}-\bm{u}^h,\bm{v})-E_h(\bm{v})$ for $\bm{v}\in\vec{V}^h$. The
Crouzeix--Raviart pair satisfies a discrete inf--sup condition uniformly in
$h$ \cite{BrFo91}, with $I_h$ as Fortin operator and
Lemma~\ref{lem:continuous-divergence-range} as its continuous counterpart,
so $\|\Pi_0p-p^h\|_\Omega$ is bounded by the right-hand side of
\eqref{eq:stokes-error-estimate}. Finally
$\|p-\Pi_0p\|_\Omega\leq Ch|p|_{H^1(\Omega)}$.
\end{proof}

\begin{rem}
The two ways of failing to be conforming have been exchanged: with a
continuous pair the spaces are conforming and the constraint is met only
weakly, while here the constraint is met exactly, by
Theorem~\ref{thm:stokes-exact-complementarity}, and the velocity space is
nonconforming, with the residual bounded by
\eqref{eq:stokes-consistency-bound}. We prefer the latter because the
extent of the cavitated region, which the constraint determines, is the
quantity the computations of Section~\ref{sec:numerical-results} find
hardest to pin down, and because the exact constraint gives the short
proof of Theorem~\ref{thm:stokes-error}. That estimate says nothing
directly about the position of the free boundary, and its regularity
assumptions are global; the channel examples can be singular where
$\Gamma\suprm{D}$ meets $\Gamma\suprm{N}$. Penalising $\bm{u}-\bm{g}$ on
$\Gamma\suprm{D}$ instead of $\bm{u}$, with the corresponding data term
moved to $L$, would remove the data term of
\eqref{eq:stokes-consistency-residual}; we keep
\eqref{eq:cr-stabilized-form}, which is the form computed with. Its
boundary-data residual vanishes in every pit computation but is present
for the nonconstant channel inflow, where facewise $H^1$ data on a
quasiuniform boundary mesh give only
$G_h(\bm{g})\lesssim h^{1/2}\|\nabla_\tau\bm{g}\|_{\Gamma\suprm{D}}$.
\end{rem}

\begin{rem}\label{rem:deviatoric-stability}
For fields vanishing on the whole boundary,
$2\mu(\bfeps(\bm{u}),\bfeps(\bm{v}))_\Omega
=\mu(\nabla\bm{u},\nabla\bm{v})_\Omega+\mu(\nabla\cdot\bm{u},\nabla\cdot\bm{v})_\Omega$,
so the usual viscous forms agree on divergence-free fields and differ only
on the cavitated set, where the choice is one of bulk viscosity. With
$\vert\Gamma\suprm{N}\cup\Gamma\suprm{E}\vert>0$ the choice also changes the
natural boundary condition, as for `do nothing' outflow conditions. The full-gradient form needs only the Poincar\'e inequality for coercivity, whereas
\eqref{eq:deviatoric-cauchy-stress} needs Korn's inequality, through
\eqref{eq:stokes-deviatoric-coercivity} for $n=2$ and in the trace-free form
of Dain \cite{Da06} for $n=3$. With $\tfrac12$ in place of $\tfrac13$ in
plane flow the bound \eqref{eq:stokes-deviatoric-coercivity} would
degenerate, so the physically correct factor is also the one for which
coercivity is elementary.
\end{rem}

\begin{rem}\label{rem:structure-preserving-parallel}
The parallel with the nodal Reynolds method is exact: in both cases the
discrete multiplier equation is a pointwise statement rather than an
orthogonality, by nodal quadrature for piecewise linears there and by
elementwise constancy here. The price is a nonconforming velocity space,
with the consistency cost \eqref{eq:stokes-consistency-bound}, and a
pressure of lower degree than that of Taylor--Hood;
Section~\ref{subsec:constitutive-discretization} compares the two. The
frozen systems are insensitive to $\gamma$, in contrast with
Theorem~\ref{thm:reynolds-stability}, because the term carrying
$\gamma^{-1}$ has the same sign as $a_h(\cdot,\cdot)$.
\end{rem}


\section{Nonlinear Solution Strategies}
\label{sec:nonlinear-solvers}

\subsection{Primal--Dual Active-Set Iteration}
\label{subsec:primal-dual-active-set}

For both flow models, the positive-part operator can be treated by freezing
the sign of its argument. Given an iterate, the active region contains the
quadrature points, nodes, or elements at which that argument is nonnegative;
the corresponding linear system is solved and the set is updated. An
unchanged set reproduces the same linear system and is therefore an exact
fixed point. The systems are nonsingular for the structure-preserving
Reynolds and Stokes pairs analysed above, but this does not imply that the
set iteration terminates. For these unsmoothed active-set solves we use an
unchanged set as the stopping criterion. On degenerate problems the set
can fail to repeat although the iterates have converged: it then changes
only by elements, or nodes, at which both factors of the complementarity
product vanish to rounding error, and every frozen solution satisfies the
Kuhn--Tucker conditions to that accuracy. We then use the sign rule, which
stops as soon as a frozen solution satisfies the sign conditions to a
tolerance $\tau$; complementarity holds for every frozen solution by
Theorems~\ref{thm:reynolds-nodal-structure}
and~\ref{thm:stokes-exact-complementarity}. We take $\tau=10^{-12}$. The
smoothed and quadratic programming solves use the residual and optimality
checks specified in Section~\ref{subsec:solver-robustness}.

\subsection{Smoothed Central-Path Formulation}
\label{subsec:smoothed-central-path}

The active-set iteration has no termination guarantee, and its iteration
count can grow under refinement, as Section~\ref{subsec:reynolds-pit}
shows. We therefore also consider a differentiable complementarity map
that fits the same augmented framework.

Replace the positive part by
\begin{equation}\label{eq:smoothing-function}
\varphi_s(w) := \frac{w}{2}+\sqrt{\frac{w^2}{4}+s},\qquad s>0
\end{equation}
the smoothing function of Chen, Harker, Kanzow and Smale
\cite{ChHa93,Ka96,Sm87}, which is smooth for $s>0$ and reduces to
$[\,\cdot\,]_+$ at $s=0$. Its use in a Nitsche formulation of contact, as a logarithmic barrier whose
solutions trace the central path of an interior point method, is analysed
in \cite{HaLaLa25}. Lemma~\ref{lem:complementarity-map} has an exact
counterpart.

\begin{lem}\label{lem:smoothed-complementarity}
Let $\gamma,s>0$ and $a,b\in{\mathbb{R}}$. Then
\begin{equation}
a = \frac{1}{\gamma}\varphi_s\left(\gamma a-b\right)
\qquad\Longleftrightarrow\qquad
a>0,\quad b>0,\quad ab = \frac{s}{\gamma}
\label{eq:smoothed-complementarity-equivalence}
\end{equation}
\end{lem}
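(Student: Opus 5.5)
The proof is a direct algebraic reduction, parallel to Lemma~\ref{lem:complementarity-map}, using two elementary properties of $\varphi_s$. First, $\varphi_s(w)>0$ for every $w$, because $\sqrt{w^2/4+s}>\vert w\vert/2\geq -w/2$ when $s>0$. Second, for $y>0$ the equation $y=\varphi_s(w)$ is equivalent to $y(y-w)=s$. To see this, rewrite $y=\varphi_s(w)$ as $y-\tfrac{w}{2}=\sqrt{\tfrac{w^2}{4}+s}$. Squaring gives $y^2-yw=s$. Conversely, if $y^2-yw=s$ with $y>0$, then $y-w=s/y>0$. Hence $y-\tfrac{w}{2}>\tfrac{y}{2}>0$, so the nonnegative square root is the correct branch and no extraneous root appears. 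Equivalently, $\varphi_s(w)$ is the unique positive root of $y^2-wy-s=0$, since the product of the two roots is $-s<0$.

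With these facts, set $w=\gamma a-b$. For the forward direction, suppose $a=\gamma^{-1}\varphi_s(\gamma a-b)$. Positivity of $\varphi_s$ gives $a>0$. Put $y=\gamma a>0$. The identity $y(y-w)=s$ becomes $\gamma a\,(\gamma a-\gamma a+b)=\gamma ab=s$. Thus $ab=s/\gamma$, and $b=s/(\gamma a)>0$. For the converse, suppose $a>0$, $b>0$ and $ab=s/\gamma$. Then $y=\gamma a>0$ satisfies $y(y-w)=\gamma ab=s$. By the second property, $y=\varphi_s(w)$, which is the left-hand statement.

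The only step that needs care is the branch choice when squaring, that is, showing $y-w/2\geq0$ in the converse. The observation $y-w=s/y>0$ settles it, so I do not expect a genuine obstacle. Note that the condition $b>0$ in the conclusion is automatic from $a>0$ and $ab=s/\gamma>0$; it is listed only to mirror Lemma~\ref{lem:complementarity-map}.
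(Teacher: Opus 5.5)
Your proof is correct and follows essentially the paper's argument. Both characterise $\varphi_s(w)$ as the unique positive root of $z^2-wz-s=0$, and your converse matches the paper's, with your branch check replacing its appeal to that characterisation; in the forward direction the paper reads off $b=\varphi_s(w)-w=\varphi_s(-w)>0$ and $\gamma ab=\varphi_s(w)\varphi_s(-w)=s$ from two identities, whereas you obtain $\gamma ab=s$ from the quadratic relation and then deduce $b>0$ from $a>0$.
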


\begin{proof}
By construction $\varphi_s(w)$ is the unique positive root of
$z^2-wz-s = 0$, and
\begin{equation}\label{eq:smoothing-identities}
\varphi_s(w)-w = \varphi_s(-w),\qquad
\varphi_s(w)\,\varphi_s(-w) = s
\end{equation}
the first by inspection and the second because the product telescopes to
$(w^2/4+s)-w^2/4$. Suppose $a = \gamma^{-1}\varphi_s(w)$ with
$w=\gamma a-b$. Then $b = \gamma a-w = \varphi_s(w)-w = \varphi_s(-w)$, so
$a$ and $b$ are positive, and $\gamma ab = \varphi_s(w)\varphi_s(-w) = s$.
Conversely, let $a,b>0$ with $ab = s/\gamma$ and put $w = \gamma a-b$. Then
$(\gamma a)^2-w(\gamma a)-s = \gamma ab-s = 0$ and $\gamma a>0$, so
$\gamma a = \varphi_s(w)$.
\end{proof}

Against Lemma~\ref{lem:complementarity-map}, the complementarity condition $ab=0$ is perturbed
into $ab = s/\gamma$. For the Stokes model that reads
$p\,\nabla\cdot\bm{u} = s/\gamma$: the pressure and the divergence are both
strictly positive, and no point of the film is either exactly cavitated or
exactly intact. This is the central path, and the regularisation acts as an
interior point relaxation rather than as a penalty.

For the solver experiments we take the Crouzeix--Raviart pair with the
full-gradient viscous form and no jump penalty from
Section~\ref{subsec:constitutive-discretization}, both
because it is the case in which the set-repetition rule fails and because,
having $\nabla\cdot\vec{V}^h = Q^h$, it carries the identity of Lemma
\ref{lem:smoothed-complementarity} elementwise rather than only weakly. Write
$w_T = \gamma p_T-(\nabla\cdot\bm{u}^h)_T$ and
$\lambda = \gamma^{-1}\varphi_s(w)$. With $\mathsf{K}$ the stiffness
matrix, $\mathsf{B}$ the divergence matrix,
$(\mathsf{B}u)_T = \int_T\nabla\cdot\bm{u}^h$, and $\mathsf{M}$ the
diagonal matrix of element areas,
the method is $R(u,p)=0$ with
\begin{equation}\label{eq:smoothed-newton-residual}
R_u = \mathsf{K}u - \mathsf{B}^{\rm T}\lambda - \mathsf{F},\qquad
R_p = \gamma \mathsf{M}(\lambda-p)
\end{equation}
and, with $\mathsf{D} := {\rm diag}\,\varphi_s'(w_T)$, its Jacobian is
\begin{equation}\label{eq:smoothed-newton-jacobian}
\mathsf{J} = \left[\begin{array}{cc}
\mathsf{K}+\gamma^{-1}\mathsf{B}^{\rm T}\mathsf{D}\mathsf{M}^{-1}\mathsf{B} & -\mathsf{B}^{\rm T}\mathsf{D} \\
-\mathsf{D}\mathsf{B} & \gamma \mathsf{M}(\mathsf{D}-\mathsf{I})\end{array}\right]
\end{equation}

Since $\varphi_s'$ lies strictly between $0$ and $1$, the $(2,2)$ block of
\eqref{eq:smoothed-newton-jacobian} is negative definite. If $\mathsf{K}$ is
positive definite on the free velocity degrees of freedom, as the boundary
conditions and the stabilisation of Section~\ref{sec:stokes-cavitation}
ensure, and for the full-gradient form the discrete Poincar\'e inequality,
the $(1,1)$ block is positive definite. The Jacobian is then symmetric
quasidefinite and nonsingular for every $s>0$ and every iterate, without the
nondegeneracy hypothesis of Theorem~\ref{thm:stokes-discrete-solvability}.
As $s\to 0$, away from switching points, $\mathsf{D}$ tends to the indicator
of $\mathcal{A}$ and the frozen system
\eqref{eq:stokes-frozen-momentum}--\eqref{eq:stokes-frozen-multiplier} is
recovered; at a zero switching argument the derivative tends to $1/2$
instead.


\section{Numerical Verification and Model Comparison}
\label{sec:numerical-results}

In this section we compare the nodal mixed and multiplier-free stabilised
Reynolds methods, test the jump-stabilised deviatoric Stokes method in two
and three dimensions, and compare the Reynolds and Stokes models on a pitted
surface. The Stokes jump parameter is \(\gamma_1=1\), except in the stated
penalty sweep. Taylor--Hood and full-gradient CR comparisons in
Section~\ref{subsec:constitutive-discretization} distinguish changes in
approximation space, constitutive law, and natural traction.

The unsmoothed experiments use the primal--dual active-set iteration
\cite{HiItKu02}, stopped when the set repeats or, where stated, by the
sign rule of Section~\ref{subsec:primal-dual-active-set}. The model
comparison of Section~\ref{subsec:model-comparison} and the reference
solutions of Section~\ref{subsec:solver-robustness} use quadratic
programming with working-set refinement. For every reported solution we
check stationarity, the signs of pressure and divergence, and
complementarity.

The accompanying MATLAB directory contains the source and drivers used for
these experiments.
No random numbers are used. The reported datasets were obtained with MATLAB R2025a and R2025b.

\subsection{Reynolds Pit Problem}
\label{subsec:reynolds-pit}

Let $\Omega = (0,3)\times(0,1)$ and let the workpiece carry one pit,
\begin{equation}\label{eq:reynolds-pit-geometry}
d(x,y) = 1 + \delta\, {\rm e}^{-\varrho},\qquad
\varrho = \frac{(x-x_{\rm p})^2+(y-y_{\rm p})^2}{r^2}
\end{equation}
with $\delta = 1$, $r = 0.35$ and $(x_{\rm p},y_{\rm p}) = (1.5,0.5)$, so
that the pit is as deep as the nominal film is thick. Since $f = -\partial
d/\partial x$, the load is negative over the upstream half of the pit,
where the gap diverges, and positive over the downstream half, where it
converges. The upstream half is therefore the one that cavitates.

We compute first with the conforming mixed discretisation of Section
\ref{subsec:reynolds-nodal-method}. Table~\ref{tab:reynolds-nodal-refinement} shows the behaviour under uniform refinement. At
refinement level 5, the active nodes project onto
$x\in[0.03125,1.28125]$, and the discrete pressure attains its maximum at
$x=1.9375$, just downstream of the pit; see Figure~\ref{fig:reynolds-pit-pressure}, and
Figure~\ref{fig:reynolds-pit-multiplier} for the multiplier.

\begin{table}[htbp]
\centering
\begin{tabular}{rrrrrr}
\toprule
refinement & nodes & iterations & $\max P_h$ & cavitated fraction & $\min P_h$ \\
\midrule
2 & 65 & 3 & 0.03999 & 0.231 & $-3.5\cdot 10^{-33}$ \\
3 & 225 & 4 & 0.04079 & 0.311 & $-1.7\cdot 10^{-35}$ \\
4 & 833 & 8 & 0.04181 & 0.353 & $-9.2\cdot 10^{-36}$ \\
5 & 3201 & 13 & 0.04193 & 0.383 & $-5.4\cdot 10^{-34}$ \\
6 & 12545 & 22 & 0.04194 & 0.398 & $-2.1\cdot 10^{-33}$ \\
\bottomrule
\end{tabular}
\caption{Reynolds model, conforming mixed discretisation of
Section~\ref{subsec:reynolds-nodal-method}. Uniform refinement of the pit
problem \eqref{eq:reynolds-pit-geometry}. The cavitated fraction is the
fraction of nodes in $\mathcal{A}$.}
\label{tab:reynolds-nodal-refinement}
\end{table}

First, at refinement level 4, complementarity holds to round-off: $P_i = 0$
exactly on $\mathcal{A}$ and $\lambda_i = 0$ off it, so
$\max_i\vert\lambda_iP_i\vert = 1.7\cdot 10^{-34}$ and the discrete
Kuhn--Tucker residual $\|AP-F-m\lambda\|_\infty$ is $1.2\cdot 10^{-16}$
against $\|F\|_\infty = 9.4\cdot 10^{-3}$. Second, and as a consequence,
$\gamma$ cancels from the method altogether: the test
$\gamma\lambda_i-P_i$ reduces to $\gamma\lambda_i$ on $\mathcal{A}$
and to $-P_i$ off it. At a fixed point, membership gives
$\lambda_i\geq0$ on $\mathcal{A}$ and $P_i>0$ off it. Varying $\gamma$
over eight orders of
magnitude, from $10^{-4}$ to $10^{4}$, left both the computed pressure and
the iteration count unchanged, the former to $5\cdot 10^{-16}$ in
relative $\ell_2$ norm. The method is thus exactly the classical
primal--dual active set method, and $\gamma$ survives only as a formality
inherited from the continuous formulation.

Third, the iteration count is \emph{not} mesh independent. The active set
is initialised from the unconstrained solution, which overestimates the
cavitated region, and it then recedes by one layer of nodes per iteration:
a node leaves $\mathcal{A}$ only when its multiplier turns negative, which
happens only at the edge of the set. The cost remains modest on the meshes in
Table~\ref{tab:reynolds-nodal-refinement}, but it grows.

Fourth, the cavitated fraction in Table~\ref{tab:reynolds-nodal-refinement}
has not settled. Upstream of the pit the load decays like the Gaussian, so
towards the inflow boundary the multiplier lies many orders of magnitude
below the discretisation error and the classification of those nodes is not
resolved. The fraction is therefore an ill-conditioned diagnostic, whereas
the peak pressure changes only in the fifth digit over the last
refinement.

As an independent check the same discrete problem, which is the quadratic
program of minimising $\frac12 P^{\rm T}AP - F^{\rm T}P$ subject to
$P\geq 0$, was solved by projected Gauss--Seidel. On the mesh with $833$
nodes the two solutions agree to $3.7\cdot 10^{-13}$ in the maximum norm,
their free-node contact sets coincide, and their energies differ by
$2.2\cdot 10^{-16}$ in relative terms.

\subsection{Reynolds Convergence Verification}
\label{subsec:reynolds-convergence}

We verify the nodal mixed method on \(\Omega=(-1,1)^2\), with
\(d\equiv1\) and homogeneous Dirichlet data on the entire boundary.
Writing \(r^2=x^2+y^2\), take
\begin{align}
P(x,y) &= \bigl(\tfrac14-r^2\bigr)_+^2 \label{eq:manufactured-obstacle}\\
f(x,y) &= \max\{2-16r^2,-2\} \label{eq:manufactured-obstacle-load}
\end{align}
and \(\lambda=0\) for \(r<1/2\), \(\lambda=2\) for \(r>1/2\).
Inside the disk, \(-\Delta P=2-16r^2=f\); outside, \(P=0\) and
\(\lambda=-f\). Both \(P\) and its gradient vanish on the circular free
boundary, so the Kuhn--Tucker equations hold weakly without an interface
source. The solution belongs to \(H^2(\Omega)\), with a jump in its second
derivatives across the free boundary.

We divide each side of the square into \(n\) intervals and split each
square cell into two triangles. These meshes are uniformly shape regular,
with maximum diameter \(h=2\sqrt{2}/n\), and the circular free boundary
is not fitted to the mesh. Table~\ref{tab:reynolds-convergence} reports
errors of the raw two-dimensional finite element solution, without
averaging or other postprocessing. The load uses the solver's degree-three
triangle quadrature. Error integration uses tensor Gauss quadrature after
a Duffy transformation, with two additional uniform refinement levels on
triangles that may intersect the circle. Increasing the quadrature from
six to ten points in each transformed coordinate changes the reported
\(L_2\) and \(H^1\)-seminorm errors by at most \(4.9\cdot10^{-8}\) and
\(7.6\cdot10^{-6}\), respectively, in relative terms.

\begin{table}[htbp]
\centering
\begin{tabular}{rrrrrrr}
\toprule
\(n\) & elements & \(\|P-P_h\|_{\Omega}\) & rate & \(\vert P-P_h\vert_{H^1(\Omega)}\) & rate & its \\
\midrule
16  & 512   & \(2.334\cdot10^{-3}\) & ---  & \(3.970\cdot10^{-2}\) & ---  & 5 \\
32  & 2048  & \(5.510\cdot10^{-4}\) & 2.08 & \(2.012\cdot10^{-2}\) & 0.98 & 7 \\
64  & 8192  & \(1.458\cdot10^{-4}\) & 1.92 & \(1.020\cdot10^{-2}\) & 0.98 & 10 \\
128 & 32768 & \(3.510\cdot10^{-5}\) & 2.05 & \(5.124\cdot10^{-3}\) & 0.99 & 17 \\
\bottomrule
\end{tabular}
\caption{Raw two-dimensional Reynolds errors for
\eqref{eq:manufactured-obstacle} on uniformly refined square meshes.
The final column gives the active-set iteration count.}
\label{tab:reynolds-convergence}
\end{table}

The first-order gradient accuracy agrees with
Theorem~\ref{thm:reynolds-error}. The second-order \(L_2\) accuracy is a
property of this experiment. All four active-set runs converge with the
nodal sign and complementarity checks satisfied.

\subsection{Multiplier-Free Stabilised Reynolds Method}
\label{subsec:reynolds-stabilized-results}

We turn to the method \eqref{eq:reynolds-discrete-functional}--\eqref{eq:reynolds-stabilized-form} of
\cite{GuRaStVi18}, taken with $k=2$. By Theorem~\ref{thm:reynolds-stability} the parameter
must be scaled as $\gamma\vert_T = \gamma_0h_T^2$ and $\gamma_0$ kept below
a threshold, and the point of this subsection is to locate that threshold and
then to check the two discretisations against each other.
The largest generalised eigenvalue $\Lambda_{\max}$ of
Remark~\ref{rem:reynolds-parameter-threshold}, the reciprocal of the
threshold, is $231$, $290$ and $334$ on refinement levels $2$, $3$ and $4$,
with $161$, $705$ and $2945$ degrees of freedom. It is still growing slowly,
so the admissible range $\gamma_0<0.0030$ is read from the finest mesh.

Taking $\gamma_0 = 10^{-3}$, a factor three inside the bound, gives Table
\ref{tab:reynolds-method-comparison}. Above the bound the frozen systems lose their definiteness and
the iteration does not converge; at $\gamma_0 = 1$, for instance, the
pressure acquires excursions to $-0.12$ and the active set never settles.

\begin{table}[htbp]
\centering
\small
\begin{tabular}{rrrrrrrr}
\toprule
& & \multicolumn{2}{c}{nodal, $\mathbb{P}_1$} &
\multicolumn{3}{c}{stabilised, $\mathbb{P}_2$} & \\
ref. & elem. & its & $\max P_h$ & its & $\max P_h$ & $\min P_h$
& difference \\
\midrule
3 & 384 & 4 & 0.0407945 & 14 & 0.0419143 & $-1.3\cdot10^{-5}$ & $6.1\cdot 10^{-4}$ \\
4 & 1536 & 8 & 0.0418111 & 24 & 0.0419537 & $-1.8\cdot10^{-6}$ & $2.2\cdot 10^{-4}$ \\
5 & 6144 & 13 & 0.0419265 & 44 & 0.0419507 & $-7.9\cdot10^{-7}$ & $3.9\cdot 10^{-5}$ \\
\bottomrule
\end{tabular}
\caption{The two discretisations of the Reynolds model compared,
$\gamma_0 = 10^{-3}$. The last column is the difference at the vertices
shared by the two meshes.}
\label{tab:reynolds-method-comparison}
\end{table}

The decreasing difference supports convergence towards a common solution:
the difference at the shared vertices falls
by an order of magnitude over the range of meshes, and the centreline
profiles in Figure~\ref{fig:reynolds-centerline-pressure} are indistinguishable.
The active-set iteration of the stabilised method needs $14$, $24$ and $44$
iterations, roughly doubling under refinement, against $4$, $8$ and $13$
for the nodal method. The two methods differ in how
the constraint is met. The nodal method satisfies $P_h\geq 0$ exactly; the
stabilised method is consistent and imposes it only weakly, so a small
undershoot survives, of size $10^{-5}$ to $10^{-7}$ here and decreasing
under refinement.

\begin{figure}[htbp]
\centering
\includegraphics[width=0.78\textwidth]{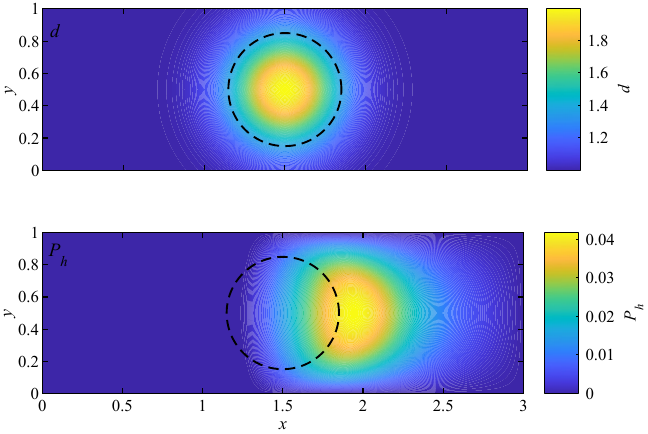}
\caption{The pit \eqref{eq:reynolds-pit-geometry}: film thickness $d$ above, scaled pressure
$P_h$ below, the broken line marking the circle of radius $r$ about the
centre of the pit. The lubricant moves in the positive $x$ direction. The
upstream half of the pit, where the gap diverges, is cavitated and carries
$P_h = 0$; the pressure builds over the converging half and peaks just
downstream of it.}
\label{fig:reynolds-pit-pressure}
\end{figure}

\begin{figure}[htbp]
\centering
\includegraphics[width=0.78\textwidth]{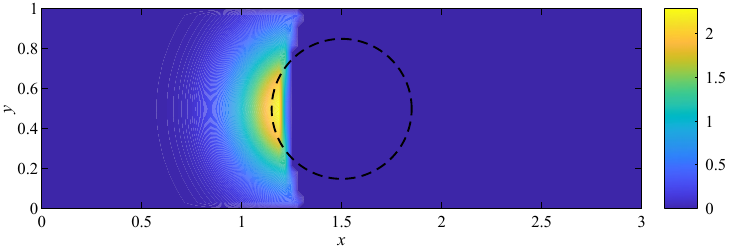}
\caption{The multiplier $\lambda_h$ for the pit of Figure~\ref{fig:reynolds-pit-pressure}. It
vanishes wherever the film is intact, and is supported on the cavitated
region, where it is the source that holds the pressure at zero. Note that
it concentrates against the free boundary rather than spreading over the
cavitated region.}
\label{fig:reynolds-pit-multiplier}
\end{figure}

\begin{figure}[htbp]
\centering
\includegraphics[width=0.7\textwidth]{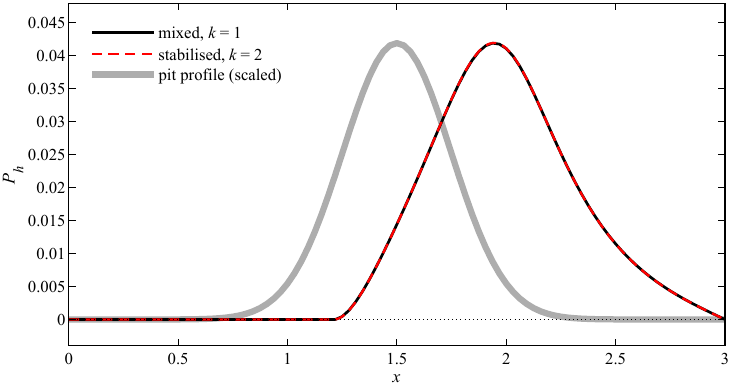}
\caption{Pressure along the centreline $y = 0.5$ for the two
discretisations of the Reynolds model, with the pit profile shown scaled
for reference.}
\label{fig:reynolds-centerline-pressure}
\end{figure}

\subsection{Two-Dimensional Stokes Verification}
\label{subsec:stokes-two-dimensional}

We take the channel $\Omega = (0,3)\times(0,1)$ with $\mu = 1$ and
$\bm{f} = \bm{0}$. On $\Gamma\suprm{D}$ we prescribe no slip along $y=0$
and $y=1$ and the inflow profile $\bm{g} = (y(1-y),0)$ along $x=0$;
$\Gamma\suprm{N}$ is the outflow $x=3$, left traction free. In the bulk the
solution is the Poiseuille flow $\bm{u} = (y(1-y),0)$ with $p = 6-2x$, but
that profile does not satisfy $\bm{\sigma}\cdot\bfn = \bm{0}$ at the
outflow, since the shear component of $\bm{\sigma}$ there is
$2\mu\varepsilon_{12} = 1-2y$, which does not vanish. The pressure is therefore drawn down near $x = 3$ and
cavitates in a thin layer against the outflow face, as
Figure~\ref{fig:stokes-channel} shows.

The velocity is approximated by the stabilised Crouzeix--Raviart element
with $\gamma_1 = 1$ and the pressure by piecewise constants. Table
\ref{tab:stokes-channel-refinement} shows the refinement behaviour. Since the
pressure is piecewise constant and has two traces on the mesh-aligned
centreline, it is read from the upper trace at the longitudinal cell
centres, over $x<2.5$, away from the corners where the no-slip walls meet
the traction-free end. The sampled peak pressures increase towards the Poiseuille inflow reference
value $6$ over these meshes, and the cavitated fraction is about $1.9$
per cent on the two finest meshes. The exact traction-free channel need not
have peak pressure $6$.

\begin{table}[htbp]
\centering
\small
\begin{tabular}{rrrrrrr}
\toprule
ref. & elements & d.o.f. & its & $\max p^h$ & cavitated fr.
& $\max_T\vert p_T(\nabla\cdot\bm{u}^h)_T\vert$ \\
\midrule
2 & 96   & 416   & 3 & 5.7271 & 0.0104 & $1.3\cdot10^{-15}$ \\
3 & 384  & 1600  & 2 & 5.7739 & 0.0208 & $2.6\cdot10^{-15}$ \\
4 & 1536 & 6272  & 4 & 5.8228 & 0.0189 & $8.3\cdot10^{-15}$ \\
5 & 6144 & 24832 & 3 & 5.8525 & 0.0187 & $2.1\cdot10^{-14}$ \\
\bottomrule
\end{tabular}
\caption{Stokes model in two dimensions, stabilised Crouzeix--Raviart with
$\gamma_1 = 1$ and $\gamma = 100$. The cavitated fraction is by area; the
last column is the elementwise complementarity residual.}
\label{tab:stokes-channel-refinement}
\end{table}

The complementarity conditions
hold to round-off, as Theorem~\ref{thm:stokes-exact-complementarity} says they must: $p_T = 0$ on the
cavitated elements and $(\nabla\cdot\bm{u}^h)_T = 0$ off them, with pressure and divergence nonnegative to numerical precision. The iteration counts do not grow, in contrast with Tables
\ref{tab:reynolds-nodal-refinement} and \ref{tab:reynolds-convergence}; the cavitated set is a thin layer against one
face rather than a region whose boundary has to travel.

The parameter $\gamma$ has disappeared from the method, again by Theorem
\ref{thm:stokes-exact-complementarity}. Varying it over eight orders of magnitude, from $10^{-4}$ to
$10^{4}$, at refinement level $4$ left the computed pressure unchanged to
$5.4\cdot 10^{-12}$ in relative $\ell_2$ norm, the cavitated set unchanged at
$29$ elements, and the iteration count unchanged at four. This is a
substantive gain over a continuous pressure space, for which the same
computation leaves a residual violation of the constraint of order
$\gamma^{-1}$; here there is none to leave.

\begin{figure}[htbp]
\centering
\includegraphics[width=0.85\textwidth]{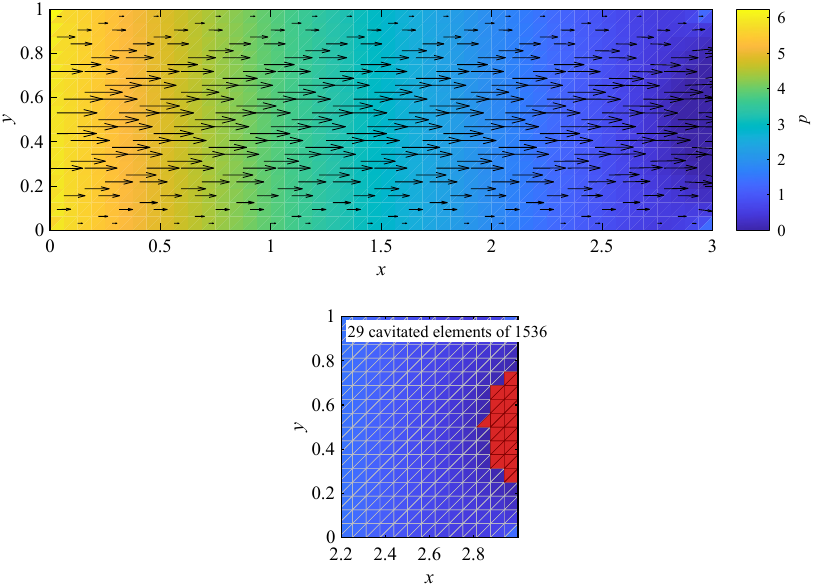}
\caption{The two-dimensional Stokes problem. Above, the pressure and the
velocity field; the flow enters with the Poiseuille profile and spreads as
it approaches the traction-free end. Below, the outflow region enlarged,
with the cavitated elements in red.}
\label{fig:stokes-channel}
\end{figure}

\subsection{Three-Dimensional Stokes Verification}
\label{subsec:stokes-three-dimensional}

We extrude the channel to the box $(0,3)\times(0,1)\times(0,1)$ and
solve \eqref{eq:stokes-discrete-momentum}--\eqref{eq:stokes-discrete-multiplier} on tetrahedra, again with the stabilised
Crouzeix--Raviart element, the velocity being nonconforming piecewise linear
with one degree of freedom per face and the pressure piecewise constant. The
jump penalty \eqref{eq:cr-stabilized-form} carries over unchanged, the faces of the
tetrahedra taking the place of the edges of the triangles, with
$h_F = (\vert T^+\vert+\vert T^-\vert)/(2\vert F\vert)$. The mesh is
obtained by dividing a structured grid of
hexahedra into six tetrahedra each by the Kuhn subdivision, which is
conforming. The channel boundary conditions of
Section~\ref{subsec:stokes-two-dimensional} are retained,
and on the two faces $z=0$ and $z=1$ we impose $u_z = 0$ and leave the
remaining components free. These faces form $\Gamma\suprm{S}$ in
\eqref{eq:stokes-symmetry-boundary}: the normal component is imposed
through its Crouzeix--Raviart face mean, while the tangential components
carry the natural zero traction. This is the componentwise essential
condition included in \eqref{eq:stokes-spaces} and
\eqref{eq:cr-velocity-space}, so the experiment lies within the boundary
framework analysed above. The faces are symmetry planes, and the exact
solution is the two-dimensional one of the previous subsection extruded in
$z$, independent of $z$ and with $u_z\equiv 0$. That makes the two
computations directly comparable.

The pressure is piecewise constant, so it has no unique trace on the
mesh-aligned line $y=z=0.5$. The peak in
Table~\ref{tab:stokes-three-dimensional} is therefore the maximum over the
one-cell tube surrounding that line, restricted to $x<2.5$. This fixed
geometric rule avoids a tetrahedron-search tie at the line itself.

\begin{table}[htbp]
\centering
\begin{tabular}{lrrrrrr}
\toprule
mesh & elements & d.o.f. & its & $\max p^h$ & cavitated fr.
& $\max\vert u_z\vert$ \\
\midrule
$9\times3\times3$ & 486 & 3780 & 2 & 6.5538 & 0.0123 & $6.5\cdot10^{-3}$ \\
$18\times6\times6$ & 3888 & 28728 & 3 & 6.0014 & 0.0183 & $2.9\cdot10^{-3}$ \\
$27\times9\times9$ & 13122 & 95256 & 3 & 5.9806 & 0.0171 & $1.9\cdot10^{-3}$ \\
$36\times12\times12$ & 31104 & 223776 & 3 & 5.9304 & 0.0186 & $1.4\cdot10^{-3}$ \\
\bottomrule
\end{tabular}
\caption{Stokes model in three dimensions, $\gamma = 100$. The cavitated
fraction is by volume; the last column measures the departure from the
two-dimensional solution, whose $u_z$ vanishes identically.}
\label{tab:stokes-three-dimensional}
\end{table}

The three-dimensional computation is consistent with the two-dimensional
behaviour, but the sampled peaks are not yet identical: $5.93$ against
$5.85$ on the finest meshes of Tables~\ref{tab:stokes-three-dimensional}
and \ref{tab:stokes-channel-refinement}. Both values are below the
Poiseuille reference value $6$. The three-dimensional peaks decrease under
refinement, while the two-dimensional ones increase. The two tables use different meshes and
sampling rules, so these values are neither an error bound nor a bracket
for the exact peak. The cavitated
fraction agrees more closely still, $0.0186$ against $0.0187$, and
cavitation occurs in both at the outflow face, as Figure~\ref{fig:stokes-three-dimensional}
shows. On the finest mesh the linear system of each active set is solved
by the iterated penalty method \cite{FG83}, with penalty parameter
$10^{4}$, in place of a sparse direct factorisation. On the
$9\times3\times3$ and $18\times6\times6$ meshes, where both solvers were
run, they give the same active sets and pressures that differ by less
than $2\cdot10^{-10}$. The complementarity conditions again hold to
round-off, the elementwise residual not exceeding $1.7\cdot 10^{-14}$ on
the three coarser meshes and $1.4\cdot 10^{-13}$ on the finest, and $\gamma$ again
cancels: varying it from $1$ to $10^{6}$ on the $18\times6\times6$ mesh left the
pressure unchanged to $3\cdot 10^{-14}$ and the cavitated set unchanged at
$71$ elements. Theorem~\ref{thm:stokes-exact-complementarity} makes no reference to the dimension.

The Kuhn subdivision is not itself symmetric in $z$, and the
solution inherits that: $\max\vert u_z\vert$ decreases only at first order under
refinement, from $6.5\cdot 10^{-3}$ to $1.4\cdot 10^{-3}$, where the exact
solution has $u_z\equiv 0$. Mixed-boundary singularities may affect the pressure
at the two edges $x=3$, $y\in\{0,1\}$, where a Dirichlet condition meets a
Neumann one, which is why the peak pressures of this subsection are read away
from the outflow, as in Table~\ref{tab:stokes-channel-refinement}.

\begin{figure}[htbp]
\centering
\includegraphics[width=0.95\textwidth]{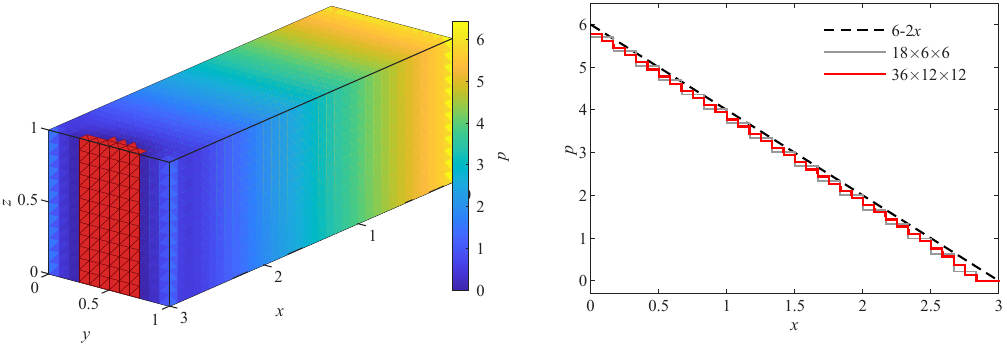}
\caption{Stokes flow with cavitation in three dimensions. Left, the
pressure on the boundary on the $36\times12\times12$ mesh, with the
cavitated elements at the outflow face marked. Right, the pressure along
the line $y = z = 0.5$ on the $18\times6\times6$ and $36\times12\times12$
meshes against the Poiseuille value $6-2x$. For $x<2.5$ the piecewise
constant pressure departs from it by at most $0.34$ and $0.23$,
respectively, and it vanishes near the outflow.}
\label{fig:stokes-three-dimensional}
\end{figure}

\subsection{Constitutive and Discretisation Effects}
\label{subsec:constitutive-discretization}

We use the film $G(x)<z<c$ between the stationary shaped surface
$G(x) = c-H(x)$ below, which dips into the workpiece where the pit is, and
the plane $z=c$ above, sliding with velocity $(V,0)$, the gap thickness
being
\begin{equation}\label{eq:common-pit-geometry}
H(x) = c\left(1+\delta\,{\rm e}^{-(x-x_{\rm p})^2/r^2}\right),
\qquad 0<x<L
\end{equation}
with $L = 24c$ and $x_{\rm p} = L/2$, and set $c = \mu = V = 1$. The slope
$\delta/r$ measures how steeply the pit cuts into the film. The Stokes
problem is solved in the cross section, with $\bm{u}=\bm{0}$ on the shaped
surface and $\bm{u}=(V,0)$ on the sliding plane. In this subsection and in
Section~\ref{subsec:solver-robustness} both ends are traction free.
Pressures are sampled on the mid-plane at $8001$ points on $[0,L]$ and
retained on the window $4<x<20$. A sample is cavitated when its element
belongs to the inactive pressure set or, for Taylor--Hood, when
$p^h\leq10^{-8}$; the cavity length is the sample spacing $0.003$ times the
number of cavitated samples.

Section~\ref{sec:stokes-cavitation}
argued that the constitutive law matters, and Section~\ref{subsec:stokes-discretization} that the
stabilisation \eqref{eq:cr-stabilized-form} is what allows that law to be used on the pair
adopted here. The first is a question about the model, and is settled by
holding the discretisation fixed and changing the law; the second concerns stability of the chosen discrete viscous form. We
compare two discretisations of the deviatoric formulation and, separately,
a full-gradient CR formulation without a jump penalty.

For the first, we solve the same problem twice on the same Taylor--Hood
discretisation, with the same meshes and the same $\gamma$, so that the
element of this paper and its stabilisation stay out of the experiment,
changing only the first
Lam\'e parameter: $-2\mu/3$, which is \eqref{eq:deviatoric-cauchy-stress}, against $0$, which is
the customary $\bm{\sigma} = 2\mu\bfeps(\bm{u})-p\bfI$ that Section
\ref{sec:stokes-cavitation} argues against. Any difference is then the constitutive law
alone. Table~\ref{tab:constitutive-law-comparison} reports it. The peak pressure is insensitive,
moving by under one per cent on all four pits; the cavitated length is not,
with ratios $1.09$, $1.05$, $1.27$, and $2.23$ in order of increasing
steepness. The largest discrepancy occurs at the steepest pit, where \eqref{eq:deviatoric-cauchy-stress} gives
$0.498$ against $1.113$. The customary law gives the larger cavity in all
four tests; Section~\ref{sec:stokes-cavitation} explains the change in
mechanical pressure but gives no ordering of the cavity sets. The
mechanical pressure itself shows the difference directly. On the stabilised
Crouzeix--Raviart discretisation of the steepest pit, mesh $256\times16$,
the customary law gives
$\bar p = p-\tfrac{2\mu}{3}\nabla\cdot\bm{u}<0$ on all $676$ cavitated
elements, down to $-0.64$, and a cavity length of $1.314$, whereas the
deviatoric law gives $\bar p = p = 0$ on its $492$ cavitated elements and
a cavity length of $0.843$.

\begin{table}[htbp]
\centering
\small
\begin{tabular}{rrrrrrr}
\toprule
& \multicolumn{2}{c}{$\max p$} & & \multicolumn{2}{c}{cavitated length} & \\
$\delta/r$ & \eqref{eq:deviatoric-cauchy-stress} & $2\mu\bfeps-p\bfI$ & difference
& \eqref{eq:deviatoric-cauchy-stress} & $2\mu\bfeps-p\bfI$ & ratio \\
\midrule
0.0625 & 3.3738 & 3.3566 & 0.51\% & 1.935 & 2.106 & 1.09 \\
0.25   & 5.4539 & 5.4336 & 0.37\% & 2.706 & 2.847 & 1.05 \\
0.5    & 3.7559 & 3.7348 & 0.56\% & 1.632 & 2.073 & 1.27 \\
1.0    & 2.1466 & 2.1253 & 0.99\% & 0.498 & 1.113 & 2.23 \\
\bottomrule
\end{tabular}
\caption{The two constitutive laws on one Taylor--Hood discretisation of
the pits \eqref{eq:common-pit-geometry}, mesh
$256\times 16$ and $\gamma = 100$: the deviatoric law
\eqref{eq:deviatoric-cauchy-stress}, with first Lam\'e parameter
$-2\mu/3$, and $\bm{\sigma}=2\mu\bfeps(\bm{u})-p\bfI$, with first Lam\'e
parameter zero. Only the law differs.}
\label{tab:constitutive-law-comparison}
\end{table}

We turn to the element. The pair used throughout is nonconforming and its
pressure is only piecewise constant, so it is worth checking against a
standard conforming one; and the stabilisation \eqref{eq:cr-stabilized-form} is worth its
place only if omitting it does damage. Table~\ref{tab:discretization-comparison} therefore reports
three computations of the steepest pit: the proposed method, a
Taylor--Hood discretisation of the same deviatoric formulation, and CR with
a full-gradient viscous form and no jump penalty. Taylor--Hood uses
continuous piecewise quadratic velocity and continuous piecewise linear
pressure. The full-gradient comparison changes both the viscous form and
the natural traction condition and is not a controlled penalty-removal
experiment.

\begin{table}[htbp]
\centering
\small
\setlength{\tabcolsep}{4.5pt}
\begin{tabular}{rlrrrrrr}
\toprule
mesh & pair & d.o.f. & its & $\max p$ & cav.\ length & $\min p^h$
& $\max_T\vert p_T(\nabla\cdot\bm{u}^h)_T\vert$ \\
\midrule
$128\times 8$ & proposed         & 8464   & 8   & 2.1537 & 0.75 & $0$ & $6.0\cdot10^{-15}$ \\
              & full-gradient CR  & 8464   & 9   & 1.9870 & 5.94 & $0$ & $3.4\cdot10^{-15}$ \\
              & Taylor--Hood      & 9899   & 12  & 2.1556 & 0.45 & $-5.3\cdot10^{-4}$ & --- \\
$256\times 16$& proposed         & 33312  & 9   & 2.1473 & 0.84 & $0$ & $1.0\cdot10^{-14}$ \\
              & full-gradient CR  & 33312  & 9   & 2.0934 & 5.94 & $-5.9\cdot10^{-15}$ & $1.7\cdot10^{-14}$ \\
              & Taylor--Hood      & 38227  & 13  & 2.1466 & 0.50 & $-2.8\cdot10^{-4}$ & --- \\
$512\times 32$& proposed         & 132160 & 10  & 2.1451 & 0.80 & $0$ & $3.0\cdot10^{-14}$ \\
              & full-gradient CR  & 132160 & 9   & 2.1239 & 5.89 & $-1.5\cdot10^{-14}$ & $3.3\cdot10^{-14}$ \\
              & Taylor--Hood      & 150179 & 15  & 2.1452 & 0.69 & $-1.8\cdot10^{-4}$ & --- \\
\bottomrule
\end{tabular}
\caption{The proposed deviatoric CR method, full-gradient CR without a
jump penalty, and deviatoric Taylor--Hood on the steepest pit, $\delta/r=1$,
of \eqref{eq:common-pit-geometry}, with $\gamma = 100$.
Pressures and cavity lengths use the same mid-plane window. The two finer
full-gradient rows are stopped by the sign rule of
Section~\ref{subsec:primal-dual-active-set}.}
\label{tab:discretization-comparison}
\end{table}

The proposed method agrees with Taylor--Hood to four digits in the peak
pressure on the finest mesh, $2.1451$ against $2.1452$, on two
discretisations sharing only the continuous formulation and on some twelve
per cent fewer degrees of freedom. The cavitated lengths are of the same
size but have not settled: $0.75$, $0.84$ and $0.80$ against $0.45$, $0.50$
and $0.69$ on the three meshes. On the finest mesh the gentler pit
$\delta/r = 0.5$ gives $3.7548$ against $3.7548$ for the peak pressure and
$2.06$ against $1.89$ for the cavitated length. Where the
two part company is the constraint: the Crouzeix--Raviart pressure is
nonnegative to round-off and the elementwise conditions hold to
$10^{-14}$, by Theorem~\ref{thm:stokes-exact-complementarity}, against an undershoot of some $3\cdot10^{-4}$ for Taylor--Hood on the
intermediate mesh at the same $\gamma$. The continuous pressure space does
not guarantee exact nonnegativity through this multiplier equation.

The full-gradient CR rows give much larger cavity lengths. The difference
cannot be attributed to the jump penalty alone, since the viscous form and
its natural boundary condition have also changed.

The penalty parameter itself
is not delicate: $\gamma_1$ between $0.1$ and $2$ moves the peak pressure
by under two per cent. Its presence is essential: at $\gamma_1 = 0$
with the strain-based form retained the computation degenerates outright, the
pressure collapsing to $1.50$ with no cavitation at all, consistent with the loss of strain control discussed in
Section~\ref{subsec:stokes-discretization}. Large $\gamma_1$
locks the element towards its conforming counterpart and is no better: at
$\gamma_1 = 100$ the peak pressure is $3.50$.

The solver behaviour also differs. For the proposed method the active set
repeats after eight, nine and ten iterations on the three meshes. For
full-gradient CR it repeats after nine iterations on the coarsest mesh, but
on the two finer meshes it never repeats: about $3200$ of $8192$ elements
remain cavitated, and the set changes only by elements on which both the
pressure and the divergence vanish to rounding error. Every such frozen
solution satisfies the Kuhn--Tucker conditions to rounding error, and the
sign rule stops the iteration after nine iterations on both meshes; on
$8192$ elements the result is the quadratic programming reference of
Section~\ref{subsec:solver-robustness}.

\subsection{Reynolds--Stokes Model Comparison}
\label{subsec:model-comparison}

The Reynolds model is not well suited to large variations in the geometry
of the lubrication layer, which is why we carry the Stokes model alongside
it. Comparisons of lubrication theory with Stokes flow have been made for
geometries with flow separation, where the loss of accuracy at steep
gradients is documented \cite{DeFa25}, and extended-lubrication and fast
Reynolds-solver studies characterise pressure and velocity errors over
textured geometries \cite{DeFa26Ext,DeFa26Fast}. Those works do not impose
the cavitation inequality. Both of our models enforce $p\geq 0$ and nothing
else, the Swift--Stieber condition, and neither conserves mass across the
cavitated zone.

We use the four pits \eqref{eq:common-pit-geometry} of
Table~\ref{tab:constitutive-law-comparison}. The Reynolds problem is solved
on a strip with $P$ prescribed only at the two ends, so that its solution
is the one-dimensional one, and the models are compared through
$p = 6\mu VP/c^2$. The Stokes end conditions must be chosen with care. For
a flat channel of gap $c$ the fully developed Couette solution is
\begin{equation}
\bm{u}(x,z)=(Vz/c,0),\qquad p=p\subrm{b} \label{eq:flat-couette-reference}
\end{equation}
for a constant ambient pressure $p\subrm{b}\geq0$. At a vertical end
$\sigma_{xx}=-p\subrm{b}$ but $\sigma_{zx}=\mu V/c$, so traction-free ends
do not admit Couette flow, even at $p\subrm{b}=0$. We therefore use the
pressure-normal-flow ends \eqref{eq:stokes-end-boundary}, $u_z=0$ and
$\bm{n}\cdot\bm{\sigma}(\bm{u},p)\bm{n}=-p\subrm{b}$, which admit
\eqref{eq:flat-couette-reference} and permit the normal flux required by
Remark~\ref{rem:outflow-flux}. For straight vertical ends and sufficiently
regular traces, $u_z=0$ gives $\partial_zu_z=0$, hence
$\partial_xu_x=\nabla\cdot\bm{u}$, and \eqref{eq:deviatoric-cauchy-stress}
gives
\begin{align}
p &= p\subrm{b}+\tfrac43\mu\,\nabla\cdot\bm{u} \label{eq:pressure-end-pressure}\\
0 &= p\,\nabla\cdot\bm{u}=p\subrm{b}\,\nabla\cdot\bm{u}+\tfrac43\mu(\nabla\cdot\bm{u})^2 \label{eq:pressure-end-complementarity}
\end{align}
so nonnegative divergence and $p\subrm{b}$ imply $\nabla\cdot\bm{u}=0$ and
$p=p\subrm{b}$ at the ends, the Reynolds end condition. On $128\times8$ and
$256\times16$ flat-channel meshes with $p\subrm{b}=0$ and $1$ the method
reproduces \eqref{eq:flat-couette-reference} to within $10^{-12}$. We take
$p\subrm{b}=0$ below.

The longitudinal spacing is $24/256$, with $16$ Stokes layers across the
mapped gap and four Reynolds strip layers. The Stokes problems are solved
by quadratic programming followed by working-set refinement. Pressures are
sampled on the mid-plane with spacing $0.000375$ in the window $4<x<20$,
using the upper trace of the discontinuous Stokes pressure. A sample is
cavitated when both Reynolds nodes of its interval belong to the
multiplier-active set, or when its Stokes element belongs to the inactive
pressure set. Every cavity reaches the upstream edge of the window, so we
report the reformation front $\xi=x\subrm{f}-x\subrm{p}$, with
$x\subrm{f}$ the rightmost cavitated sample, rather than a length. Fronts
lie on element edges and are resolved to one longitudinal cell, $0.094$.

\begin{table}[htbp]
\centering
\small
\begin{tabular}{rrrrrrrr}
\toprule
& & & \multicolumn{2}{c}{$\max p$} & & \multicolumn{2}{c}{front $\xi$} \\
$\delta$ & $r$ & $\delta/r$ & Reynolds & Stokes & profile diff. & Reynolds & Stokes \\
\midrule
0.5 & 8 & 0.0625 & 3.3653 & 3.3631 & 0.63\% & $-6.094$ & $-5.813$ \\
1.0 & 4 & 0.25   & 5.4940 & 5.4465 & 1.34\% & $-5.250$ & $-5.063$ \\
1.0 & 2 & 0.5    & 3.8098 & 3.7366 & 2.48\% & $-3.188$ & $-3.094$ \\
1.0 & 1 & 1.0    & 2.2158 & 2.1289 & 4.52\% & $-1.781$ & $-1.969$ \\
\bottomrule
\end{tabular}
\caption{The two models for the pits \eqref{eq:common-pit-geometry} with
pressure-normal-flow Stokes ends on $[0,24]$. The profile difference is
$100\|p_{\rm S}-p_{\rm R}\|_{L_\infty}/\max p_{\rm R}$ on the window
$4<x<20$, and $\xi$ is the reformation front relative to the pit centre.}
\label{tab:reynolds-stokes-comparison}
\end{table}

Table~\ref{tab:reynolds-stokes-comparison} and
Figure~\ref{fig:reynolds-stokes-pressure} show that with these ends the two
models agree closely. The profile difference grows with the slope, from
$0.63$ to $4.5$ per cent, and the fronts differ by one to three cells. The
Stokes front lies downstream of the Reynolds one for $\delta/r\leq 0.5$ and
upstream of it for $\delta/r=1$.
Figure~\ref{fig:stokes-reversed-flow} shows reversed flow in the depth of
the steepest pit, which the gap-averaged Reynolds model cannot represent.

\begin{figure}[!htb]
\centering
\includegraphics[width=0.6\textwidth]{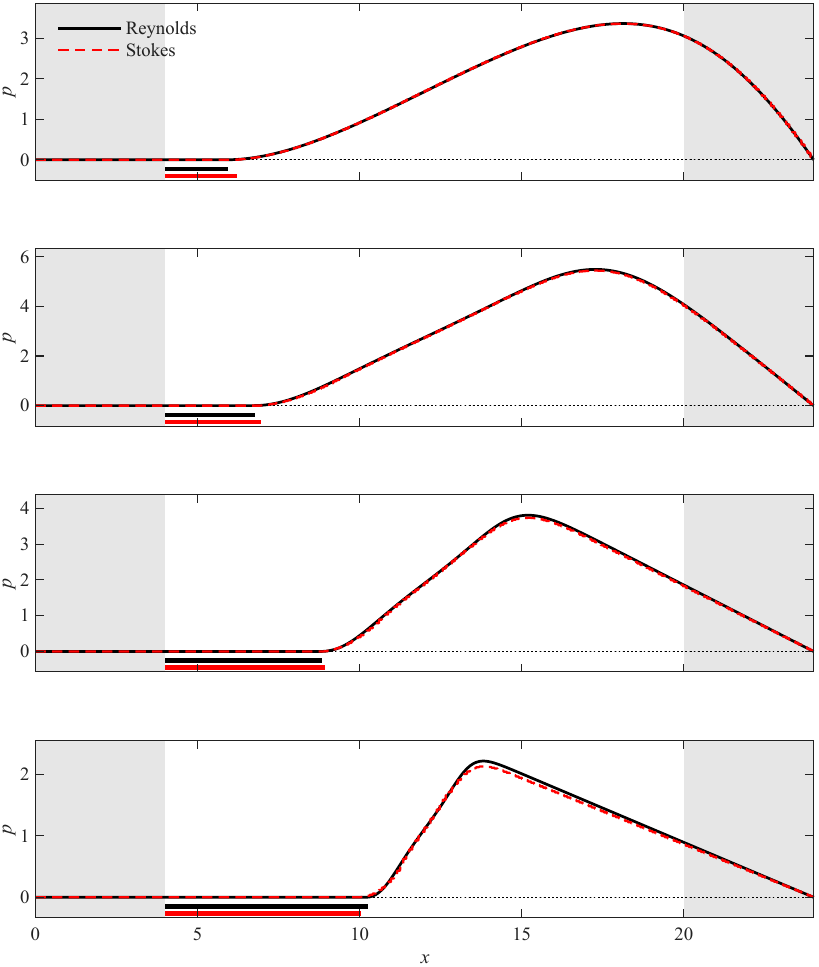}
\caption{Pressure on the mid plane for the four pits of
Table~\ref{tab:reynolds-stokes-comparison}, with the steepness increasing
downwards: $\delta/r = 0.0625$, $0.25$, $0.5$ and $1$. The shaded strips lie
outside the observation window. The bars below each profile mark the
cavitated samples, black for the Reynolds model and red for the Stokes one.}
\label{fig:reynolds-stokes-pressure}
\end{figure}

\begin{figure}[!htb]
\centering
\includegraphics[width=0.9\textwidth]{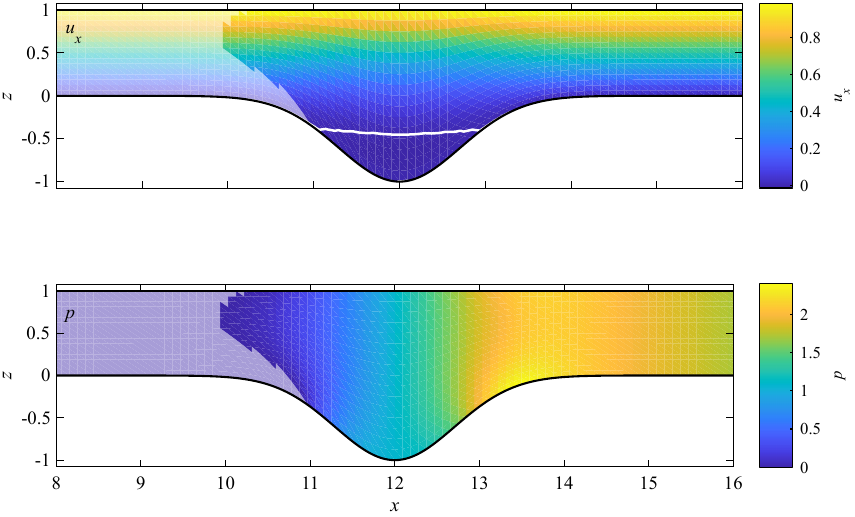}
\caption{The Stokes solution near the steepest pit, $\delta/r = 1$, with
pressure-normal-flow ends and the sliding surface above moving to the
right. Above, the horizontal velocity, the white line being the contour
$u_x = 0$ that bounds the reversed flow in the pit. Below, the pressure.
The cavitated region is lightened in both.}
\label{fig:stokes-reversed-flow}
\end{figure}

For the steepest pit an independent one-dimensional Reynolds calculation
supplies a front reference. In the positive-pressure region, integrating the
Reynolds equation and imposing $P'(x\subrm{f})=0$ gives
$H^3P'=H-H(x\subrm{f})$, so the front satisfies
\begin{equation}
\int_{x_{\mathrm f}}^{b}
\frac{H(x)-H(x_{\mathrm f})}{H(x)^3}\,\mathrm{d}x=0 \label{eq:reynolds-front-quadrature}
\end{equation}
where $b$ is the outlet location. Quadrature and a scalar root solve give
$\xi_{\mathrm R}=-1.81427$ for $b=24$. We halve the longitudinal step in
$6\leq x\leq18$ and double the number of transverse layers twice, add a
globally refined mesh, and shift the finest local grid by half a cell;
Table~\ref{tab:pressure-end-refinement} reports the results.

\begin{table}[htbp]
\centering
\small
\begin{tabular}{lrrrrr}
\toprule
mesh & Stokes elements & local \(h_x\) & \(\xi_{\mathrm R}\) & \(\xi_{\mathrm S}\) & Stokes peak \\
\midrule
reference & 8192   & \(3/32\)  & \(-1.7814\) & \(-1.9689\) & 2.128880 \\
local     & 24576  & \(3/64\)  & \(-1.8283\) & \(-1.9689\) & 2.125899 \\
local     & 81920  & \(3/128\) & \(-1.8050\) & \(-1.9689\) & 2.125309 \\
global    & 131072 & \(3/128\) & \(-1.8050\) & \(-1.9689\) & 2.125334 \\
shifted local & 82048 & \(3/128\) & \(-1.8166\) & \(-1.9573\) & 2.125267 \\
\bottomrule
\end{tabular}
\caption{Pressure-normal-flow end conditions on \([0,24]\). Stokes
uses 16, 32, and 64 transverse layers; the Reynolds strip is refined
correspondingly. The global row also refines the outer longitudinal cells; the final row
shifts the finest local grid by half a cell, retaining the refinement
interval endpoints.}
\label{tab:pressure-end-refinement}
\end{table}

The sampled Stokes front is unchanged on the nested meshes and on the
globally refined one, while the finite element Reynolds front approaches the
quadrature reference, with final error $0.0093$. The half-cell shift moves
the Stokes front by $0.0116$ and brings the Reynolds front within $0.0024$ of
its reference. The fine-grid Stokes fronts thus lie $0.143$ to $0.155$ gap
heights upstream of the one-dimensional Reynolds front on this domain. This
range is an observed grid sensitivity, not an error bound.

\begin{table}[htbp]
\centering
\small
\begin{tabular}{rrrrrrrr}
\toprule
\multicolumn{2}{c}{domain} & \multicolumn{2}{c}{Reynolds} &
\multicolumn{2}{c}{Stokes, traction-free} & \multicolumn{2}{c}{Stokes, pressure-normal-flow} \\
left & right & $\max p$ & $\xi$ & $\max p$ & cav.\ length & $\max p$ & $\xi$ \\
\midrule
0   & 24 & 2.2158 & $-1.781$ & 2.1473 & 0.843 & 2.1289 & $-1.969$ \\
$-12$ & 24 & 2.2158 & $-1.781$ & 2.1471 & 1.218 & 2.1289 & $-1.969$ \\
0   & 36 & 2.4018 & $-1.969$ & 2.3488 & 0.000 & 2.3353 & $-2.438$ \\
$-12$ & 36 & 2.4018 & $-1.969$ & 2.3474 & 0.000 & 2.3353 & $-2.438$ \\
$-36$ & 60 & 2.5115 & $-2.156$ & 2.4659 & 0.000 & 2.4561 & --- \\
\bottomrule
\end{tabular}
\caption{End sensitivity for the steepest pit, with identical meshes on
$[0,24]$ and cells appended beyond it, and the fixed window $4<x<20$. The
traction-free cavity length uses the sampling of
Section~\ref{subsec:constitutive-discretization}; a dash or zero length
means that no mid-plane sample in the window is cavitated.}
\label{tab:domain-length-sensitivity}
\end{table}

Table~\ref{tab:domain-length-sensitivity} shows how strongly the Stokes
cavity depends on the ends. With traction-free ends the windowed Stokes
cavity length on $[0,24]$ is $0.843$, against $6.219$ for Reynolds, a ratio
of $7.4$. It grows by $44$ per cent when only the inlet is moved and leaves
the window when the outlet is moved. With pressure-normal-flow ends the
inlet location has no effect. Moving the outlet shifts both fronts upstream,
the Reynolds front by two cells and the Stokes front by five on $[0,36]$,
and on $[-36,60]$ the Stokes mid-plane cavity leaves the window, whose
minimum sampled pressure is $4.6\cdot10^{-3}$. Cavity predictions must
therefore be reported together with the end conditions, the end locations
and the observation window.

\subsection{Solver Robustness}
\label{subsec:solver-robustness}

The full-gradient Crouzeix--Raviart computation of the steepest pit,
without a jump penalty and with traction-free ends, is the case in which
the set of the active-set iteration does not repeat.
Table~\ref{tab:smoothed-newton-performance} compares the active-set
iteration stopped by the sign rule with the smoothed Newton method of
Section~\ref{subsec:smoothed-central-path} at $s=10^{-8}$, undamped and
started from the boundary data and $p^h=0$. For the smoothed solutions a
sample is cavitated when $p_T<(s/\gamma)^{1/2}$: on the central path $p_T$
and $(\nabla\cdot\bm{u}^h)_T$ are both positive with product $s/\gamma$, so
the two populations are separated by the geometric mean.

\begin{table}[htbp]
\centering
\small
\begin{tabular}{rrrrrrr}
\toprule
& \multicolumn{3}{c}{active set, sign rule} & \multicolumn{3}{c}{smoothed Newton, $s=10^{-8}$} \\
elements & its & $\max p^h$ & cav.\ length & its & $\max p^h$ & cav.\ length \\
\midrule
2048  & 8 & 1.98695 & 5.937 & 11 & 1.98695 & 2.439 \\
8192  & 9 & 2.09338 & 5.937 & 12 & 2.09338 & 2.346 \\
32768 & 9 & 2.12389 & 5.889 & 12 & 2.12389 & 2.343 \\
\bottomrule
\end{tabular}
\caption{The steepest pit, $\delta/r=1$, of \eqref{eq:common-pit-geometry},
with the full-gradient Crouzeix--Raviart pair, no jump penalty, traction-free
ends and $\gamma=100$. The active-set iteration uses the sign rule with
$\tau=10^{-12}$, and the Newton residual tolerance is $10^{-10}$.}
\label{tab:smoothed-newton-performance}
\end{table}

An independent reference is obtained by assembling the full-gradient
matrices separately and solving the convex velocity problem by quadratic
programming with working-set refinement; its formulation and tolerance
checks are included with the reproduction materials. On $2048$ and $8192$
elements the references have windowed peak pressures $1.98695238$ and
$2.09338476$ and cavity length $5.937$, and the active-set solutions agree
with them to all eight digits of the peak and in every cavitated sample.

The smoothed Newton method converged in every one of $48$ computations,
over four meshes, six values of $s$ from $10^{-2}$ to $10^{-12}$ and the
two steeper pits, with a superlinear residual decrease of observed order
$1.37$ to $1.59$; this is consistent with the remainder estimate of
\cite{HaLaLa25}, in which the radius of quadratic convergence shrinks like
$s^{1/2}$. Its pressure is accurate, but its cavity is not.

\begin{table}[htbp]
\centering
\small
\begin{tabular}{rrrrr}
\toprule
\(s\) & Newton & cav.\ length & length error (\%) & relative \(L_2\) pressure error \\
\midrule
\(10^{-8}\)  & 14 & 2.346 & 60.49 & \(4.37\cdot10^{-5}\) \\
\(10^{-10}\) & 13 & 3.189 & 46.29 & \(3.75\cdot10^{-6}\) \\
\(10^{-12}\) & 13 & 3.939 & 33.65 & \(3.16\cdot10^{-7}\) \\
\(10^{-14}\) & 13 & 4.689 & 21.02 & \(2.61\cdot10^{-8}\) \\
\(10^{-16}\) & 12 & 5.439 & 8.39  & \(2.09\cdot10^{-9}\) \\
\(10^{-18}\) & 10 & 5.937 & 0.00  & \(1.62\cdot10^{-10}\) \\
\(10^{-20}\) & 9  & 5.937 & 0.00  & \(1.19\cdot10^{-11}\) \\
\bottomrule
\end{tabular}
\caption{Smoothed steep-pit solutions compared with the unsmoothed QP
reference on 8192 elements, with \(\gamma=100\) and Newton residual
tolerance \(10^{-14}\). Pressure errors use the element-area-weighted
\(L_2\) norm over the entire domain. Cavity diagnostics use the same
mid-plane window and sampling step \(0.003\) as
Table~\ref{tab:smoothed-newton-performance}.}
\label{tab:smoothing-qp-reference}
\end{table}

Table~\ref{tab:smoothing-qp-reference} separates pressure accuracy from
cavity accuracy. At $s=10^{-12}$ the relative pressure error is
$3.2\cdot10^{-7}$ while the cavity is $34$ per cent too short, and the
sampled cavity set agrees with the reference only for $s\leq10^{-18}$, where
the central-path products are below rounding error. For this problem the
smoothed method is therefore no substitute for the active-set iteration
with the sign rule. For the nodal Reynolds method $\gamma$ cancels and the
active-set iteration reached a fixed point in every computation reported
here, so smoothing would only reintroduce a parameter. Further tolerance
checks are included with the reproduction materials.


\section{Concluding Remarks}
\label{sec:conclusions}

We have proposed augmented Lagrangian finite element methods for cavitation
in the Reynolds and Stokes models in which the discrete complementarity
conditions hold exactly and the augmentation parameter drops out. At the
continuous level both formulations are equivalent to their Kuhn--Tucker
systems. For Stokes flow the pressure multiplier also requires a part of the
boundary on which the normal velocity is free; it makes the divergence
surjective onto $L_2(\Omega)$ and supplies the constraint qualification, and
without it the problem reduces to incompressible Stokes flow.

Complementarity becomes exact when the multiplier equation can be tested
pointwise. For the Reynolds model nodal quadrature achieves this with
continuous piecewise linear pressure and multiplier, and the classical first-order
energy estimate follows. For the Stokes model the Crouzeix--Raviart
velocity with piecewise constant pressure gives an elementwise constant
multiplier residual, and the identity $\nabla\cdot\vec{V}^h=Q^h$ gives
nondegenerate pressure coupling. Unlike the customary incompressible stress,
which places a dilating cavitated point under tension, the deviatoric
stress makes the constrained scalar the mechanical pressure. The
jump-stabilised Crouzeix--Raviart method is stable for it in two and three
dimensions, the latter by the trace-free Korn estimate of the appendix, and
since the interpolant of a feasible velocity is discretely feasible, the
exact constraint also gives a first-order error estimate for velocity and
pressure.

The computations show that close pressure profiles do not imply close
cavity predictions. With traction-free ends the Stokes cavity of the
steepest pit is a seventh of the Reynolds one and changes with the domain
length. With ends calibrated against flat Couette flow the reformation
fronts of the two models lie within three longitudinal cells of each other
for all four pits, and on refined meshes the Stokes front of the steepest
pit lies about $0.15$ gap heights upstream of the one-dimensional Reynolds
reference. The active-set iteration can fail to repeat its set on
degenerate problems although its iterates have converged, and a stopping
rule on the sign conditions then terminates it. A smoothed Newton method
converges as well, but its cavity is accurate only for very small smoothing
parameters.

Extensions under consideration include estimates for the position of the
free boundary and mass-conserving cavitation models of Elrod--Adams type,
which require coupling the pressure constraint to saturation and transport.

\appendix

\section{A Boundary-Localised Piecewise-Linear Trace-Free Korn Estimate}
\label{sec:broken-korn-appendix}

This appendix
supplies a direct boundary-localised estimate for the piecewise linear
fields used by the method \eqref{eq:cr-stabilized-form}; the constant is uniform in $h$, and no lower
bound on $\gamma_1$ appears. The general three-dimensional discrete
trace-free Korn framework is due to Williams and Hong \cite{WiHo24}. Their
results cover piecewise $H^1$ and $H^2$ fields using projected jumps and an
auxiliary seminorm. The argument below is a CR/$\mathbb{P}_1$
specialisation with the stronger full jump norm and with boundary control
placed directly on $\Gamma\suprm{D}$.
Throughout, $n=3$, $\Omega$ is a bounded connected Lipschitz domain, the
mesh is a shape-regular tetrahedral partition of $\Omega$ with
$\Gamma\suprm{D}$ a union of its faces, every vertex patch is face
connected, and the essential boundary conditions eliminate every global
conformal Killing field. A relatively open planar patch in
$\Gamma\suprm{D}$ is sufficient for the latter condition. The symbol
$\nabla_h$ denotes the elementwise gradient, and the face sums run
over the interior faces and those on $\Gamma\suprm{D}$, with
$\jump{\bm{v}}$ the trace on the latter, as in \eqref{eq:cr-stabilized-form}. We
abbreviate the square of the norm in \eqref{eq:cr-coercivity} with $\bfeps$
replaced by the trace-free strain,
\begin{equation}
\Xi(\bm{v})^2 := \sum_{T\in\mathcal{T}^h}
\|\bfeps^{\rm dev}(\bm{v})\|_T^2
+\sum_{F}h_F^{-1}\|\jump{\bm{v}}\|_F^2
\label{eq:broken-trace-free-energy}
\end{equation}

In three dimensions the kernel of $\bfeps^{\rm dev}$ on a connected
domain is the ten-dimensional space of conformal Killing fields
\begin{equation}\label{eq:conformal-killing-fields}
\bm{w}(x) = \bm{a}+\bm{b}\times x+\vartheta\,x
+2(\bm{\zeta}\cdot x)\,x-\vert x\vert^2\bm{\zeta}
\end{equation}
see \cite{Da06}. The linear ones, $\bm{\zeta}=\bm{0}$, are the similarity
fields, a seven-dimensional space, and it is only these that can occur
elementwise in a piecewise linear function; this is what makes the
piecewise linear case elementary.

\begin{lem}\label{lem:conformal-field-plane}
A conformal Killing field vanishing on an open subset of a plane vanishes
identically.
\end{lem}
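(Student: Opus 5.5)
The plan is to use the explicit form \eqref{eq:conformal-killing-fields} of a conformal Killing field and show that vanishing on a planar open set forces all ten parameters to vanish. After a translation and rotation, which map conformal Killing fields to conformal Killing fields and only change the parameters, we may assume that the plane is $\{x_3=0\}$ and that the open subset $U$ contains a neighbourhood of the origin in that plane. Every field in \eqref{eq:conformal-killing-fields} is a polynomial of degree at most two. Vanishing on an open subset of the plane therefore means that its restriction to $\{x_3=0\}$, a polynomial in $(x_1,x_2)$, vanishes identically, so each of its coefficients must be zero.

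Next I will compute the restriction with $x=(x_1,x_2,0)$. The constant term is $\bm{a}$, so $\bm{a}=\bm{0}$. In the quadratic terms, the third component is $2(\zeta_1x_1+\zeta_2x_2)\cdot 0-(x_1^2+x_2^2)\zeta_3=-\zeta_3(x_1^2+x_2^2)$, which forces $\zeta_3=0$. The first component is $2(\zeta_1x_1+\zeta_2x_2)x_1-(x_1^2+x_2^2)\zeta_1=\zeta_1(x_1^2-x_2^2)+2\zeta_2x_1x_2$, so $\zeta_1=\zeta_2=0$. Hence $\bm{\zeta}=\bm{0}$.

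For the linear terms, write $\bm{b}\times x=(-b_3x_2,\;b_3x_1,\;b_1x_2-b_2x_1)$ on the plane. The first two components of $\bm{b}\times x+\vartheta x$ are $\vartheta x_1-b_3x_2$ and $b_3x_1+\vartheta x_2$, which vanish only if $\vartheta=b_3=0$. The third component is $b_1x_2-b_2x_1$, which gives $b_1=b_2=0$. All ten parameters therefore vanish and the field is zero on all of $\mathbb{R}^3$.

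There is no substantial obstacle here. The only care needed is in the reduction step: the rigid change of coordinates has to preserve the class \eqref{eq:conformal-killing-fields}. This holds because $\bfeps^{\rm dev}$ transforms covariantly under rigid motions, so the kernel is invariant. Alternatively, one can avoid the reduction and argue directly, noting that the polynomial vanishes on a nonempty open subset of an affine plane and hence on the whole plane. The whole argument also relies on the fact, cited from \cite{Da06}, that \eqref{eq:conformal-killing-fields} describes the full kernel.
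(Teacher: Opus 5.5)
Your proof is correct and follows the paper's route: you reduce to the plane $x_3=0$ using invariance of the class under rigid motions, then eliminate $\bm{\zeta}$ from the quadratic parts of the third and first components, exactly as the paper does. The only difference is in the remaining linear field $\bm{a}+\mathsf{R}x$: the paper puts the two tangent directions in the kernel of $\mathsf{R}$ and uses its eigenvalues $\vartheta$, $\vartheta\pm i\vert\bm{b}\vert$ to force $\vartheta=0$ and $\bm{b}=\bm{0}$, whereas you read off the coefficients directly, which is equally valid and slightly more elementary.
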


\begin{proof}
The restriction of \eqref{eq:conformal-killing-fields} to a plane is polynomial in two
variables, so vanishing on an open subset of the plane is vanishing on
all of it. Since $\bfeps^{\rm dev}$ transforms tensorially under rigid
motions, the class \eqref{eq:conformal-killing-fields} is invariant under them, and we may
take the plane to be $x_3=0$. The quadratic part of the third
component of \eqref{eq:conformal-killing-fields} there is $-(x_1^2+x_2^2)\zeta_3$, which forces
$\zeta_3=0$; the quadratic part of the first component is then
$\zeta_1x_1^2+2\zeta_2x_1x_2-\zeta_1x_2^2$, which forces $\zeta_1=\zeta_2=0$, and the field
is linear, $\bm{w}(x) = \bm{a}+\mathsf{R}x$ with
$\mathsf{R}x = \bm{b}\times x+\vartheta\,x$. Vanishing on the plane puts its two
tangent directions in the kernel of $\mathsf{R}$, while the eigenvalues
of $\mathsf{R}$ are $\vartheta$ and $\vartheta\pm i\vert\bm{b}\vert$; a two-dimensional
kernel requires zero to be an eigenvalue of algebraic multiplicity at
least two, which forces $\vartheta=0$ and $\bm{b}=\bm{0}$, whence
$\mathsf{R}=0$, and evaluating at any point of the plane,
$\bm{a}=\bm{0}$.
\end{proof}

We assume of $\Gamma\suprm{D}$, beyond positive measure, that no nonzero
conformal Killing field vanishes on it. By Lemma~\ref{lem:conformal-field-plane} this
holds whenever $\Gamma\suprm{D}$ contains a flat piece of positive area,
as it does in every geometry computed in this paper.

\begin{lem}\label{lem:continuous-trace-free-korn}
Under this assumption there is a constant $C$, depending on $\Omega$ and
$\Gamma\suprm{D}$, such that
\begin{equation}\label{eq:continuous-trace-free-korn}
\|\nabla\bm{u}\|_\Omega^2\leq
C\left(\|\bfeps^{\rm dev}(\bm{u})\|_\Omega^2
+\|\bm{u}\|_{L_2(\Gamma\suprm{D})}^2\right)
\qquad\forall\,\bm{u}\in[H^1(\Omega)]^3
\end{equation}
\end{lem}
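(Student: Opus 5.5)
The plan is the Peetre--Tartar compactness argument, built on Dain's trace-free Korn inequality \cite{Da06}. For a bounded connected Lipschitz domain in three dimensions, that inequality gives
\begin{equation*}
\|\nabla\bm{u}\|_\Omega\leq C\left(\|\bfeps^{\rm dev}(\bm{u})\|_\Omega+\|\bm{u}\|_\Omega\right)
\qquad\forall\,\bm{u}\in[H^1(\Omega)]^3 .
\end{equation*}
We take this as the starting point. Equivalently, $\|\bm{u}\|_{H^1}$ is controlled by $\|\bfeps^{\rm dev}(\bm{u})\|_\Omega+\|\bm{u}\|_\Omega$, and the kernel of $\bfeps^{\rm dev}$ in $[H^1(\Omega)]^3$ is the finite-dimensional space \eqref{eq:conformal-killing-fields}. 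Connectedness of $\Omega$ enters here: a field with vanishing trace-free strain solves the conformal Killing equation, hence is smooth by ellipticity and is given globally by \eqref{eq:conformal-killing-fields}.

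Next we argue by contradiction. Suppose no constant $C$ works. Then there is a sequence $\bm{u}_k$ with
\begin{equation*}
\|\nabla\bm{u}_k\|_\Omega=1, \qquad \|\bfeps^{\rm dev}(\bm{u}_k)\|_\Omega+\|\bm{u}_k\|_{L_2(\Gamma\suprm{D})}\to0 .
\end{equation*}
We first need a bound on $\|\bm{u}_k\|_\Omega$, which the hypothesis does not give directly. We therefore run the argument on the quotient by constants. Write $\bm{u}_k=\bm{c}_k+\bm{w}_k$ with $\bm{w}_k$ of zero mean. By Poincar\'e, $\bm{w}_k$ is bounded in $H^1$.

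By Rellich, a subsequence of $\bm{w}_k$ converges in $L_2(\Omega)$, and also in $L_2(\partial\Omega)$ by compactness of the trace $H^1\to L_2(\partial\Omega)$. Dain's inequality applied to differences $\bm{w}_k-\bm{w}_l$ makes the sequence Cauchy in $H^1$, with limit $\bm{w}$ satisfying $\|\nabla\bm{w}\|_\Omega=1$ and $\bfeps^{\rm dev}(\bm{w})=0$. Hence $\bm{w}$ is a conformal Killing field.

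It remains to control the constants. Since $\|\bm{c}_k+\bm{w}_k\|_{L_2(\Gamma\suprm{D})}\to0$ and $\bm{w}_k$ converges in $L_2(\Gamma\suprm{D})$, the constants $\bm{c}_k$ are bounded, because $|\Gamma\suprm{D}|>0$. A further subsequence gives $\bm{c}_k\to\bm{c}$. Then $\bm{c}+\bm{w}$ is a conformal Killing field vanishing on $\Gamma\suprm{D}$.

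By the standing assumption, justified by Lemma~\ref{lem:conformal-field-plane} when $\Gamma\suprm{D}$ contains a planar patch, $\bm{c}+\bm{w}=0$. But then $\nabla\bm{w}=0$, which contradicts $\|\nabla\bm{w}\|_\Omega=1$. Squaring the resulting inequality gives \eqref{eq:continuous-trace-free-korn}.

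The main obstacle is the first step, Dain's inequality itself on general Lipschitz domains. We do not reprove it. In dimension three the operator $\bfeps^{\rm dev}$ is overdetermined elliptic, and the inequality follows from Ne\v{c}as' lemma (the norms $\|q\|_{L_2}$ and $\|q\|_{H^{-1}}+\|\nabla q\|_{H^{-1}}$ are equivalent) applied to the identity expressing second derivatives of $\bm{u}$ as first derivatives of $\bfeps^{\rm dev}(\bm{u})$, as in \cite{Da06}. The only other delicate point is that $\bfeps^{\rm dev}(\bm{w})=0$ in $L_2$ forces $\bm{w}$ to lie in \eqref{eq:conformal-killing-fields}, which also follows from that identity, since it gives vanishing third derivatives in the distributional sense.
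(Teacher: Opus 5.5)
Your proof is correct and is essentially the paper's: both run the Peetre--Tartar compactness argument on top of Dain's inequality. The paper first derives the quotient estimate modulo the conformal Killing space and then uses that $(\|\nabla\bm{w}\|_\Omega^2+\|\bm{w}\|_{\Gamma\suprm{D}}^2)^{1/2}$ is a norm on that finite-dimensional space. You instead quotient only by constants, extract an $H^1$-convergent subsequence directly via Poincar\'e, Rellich and Dain, and bound the constants using $\vert\Gamma\suprm{D}\vert>0$.

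One correction to your closing aside, which the proof does not use. No identity expresses $\nabla^2\bm{u}$ through $\nabla\bfeps^{\rm dev}(\bm{u})$ alone. The field $2(\bm{\zeta}\cdot x)x-\vert x\vert^2\bm{\zeta}$ has zero trace-free strain but nonzero Hessian, and such an identity would make the kernel affine, contradicting your own third-derivative remark. In fact $\nabla^2\bm{u}$ involves both $\nabla\bfeps^{\rm dev}(\bm{u})$ and $\nabla(\nabla\cdot\bm{u})$. It is $\nabla^2(\nabla\cdot\bm{u})$, hence $\nabla^3\bm{u}$, that is controlled by second derivatives of $\bfeps^{\rm dev}(\bm{u})$, and only for $n\geq3$. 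So a Ne\v{c}as-type proof of Dain's inequality cannot rest on a single second-order identity.
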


\begin{proof}
The inequality
$\|\bm{u}\|_{H^1(\Omega)}\leq
C(\|\bfeps^{\rm dev}(\bm{u})\|_\Omega+\|\bm{u}\|_\Omega)$, proved for
bounded Lipschitz domains in \cite{Da06}, together with
the finite dimensionality of the kernel \eqref{eq:conformal-killing-fields} and the
compactness of $H^1(\Omega)$ in $L_2(\Omega)$, gives by the lemma of
Peetre and Tartar the quotient form
\begin{equation}\label{eq:conformal-killing-quotient}
\min_{\bm{w}\in CK}\|\bm{u}-\bm{w}\|_{H^1(\Omega)}
\leq C\,\|\bfeps^{\rm dev}(\bm{u})\|_\Omega
\end{equation}
$CK$ denoting the space \eqref{eq:conformal-killing-fields}. Suppose now that
\eqref{eq:continuous-trace-free-korn} fails. Then there are $\bm{u}_k$ with
$\|\nabla\bm{u}_k\|_\Omega = 1$ and
$\|\bfeps^{\rm dev}(\bm{u}_k)\|_\Omega
+\|\bm{u}_k\|_{\Gamma\suprm{D}}\to 0$. Let $\bm{w}_k\in CK$ attain
\eqref{eq:conformal-killing-quotient} for $\bm{u}_k$ and put
$\bm{r}_k := \bm{u}_k-\bm{w}_k$, so that
$\|\bm{r}_k\|_{H^1(\Omega)}\to 0$. Then
$\|\nabla\bm{w}_k\|_\Omega\to 1$ and, by the trace inequality,
$\|\bm{w}_k\|_{\Gamma\suprm{D}}\leq
\|\bm{u}_k\|_{\Gamma\suprm{D}}+C\|\bm{r}_k\|_{H^1(\Omega)}\to 0$. On the
finite-dimensional space $CK$ the expression
$(\|\nabla\bm{w}\|_\Omega^2+\|\bm{w}\|_{\Gamma\suprm{D}}^2)^{1/2}$ is a
norm, since its kernel consists of constant fields vanishing on
$\Gamma\suprm{D}$, so the sequence $\bm{w}_k$ is bounded in $CK$ and a
subsequence converges to some $\bm{w}$ with
$\|\nabla\bm{w}\|_\Omega = 1$ and $\bm{w} = \bm{0}$ on
$\Gamma\suprm{D}$. The assumption on $\Gamma\suprm{D}$ makes
$\bm{w} = \bm{0}$, a contradiction.
\end{proof}

\begin{lem}\label{lem:oswald-averaging}
Let $\bm{w}$ be piecewise linear on the mesh and let $I_h\bm{w}$ be the
continuous piecewise linear field whose value at each mesh vertex is the
average of the values of $\bm{w}$ on the elements meeting that vertex.
If the elements meeting any one vertex form a face connected set, as they
do for the meshes used here, then
\begin{equation}\label{eq:oswald-estimate}
\sum_{T}\left(h_T^{-2}\|\bm{w}-I_h\bm{w}\|_T^2
+\|\nabla(\bm{w}-I_h\bm{w})\|_T^2\right)
\lesssim \sum_{F\ {\rm interior}}h_F^{-1}\|\jump{\bm{w}}\|_F^2
\end{equation}
with a constant depending only on the shape regularity; cf.\
\cite{KaPa03} for the general piecewise polynomial case.
\end{lem}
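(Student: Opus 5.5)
The plan is the standard Oswald-averaging argument: localise to vertex patches, bound the nodal deviation of $\bm{w}$ from its average by face jumps, and then sum over elements using shape regularity.

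Fix an element $T$ and write $\bm{w}_T:=\bm{w}\vert_T$. Since $\bm{w}_T-I_h\bm{w}$ is linear on $T$, equivalence of norms on $[\mathbb{P}_1(T)]^3$ together with a scaling argument gives
\begin{equation*}
h_T^{-2}\|\bm{w}-I_h\bm{w}\|_T^2+\|\nabla(\bm{w}-I_h\bm{w})\|_T^2
\lesssim h_T^{n-2}\sum_{z\ \text{vertex of}\ T}\vert\bm{w}_T(z)-I_h\bm{w}(z)\vert^2 .
\end{equation*}
The constant depends only on shape regularity. By definition, $I_h\bm{w}(z)$ is the mean of $\bm{w}_{T'}(z)$ over the patch $\omega_z$ of elements containing $z$. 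Hence $\vert\bm{w}_T(z)-I_h\bm{w}(z)\vert\leq\max_{T'\subset\omega_z}\vert\bm{w}_T(z)-\bm{w}_{T'}(z)\vert$.

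Face connectivity of $\omega_z$ is used next. We join $T$ to $T'$ by a chain $T=T_0,T_1,\dots,T_m=T'$ in which consecutive elements share a face $F_j$ containing $z$. The length $m$ is bounded by the number of elements in a patch, which shape regularity controls. The telescoping sum and Cauchy--Schwarz then bound $\vert\bm{w}_T(z)-\bm{w}_{T'}(z)\vert^2$ by $m\sum_j\vert\jump{\bm{w}}_{F_j}(z)\vert^2$.

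Each jump $\jump{\bm{w}}_{F_j}$ is a linear function on the face $F_j$. Its value at a vertex of $F_j$ is therefore controlled by a face inverse estimate, $\vert\jump{\bm{w}}(z)\vert^2\lesssim\vert F\vert^{-1}\|\jump{\bm{w}}\|_F^2\sim h_F^{1-n}\|\jump{\bm{w}}\|_F^2$. Multiplying by $h_T^{n-2}$ and using $h_T\sim h_F$ (both comparable by shape regularity, since $h_F=(\vert T^+\vert+\vert T^-\vert)/(2\vert F\vert)$) yields the bound $h_F^{-1}\|\jump{\bm{w}}\|_F^2$. Only interior faces enter, because every chain stays inside $\Omega$.

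It remains to sum over $T$ and its vertices. Each interior face appears only in patches of its own vertices, and each patch contains a bounded number of elements. Every face term is therefore counted a uniformly bounded number of times, which gives \eqref{eq:oswald-estimate}.

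The main point requiring care is the chaining step. Without face connectivity, two elements sharing only an edge or a vertex could not be linked through face jumps, and the nodal discrepancy would be uncontrolled. This is exactly why the hypothesis appears in the statement. The bound on the chain length and the overlap counting both reduce to shape regularity.
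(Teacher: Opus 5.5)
Your proposal is correct and takes essentially the same route as the paper. Both use nodal norm equivalence on each element, write the nodal deviation as a mean of differences over the vertex patch, telescope along face-connected chains of interior faces through the vertex, apply a face inverse estimate, and finish with bounded overlap; the only cosmetic difference is that you get the gradient term directly from the scaled norm equivalence, where the paper derives it afterwards from an inverse inequality.
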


\begin{proof}
On $T$ the error $\bm{w}-I_h\bm{w}$ is linear, so equivalence of norms on
shape-regular elements gives
$\|\bm{w}-I_h\bm{w}\|_T^2\lesssim
h_T^3\sum_{p}\vert(\bm{w}\vert_T-I_h\bm{w})(p)\vert^2$, the sum running
over the four vertices of $T$. Fix a vertex $p$ and write $\omega_p$ for
the set of elements meeting it, of cardinality bounded by the shape
regularity. By the definition of the average,
$(\bm{w}\vert_T-I_h\bm{w})(p)$ is a mean of the differences
$(\bm{w}\vert_T-\bm{w}\vert_{T'})(p)$ over $T'\in\omega_p$, and each
difference telescopes along a chain of face neighbours within $\omega_p$,
each step being the value at $p$ of the jump across an interior face
containing $p$. For a linear function on a face, equivalence of norms
again gives
$\vert\jump{\bm{w}}(p)\vert^2\lesssim h_F^{-2}\|\jump{\bm{w}}\|_F^2$.
Combining, $h_T^{-2}\|\bm{w}-I_h\bm{w}\|_T^2\lesssim
h_T\sum h_F^{-2}\|\jump{\bm{w}}\|_F^2\approx
\sum h_F^{-1}\|\jump{\bm{w}}\|_F^2$, the sums running over the interior
faces of the patches of the vertices of $T$; summing over $T$, each face
appears a bounded number of times, which is the first part of
\eqref{eq:oswald-estimate}. The gradient part follows from the inverse inequality
$\|\nabla(\bm{w}-I_h\bm{w})\|_T\lesssim h_T^{-1}\|\bm{w}-I_h\bm{w}\|_T$
for linear functions.
\end{proof}

\begin{thm}\label{thm:broken-trace-free-korn}
Under the domain, mesh-connectivity, and boundary-kernel assumptions stated
above, let $\bm{v}$ be piecewise linear on the mesh; in particular
$\bm{v}$ may be any member of $\vec{V}^h$. Then
\begin{equation}\label{eq:broken-trace-free-korn}
\|\nabla_h\bm{v}\|_\Omega^2\leq C\,\Xi(\bm{v})^2
\end{equation}
with $C$ depending only on the shape regularity of the mesh, on $\Omega$
and on $\Gamma\suprm{D}$.
\end{thm}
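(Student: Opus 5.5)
Split $\bm{v}$ into a conforming part and a nonconforming remainder using the averaging operator of Lemma~\ref{lem:oswald-averaging}. Write $\bm{v}=I_h\bm{v}+(\bm{v}-I_h\bm{v})$. Then $I_h\bm{v}$ is continuous piecewise linear, so it lies in $[H^1(\Omega)]^3$. By Lemma~\ref{lem:oswald-averaging} the remainder satisfies
\begin{equation*}
\|\nabla_h(\bm{v}-I_h\bm{v})\|_\Omega^2\lesssim\sum_{F\ {\rm interior}}h_F^{-1}\|\jump{\bm{v}}\|_F^2\leq\Xi(\bm{v})^2 .
\end{equation*}
It therefore suffices to bound $\|\nabla I_h\bm{v}\|_\Omega^2$ by $C\,\Xi(\bm{v})^2$. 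The triangle inequality then gives \eqref{eq:broken-trace-free-korn}.

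For the conforming part, apply the continuous trace-free Korn inequality of Lemma~\ref{lem:continuous-trace-free-korn}:
\begin{equation*}
\|\nabla I_h\bm{v}\|_\Omega^2\leq C\left(\|\bfeps^{\rm dev}(I_h\bm{v})\|_\Omega^2+\|I_h\bm{v}\|_{L_2(\Gamma\suprm{D})}^2\right).
\end{equation*}
The strain term is handled elementwise. Since $|\bfeps^{\rm dev}(\bm{w})|\leq|\nabla\bm{w}|$ pointwise, we get
\begin{equation*}
\|\bfeps^{\rm dev}(I_h\bm{v})\|_T\leq\|\bfeps^{\rm dev}(\bm{v})\|_T+\|\nabla(\bm{v}-I_h\bm{v})\|_T .
\end{equation*}
Summing over $T$ and using Lemma~\ref{lem:oswald-averaging} again bounds this term by $C\,\Xi(\bm{v})^2$.

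The boundary term needs the most care, because it must be controlled by the $\Gamma\suprm{D}$ face penalties in $\Xi$. On a face $F\subset\Gamma\suprm{D}$ with neighbouring element $T$, write
\begin{equation*}
I_h\bm{v}=\bm{v}\vert_T-(\bm{v}\vert_T-I_h\bm{v}).
\end{equation*}
The first part is the jump on a boundary face, so $\|\bm{v}\vert_T\|_F^2=\|\jump{\bm{v}}\|_F^2\leq h_F\cdot h_F^{-1}\|\jump{\bm{v}}\|_F^2$. For the second part, the discrete trace inequality for linear functions gives
\begin{equation*}
\|\bm{v}-I_h\bm{v}\|_F^2\lesssim h_T^{-1}\|\bm{v}-I_h\bm{v}\|_T^2 .
\end{equation*}
By the first part of \eqref{eq:oswald-estimate}, which controls $h_T^{-2}\|\bm{v}-I_h\bm{v}\|_T^2$, this is at most $h_T\,\Xi(\bm{v})^2$.

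Since $h\leq\operatorname{diam}\Omega$, both parts are bounded by a constant times $\Xi(\bm{v})^2$, with the constant depending on $\Omega$. This closes the estimate.

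One variant I would watch for: the vertex averaging near $\Gamma\suprm{D}$ could instead be modified to set boundary vertex values to zero. I would avoid that and keep the plain average with the boundary term as above, since Lemma~\ref{lem:continuous-trace-free-korn} already accepts a nonzero trace.

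The step I expect to be delicate is checking that Lemma~\ref{lem:oswald-averaging} applies as stated. It controls only interior jumps, which is all we use, and it requires face-connected vertex patches, which the theorem assumes. The boundary-kernel assumption enters only through Lemma~\ref{lem:continuous-trace-free-korn}, so no compactness argument over discrete spaces is needed, and the constant is uniform in $h$.
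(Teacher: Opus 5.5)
Your proof is correct, and it is shorter than the paper's. The paper does not average $\bm v$ itself. It first replaces $\bm v$ on each element by the similarity field $\bm w\vert_T=\bar{\bm v}_T+\mathsf{S}_T(x-x_T)$, where $\mathsf{S}_T={\rm skew}\,\nabla\bm v\vert_T+\tfrac13({\rm tr}\,\nabla\bm v\vert_T)\mathsf{I}$, so that $\nabla(\bm v-\bm w)\vert_T=\bfeps^{\rm dev}(\bm v)\vert_T$ holds exactly. It then bounds $\bm v-\bm w$ on elements and faces by Poincar\'e and trace inequalities, controls the jumps of $\bm w$ through \eqref{eq:similarity-jump-bound}, and averages $\bm w$. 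As a result $\bfeps^{\rm dev}(I_h\bm w)=\bfeps^{\rm dev}_h(I_h\bm w-\bm w)$ is a pure nonconformity term, and the boundary estimate \eqref{eq:averaged-boundary-control} carries an extra term $\|\bm w-\bm v\|_F$.

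You apply Lemma~\ref{lem:oswald-averaging} to $\bm v$ directly. This is legitimate because the lemma holds for any piecewise linear field and $\Xi(\bm v)$ contains the full jumps of $\bm v$. The leftover $\bfeps^{\rm dev}(\bm v)$ in $\bfeps^{\rm dev}(I_h\bm v)$ is already a term of $\Xi(\bm v)$, so the similarity split is simply not needed.

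What the paper's detour buys is structure rather than strength. Projecting onto the elementwise kernel before averaging is the step that generalises to piecewise $H^1$ fields. Those cannot be averaged directly and need a local bound $\|\nabla(\bm v-\bm w)\|_T\lesssim\|\bfeps^{\rm dev}(\bm v)\|_T$; that general framework is what the appendix specialises. For the piecewise linear statement with full jumps, your route is the more economical one.

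One point of wording in your boundary step: do not bound each face contribution by $h_T\,\Xi(\bm v)^2$, since the number of faces on $\Gamma\suprm{D}$ is unbounded as $h\to 0$. Instead keep the local quantity $h_T\cdot h_T^{-2}\|\bm v-I_h\bm v\|_T^2$ and sum it over the elements adjacent to $\Gamma\suprm{D}$. Only then invoke \eqref{eq:oswald-estimate}, which gives $\max_T h_T\,\Xi(\bm v)^2\lesssim{\rm diam}(\Omega)\,\Xi(\bm v)^2$.
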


\begin{proof}
\emph{Step 1: elementwise similarity split.} On each $T$ the gradient
$\mathsf{G}_T := \nabla\bm{v}\vert_T$ is a constant matrix. Put
$\mathsf{S}_T := {\rm skew}\,\mathsf{G}_T
+\tfrac13({\rm tr}\,\mathsf{G}_T)\mathsf{I}$ and define the piecewise
linear field $\bm{w}$ by
$\bm{w}\vert_T(x) := \bar{\bm{v}}_T+\mathsf{S}_T(x-x_T)$, with $x_T$ the
centroid of $T$ and $\bar{\bm{v}}_T$ the mean of $\bm{v}$ over $T$. Each
$\bm{w}\vert_T$ is a similarity field, so
$\bfeps^{\rm dev}(\bm{w}) = \bm{0}$ elementwise, and
\begin{equation}
\nabla(\bm{v}-\bm{w})\vert_T
= \mathsf{G}_T-\mathsf{S}_T
= {\rm sym}\,\mathsf{G}_T-\tfrac13({\rm tr}\,\mathsf{G}_T)\mathsf{I}
= \bfeps^{\rm dev}(\bm{v})\vert_T
\label{eq:similarity-split}
\end{equation}
an exact identity, special to piecewise linears. Since
$\bm{v}-\bm{w}$ has zero mean on $T$, the Poincar\'e inequality gives
$\|\bm{v}-\bm{w}\|_T\lesssim h_T\|\bfeps^{\rm dev}(\bm{v})\|_T$, and the
discrete trace inequality on shape-regular elements then
$\|\bm{v}-\bm{w}\|_F^2\lesssim h_F^{-1}\|\bm{v}-\bm{w}\|_T^2
\lesssim h_F\|\bfeps^{\rm dev}(\bm{v})\|_T^2$ for every face
$F\subset\partial T$. Consequently, on every face,
\begin{equation}\label{eq:similarity-jump-bound}
h_F^{-1}\|\jump{\bm{w}}\|_F^2
\lesssim h_F^{-1}\|\jump{\bm{v}}\|_F^2
+\|\bfeps^{\rm dev}(\bm{v})\|_{T^+\cup T^-}^2
\end{equation}

\emph{Step 2: averaging.} Let $I_h\bm{w}$ be the vertex averaged field of
Lemma~\ref{lem:oswald-averaging}. The right-hand side of \eqref{eq:oswald-estimate} is
$\lesssim\Xi(\bm{v})^2$ by \eqref{eq:similarity-jump-bound}.

\emph{Step 3: the continuous inequality.} Lemma~\ref{lem:continuous-trace-free-korn} applied
to $I_h\bm{w}\in[H^1(\Omega)]^3$ gives
$\|\nabla I_h\bm{w}\|_\Omega\lesssim
\|\bfeps^{\rm dev}(I_h\bm{w})\|_\Omega
+\|I_h\bm{w}\|_{\Gamma\suprm{D}}$. For the first term,
$\bfeps^{\rm dev}(\bm{w}) = \bm{0}$ elementwise, so
$\|\bfeps^{\rm dev}(I_h\bm{w})\|_\Omega
= \|\bfeps^{\rm dev}_h(I_h\bm{w}-\bm{w})\|_\Omega
\leq\|\nabla_h(I_h\bm{w}-\bm{w})\|_\Omega\lesssim\Xi(\bm{v})$, by
\eqref{eq:oswald-estimate}. For the second, on each face $F\subset\Gamma\suprm{D}$,
\begin{equation}
\|I_h\bm{w}\|_F\leq
\|I_h\bm{w}-\bm{w}\|_F+\|\bm{w}-\bm{v}\|_F+\|\bm{v}\|_F
\label{eq:averaged-boundary-control}
\end{equation}
each square is $h_F$ times a quantity that sums to
$\lesssim\Xi(\bm{v})^2$: the first through the trace inequality and
\eqref{eq:oswald-estimate}, the second by Step 1, and the third because
$\|\bm{v}\|_F = \|\jump{\bm{v}}\|_F$ there; since
$h_F\lesssim{\rm diam}\,\Omega$, the sum over $\Gamma\suprm{D}$ is
$\lesssim\Xi(\bm{v})^2$.

\emph{Step 4: assembly.} By Steps 1--3,
\begin{equation}
\|\nabla_h\bm{v}\|_\Omega\leq
\|\nabla_h(\bm{v}-\bm{w})\|_\Omega
+\|\nabla_h(\bm{w}-I_h\bm{w})\|_\Omega
+\|\nabla I_h\bm{w}\|_\Omega
\lesssim \Xi(\bm{v})
\label{eq:broken-korn-assembly}
\end{equation}
\end{proof}

\begin{cor}\label{cor:three-dimensional-coercivity}
For $n=3$ and every $\gamma_1>0$ the form \eqref{eq:cr-stabilized-form} satisfies
\eqref{eq:cr-coercivity} on $\vec{V}^h$, with a constant proportional to $\mu$
and otherwise depending only on the
shape regularity, on $\Omega$, on $\Gamma\suprm{D}$ and on $\gamma_1$.
\end{cor}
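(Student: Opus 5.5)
The corollary follows by combining Theorem~\ref{thm:broken-trace-free-korn} with the structure of the elementwise term of \eqref{eq:cr-stabilized-form}. For $n=3$ the elementwise part of $a_h(\bm{v},\bm{v})$ is, by \eqref{eq:deviatoric-cauchy-stress} with the factor $\tfrac13$, exactly $2\mu\sum_T\|\bfeps^{\rm dev}(\bm{v})\|_T^2$. The reason is that $\bm{\sigma}(\bm{v},0)=2\mu\bfeps^{\rm dev}(\bm{v})$ and $\bfeps^{\rm dev}$ is orthogonal to $\bfI$, so that $\bm{\sigma}(\bm{v},0):\bfeps(\bm{v})=2\mu\vert\bfeps^{\rm dev}(\bm{v})\vert^2$ pointwise. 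The penalty part is $2\mu\gamma_1\sum_F h_F^{-1}\|\jump{\bm{v}}\|_F^2$. On the end faces the paper penalises only the tangential jump, and those faces do not enter $\Xi$, so dropping them only weakens the lower bound. I would therefore first record
\begin{equation*}
a_h(\bm{v},\bm{v})\ \geq\ 2\mu\min\{1,\gamma_1\}\,\Xi(\bm{v})^2 ,
\end{equation*}
where $\Xi$ is taken over the interior and $\Gamma\suprm{D}$ faces as in \eqref{eq:broken-trace-free-energy}.

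The second step bounds the norm $\|\bm{v}\|_h$ of \eqref{eq:cr-coercivity} by $\Xi(\bm{v})$. Its jump part is already contained in $\Xi(\bm{v})^2$. For the strain part I would use $\vert\bfeps(\bm{v})\vert\leq\vert\nabla\bm{v}\vert$ elementwise, so that $\sum_T\|\bfeps(\bm{v})\|_T^2\leq\|\nabla_h\bm{v}\|_\Omega^2$. Theorem~\ref{thm:broken-trace-free-korn} then gives $\|\nabla_h\bm{v}\|_\Omega^2\leq C\,\Xi(\bm{v})^2$, which applies because every member of $\vec{V}^h$ is piecewise linear. Together these yield $\|\bm{v}\|_h^2\leq(1+C)\Xi(\bm{v})^2$.

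Combining the two steps gives \eqref{eq:cr-coercivity} with
\begin{equation*}
c(\gamma_1)=\frac{2\mu\min\{1,\gamma_1\}}{1+C}.
\end{equation*}
This constant is proportional to $\mu$ and depends otherwise only on $\gamma_1$ and on the constant $C$ of the theorem, that is, on shape regularity, $\Omega$ and $\Gamma\suprm{D}$. It is positive for every $\gamma_1>0$ and degenerates as $\gamma_1\to0$, as stated.

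The only point needing care is bookkeeping: the face set in \eqref{eq:cr-coercivity} and in $a_h$ must contain the face set in $\Xi$. The essential ingredient is the presence of the $\Gamma\suprm{D}$ faces with full trace, which supply the boundary control in Step~3 of the theorem. Once that is checked, the rest is immediate.
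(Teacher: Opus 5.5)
Your proof follows the paper's: the pointwise identity $\bm{\sigma}(\bm{v},0):\bfeps(\bm{v})=2\mu\vert\bfeps^{\rm dev}(\bm{v})\vert^2$, Theorem~\ref{thm:broken-trace-free-korn}, and $\vert\bfeps(\bm v)\vert\leq\vert\nabla\bm v\vert$, giving the same constant $2\mu\min(1,\gamma_1)/(1+C)$. However, one step fails as written when $\Gamma\suprm{E}\neq\emptyset$. The face sum in the norm $\|\bm v\|_h$ of \eqref{eq:cr-coercivity} is the face sum of \eqref{eq:cr-stabilized-form}. It therefore contains the terms $h_F^{-1}\|(\bfI-\bfn\otimes\bfn)\bm v\|_F^2$ for $F\subset\Gamma\suprm{E}$. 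These terms are not part of $\Xi(\bm v)^2$, so your claim that the jump part of $\|\bm v\|_h^2$ is already contained in $\Xi(\bm v)^2$ is false. You discarded exactly these terms from the lower bound for $a_h$, so nothing in your chain controls them. Your closing remark gives the containment needed for the lower bound on $a_h$. The upper bound on $\|\bm v\|_h$ needs the reverse containment, which does not hold. When $\Gamma\suprm{E}=\emptyset$, as in the three-dimensional computation, your argument is complete as it stands.

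The paper's repair is not to discard the end faces. Let $\Xi\subrm{E}(\bm v)^2$ denote the sum of those tangential terms. Then $a_h(\bm v,\bm v)\geq 2\mu\min(1,\gamma_1)\bigl(\Xi(\bm v)^2+\Xi\subrm{E}(\bm v)^2\bigr)$ and $\|\bm v\|_h^2\leq (C+1)\bigl(\Xi(\bm v)^2+\Xi\subrm{E}(\bm v)^2\bigr)$, which give the same constant. Alternatively, you could keep your lower bound and prove $\Xi\subrm{E}(\bm v)^2\lesssim\|\nabla_h\bm v\|_\Omega^2$ directly. On a flat face $F\subset\Gamma\suprm{E}$ of $T$, the field $(\bfI-\bfn\otimes\bfn)\bm v$ is linear with zero face mean by \eqref{eq:cr-velocity-space}. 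A face Poincar\'e inequality and shape regularity then give $h_F^{-1}\|(\bfI-\bfn\otimes\bfn)\bm v\|_F^2\lesssim\|\nabla\bm v\|_T^2$, after which Theorem~\ref{thm:broken-trace-free-korn} applies. This route costs an extra shape-regularity constant but needs no new idea.
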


\begin{proof}
Elementwise
$(\bm{\sigma}(\bm{v},0),\bfeps(\bm{v}))_T
= 2\mu\|\bfeps^{\rm dev}(\bm{v})\|_T^2$, so
$a_h(\bm{v},\bm{v})\geq 2\mu\min(1,\gamma_1)\,(\Xi(\bm{v})^2+\Xi\subrm{E}(\bm{v})^2)$,
with $\Xi\subrm{E}(\bm{v})^2$ the sum of
$h_F^{-1}\|(\bfI-\bfn\otimes\bfn)\bm{v}\|_F^2$ over the faces on
$\Gamma\suprm{E}$, while
$\|\bm{v}\|_h^2\leq\|\nabla_h\bm{v}\|_\Omega^2
+\sum_Fh_F^{-1}\|\jump{\bm{v}}\|_F^2\leq(C+1)\,(\Xi(\bm{v})^2+\Xi\subrm{E}(\bm{v})^2)$ by
Theorem~\ref{thm:broken-trace-free-korn} and
$\|\bfeps(\bm{v})\|_T\leq\|\nabla\bm{v}\|_T$.
\end{proof}

\begin{rem}
The constant in \eqref{eq:broken-trace-free-korn} is finite but, for the
boundary configurations of this paper, not small. Computing the largest
eigenvalue of $\|\nabla_h\cdot\|_\Omega^2$ relative to $\Xi(\cdot)^2$ over
all piecewise linear fields on the box of
Section~\ref{sec:numerical-results}, with $\Gamma\suprm{D}$ taken as the two
walls and the inflow, gave the lower bounds $6.4$, $11.1$, $16.1$ and
$20.7$ on the meshes of Table~\ref{tab:stokes-three-dimensional}. Over this
range they grow roughly in proportion to $1/h$, so the computation does not
exhibit the uniform bound. The maximisers concentrate in a boundary layer
against the free edges where the outflow face meets the free faces, and the
share of their gradient energy within two cells of those edges falls from
$0.81$ to $0.65$ between the $18\times6\times6$ and $36\times12\times12$
meshes, as for a layer of fixed width that the mesh resolves progressively.
A conforming piecewise linear computation of the constant in
\eqref{eq:continuous-trace-free-korn} gives $6.8$, $10.9$, $14.4$ and
$17.2$ on the same meshes, while the corresponding $H^1_0$ quotient, whose
exact value is $2$ by a Fourier argument, is approached from below, $1.989$,
$1.998$ and $1.999$ on the three coarser meshes. The coercivity of \eqref{eq:cr-stabilized-form},
though uniform in $h$, is thus weak on nearly conformal fields concentrated
at the free boundary; the computations of
Section~\ref{sec:numerical-results} show no trace of such fields.
\end{rem}

\begin{rem}
The restriction to piecewise linears is what makes
Step 1 an identity; for piecewise $H^1$ fields the elementwise kernel
includes the quadratic members of \eqref{eq:conformal-killing-fields} and
the exact similarity split is no longer available. The corresponding
piecewise $H^1$ and $H^2$ theory, with projected jump controls, is given
in \cite{WiHo24}.
The restriction to $n=3$ is
essential, not technical: in two
dimensions the pointwise kernel of the plane deviatoric
$\bfeps-\tfrac12(\nabla\cdot\bm{u})\bfI$ is infinite-dimensional, so no
inequality of the type \eqref{eq:broken-trace-free-korn} can hold for it; the two-dimensional
form of this paper keeps the three-dimensional factor $\tfrac13$, is
bounded below elementwise through \eqref{eq:stokes-deviatoric-coercivity}, and is covered by
\cite{Br04} directly, as noted in Section~\ref{sec:stokes-cavitation}. Alternatively,
the four steps above reprove that case for piecewise linears verbatim,
with $\bfeps$ in place of $\bfeps^{\rm dev}$ and the rigid motions, the
linear kernel of $\bfeps$, in place of the similarity fields, a rigid
motion vanishing on a boundary segment of positive length being zero.
\end{rem}

\paragraph{Funding.}
Peter Hansbo was supported by the Swedish Research Council under grant
2022-03908. Mats G. Larson was supported in part by the Swedish Research
Council under grants 2021-04925 and 2025-05562, the Knut and Alice Wallenberg
Foundation under grant KAW 2025.0277, and the Swedish Research Programme
Essence. The funders had no role in the design of the study; in the collection,
analysis, or interpretation of data; in the writing of the manuscript; or in
the decision to submit the manuscript for publication.

\paragraph{CRediT Authorship Contribution Statement.}
Peter Hansbo: Conceptualization, Methodology, Software, Validation, Formal
analysis, Investigation, Writing--original draft, Writing--review and editing,
Visualization, Funding acquisition. Mats G. Larson: Conceptualization,
Methodology, Software, Validation, Formal analysis, Investigation,
Writing--original draft, Writing--review and editing, Visualization, Funding
acquisition.

\paragraph{Declaration of Competing Interest.}
The authors declare that they have no known competing financial interests or
personal relationships that could have appeared to influence the work
reported in this paper.

\paragraph{Data and Code Availability.}
No external datasets were used in this study. The MATLAB source code,
reproduction drivers, and reference numerical outputs required to reproduce
the reported tables and figures are publicly available at
\url{https://github.com/mglarson1/cavitation-fem} under the BSD 3-Clause
licence. The computations reported here use version 0.3.0, commit
\texttt{ebc9521}, a copy of which is also provided as supplementary material
to the journal.

\paragraph{Use of AI Tools.}
During the preparation and revision of this manuscript, the authors used
OpenAI's GPT-5.6 Sol through ChatGPT and Codex, as well as Claude Fable 5 and
Claude Opus 5, to improve the language and readability, assist with literature
searches, and refine the manuscript. After using these tools, the authors
reviewed and edited the content as needed and take full responsibility for the
content of the published article.

\enlargethispage{\baselineskip}
\paragraph{Authors' Addresses.}\mbox{}\par
\begingroup
\fontsize{9}{10.5}\selectfont
\noindent
Peter Hansbo, Department of Mechanical Engineering, J\"onk\"oping University,
SE-55111 J\"onk\"oping, Sweden,\\
\hspace*{1em}Corresponding author: \texttt{peter.hansbo@ju.se}.\par
\noindent
Mats G. Larson, Department of Mathematics and Mathematical Statistics,
Ume{\aa} University, SE-901 87 Ume{\aa}, Sweden,\\
\hspace*{1em}\texttt{mats.larson@umu.se}.
\endgroup

\bibliographystyle{abbrv}
\bibliography{references}

\end{document}